\documentclass[11pt]{amsart}
\usepackage{amsmath,amsthm,amssymb,amsfonts}
\usepackage[margin=1in]{geometry}
\usepackage{mathrsfs}
\usepackage{tikz-cd}
\usepackage{pdflscape}
\usepackage{makecell}
\usepackage{longtable}
\usepackage{xcolor}
\usepackage{multirow}
\usepackage{hyperref}
\usepackage{cleveref}

\hypersetup{
    colorlinks=true,
    linkcolor=red,     
    citecolor=blue,     
    urlcolor=blue       
}

\newtheorem{theorem}{Theorem}[section]
\newtheorem{lemma}[theorem]{Lemma}
\newtheorem{proposition}[theorem]{Proposition}
\newtheorem{corollary}[theorem]{Corollary}
\newtheorem{conjecture}[theorem]{Conjecture}

\newtheorem{thmA}{Theorem}

\newtheorem{corA}[thmA]{Corollary}

\theoremstyle{definition}
\newtheorem{definition}[theorem]{Definition}

\theoremstyle{definition}
\newtheorem{remark}[theorem]{Remark}

\newcommand{\Z}{\mathbb{Z}}

\newcommand{\Q}{\mathbb{Q}}
\newcommand{\R}{\mathbb{R}}

\newcommand{\PP}{\mathbb{P}}
\newcommand{\OO}{\mathcal{O}}

\newcommand{\X}{\underline{X}}
\newcommand{\cX}{\mathcal{X}}
\newcommand{\Nef}{\operatorname{Nef}}

\title[Del Pezzo Orbifolds]{Campana rational connectedness and Weak approximation of del Pezzo orbifolds}
\author{Saptarshi Dandapat}
\address{Graduate School of Mathematics, Nagoya University, Furocho Chikusa-ku, Nagoya, 464-8601, Japan}
\email{saptarshi.dandapat.d2@math.nagoya-u.ac.jp}
\date{\today}

\begin{document}

\begin{abstract}
    We prove that a del Pezzo orbifold of degree less than 7, and del Pezzo orbifolds of higher degree with irreducible boundary are strongly Campana 
    uniruled, and deduce that it is Campana rationally connected. This provides important non-toric examples in dimension two of a conjecture by Campana, 
    in the form required by the weak approximation theorem, and yields weak approximation for Campana sections of Campana fibrations at all places 
    whose general fibre is such an orbifold. In three appendices we classify the del Pezzo orbifolds of degree $6$, $7$ and $8$ 
    with irreducible boundary.
\end{abstract}

\maketitle 

\tableofcontents

\section{Introduction}

\subsection{Campana points and sections of fibrations}

Let $k$ be an algebraically closed field of characteristic $0$ and let $B$ be a smooth projective curve over $k$ with function field $K = k(B)$. 
By the valuative criterion of properness, $K$-rational points of a variety defined over $K$ are identified with the sections of a proper model over $B$. 
When the generic fibre is rationally connected such a section always exists (\cite[Theorem~1.1]{graber2003families}), and they satisfy 
weak approximation at the places of good reduction \cite[Theorem~3]{hassett2005}.  

In arithmetic algebraic geometry a recent point of interest has been \emph{Campana points}, which interpolate between rational points and integral points
and are described by the geometry of a Campana orbifold $(X, \Delta_{\epsilon})$ in the sense of Campana \cite{campana2004orbifolds,Campana2011,Campana2011Survey}.
Many conjectures for rational points and integral points have been extended and studied for Campana points (see \cite{abramovich2018campana,pieropan2021campana,mitankin2024semi}).
Over the function field $K$ the object corresponding to a Campana point is a \emph{Campana section} of a Campana fibration 
$\pi : (\cX,\Delta_\epsilon) \to B$, that is, a section whose local intersection multiplicities with the boundary satisfy the Campana condition.

Chen, Lehmann and Tanimoto \cite{chen2024campana} developed the deformation theory of Campana curves and Campana sections within logarithmic geometry,
using moduli of stable log maps, and proved a version of \cite{hassett2005} in this setting: if the general fibre of $\pi$ is rationally connected 
\emph{and strongly Campana uniruled}, then weak approximation for Campana sections holds at the places of good reduction 
\cite[Theorem~1.6]{chen2024campana}. The theorem becomes useful only when its hypothesis on the general fibre can be checked in concrete families. 
Here we check it for del Pezzo orbifolds.

\subsection{Campana's conjecture on Campana rational connectedness.}\label{ss:conjectures}

Throughout, a \emph{klt Campana orbifold} is a pair $(X, \Delta_{\epsilon})$ with underlying smooth projective variety $\X$, 
a strict normal crossings divisor $\Delta = \bigcup_i \Delta_i$ on $\X$, and weights $\epsilon_i = 1 - 1/m_i$ with $m_i \in \Z_{\geq 1}$, 
giving the $\Q$-divisor $\Delta_{\epsilon} = \sum_i \epsilon_i \Delta_i$. It is called a \emph{klt Fano orbifold} if $-(K_{\X} + \Delta_{\epsilon})$ is ample. 
A rational Campana curve is a genus zero stable log map to the log scheme attached to $(X,\Delta_{\epsilon})$ whose contact order along $\Delta_i$ 
at each marking is either $0$ or at least $m_i$, for precise definitions see Section \ref{sec:prelim}. 

In \cite[Section~5.4]{Campana2011} Campana formulated several conjectures relating Campana uniruledness to the positivity of the orbifold tangent bundle. 
We are interested in a special case of \cite[Conjecture~9.10]{Campana2011Survey}, which is the main source of examples of Campana rational connectedness.

\begin{conjecture}[Campana; {\cite[Conjecture~5.8]{chen2024campana}}]
    \label{conj:campana}
    Let $k$ be of characteristic $0$ and let $(X,\Delta_{\epsilon})$ be a klt Fano orbifold. Then $(X,\Delta_{\epsilon})$ is Campana rationally connected.
\end{conjecture}

\begin{conjecture}[{\cite[Conjecture~5.10]{chen2024campana}}]
    \label{conj:CLT}
    Let $k$ be of characteristic $0$. Then every klt Fano orbifold $(X,\Delta_{\epsilon})$ is strongly Campana uniruled, i.e.\ carries a free Campana
    curve $f : C \to X$ whose class $f_*[C]$ lies in the interior of the nef cone of curves $\Nef_1(X)$.
\end{conjecture}

The cone $\Nef_1(X)$ is dual to the pseudo-effective cone of divisors, so $f_*[C]$ lie in its interior implies:
$$f_*[C] \cdot D > 0 \qquad \text{for every non-zero effective divisor } D.$$ 
In particular the curve meets every boundary component $\Delta_i$, and after splitting contact orders we obtain, for each $i$, a free Campana curve
through a general point of the codimension one stratum of $\Delta_i$ \cite[Propositions~2.11 and 5.11]{chen2024campana}. When $\X$ is rationally 
connected, Conjecture \ref{conj:CLT} implies Conjecture \ref{conj:campana} \cite[Corollary~6.7]{chen2024campana}.

Conjecture~\ref{conj:CLT} is known in the following cases: when $\operatorname{dim} X = 1$, by an analysis of the finite subgroups of
$\mathrm{PGL}_2$ and the associated Bely\u{\i} maps \cite[Theorem~7.1]{chen2024campana}; and when $X$ is a smooth projective toric variety 
and $\Delta$ is the torus-invariant boundary \cite[Theorem~1.8]{chen2024campana}. Campana's cyclic cover construction \cite[Example~1.5]{chen2024campana} 
produces many klt Fano orbifolds that are Campana rationally connected. 
In this paper, we prove the Conjecture~\ref{conj:CLT} and Conjecture~\ref{conj:campana} for del Pezzo orbifolds. 

\subsection{Weak approximation for Campana sections of del Pezzo fibrations}\label{ss:introweak}

Let $k$ be an algebraically closed field of characteristic $0$, $B$ is a smooth projective curve over $k$ and $K = k(B)$. For a closed point $b \in B$
we write $\widehat{\OO_b}$ for the completion of the local ring $\OO_{B,b}$ and $K_b = \operatorname{Frac}(\widehat{\OO_b})$ for the completion of $K$
at the corresponding discrete valuation $v_b$, we refer to closed points of $B$ as \emph{places} of $K$. For a smooth geometrically integral $K$-variety
$X$ the set $X(K_b)$ carries a natural $v_b$-adic topology. 

\begin{definition}
    \label{def:weak-approximation}
    Let $X$ be a smooth geometrically integral $K$-variety and let $V$ be a finite set of places of $K$. We say that $X$ satisfies 
    \emph{weak approximation at $V$} if the image of the diagonal map $$X(K) \to \prod\limits_{b \in V} X(K_b)$$ is dense in the product topology.
    We say that $X$ satisfies \emph{weak approximation} if it satisfies weak approximation at $V$ for every finite set $V$ of places of $K$.
\end{definition}

Let $\pi: \cX \to B$ be a proper model of $X$, that is, a proper flat morphism from an integral scheme with generic fiber $X$. By valuative criterion
of properness, $X(K)$ is the set of sections of $\pi$ and $X(K_b)$ is the set of sections of 
$\cX \times_B \operatorname{Spec}(\widehat{\OO_b}) \to \operatorname{Spec}(\widehat{\OO_b})$. Truncating a section at order $n$ produces an admissible $n$-jet 
of $\pi$ at a point of the fibre $\cX_b$, and conversely every admissible jet arises in this way. (See \cite[Section~6]{chen2024campana} for an Introduction 
to weak approximation for Campana sections.) For del Pezzo surfaces of degree at least $4$ over the function field of a curve, weak approximation is known
at \emph{every} place by a theorem of Colliot-Th\'el\`ene and Gille \cite{colliot2004remarques}. Combining this with our results we obtain our main 
arithmetic application towards weak approximation for Campana sections at \emph{every} place.  

\subsection{Main results}

\begin{definition}
    \label{def:dP-orbifold}
    A \emph{del Pezzo orbifold} is a klt Campana orbifold $(X,\Delta_{\epsilon})$ such that $\X$ is a smooth del Pezzo surface and 
    $-(K_{\X} + \Delta_{\epsilon})$ is ample. Its \emph{degree} is $d = K_{\X}^2$.
\end{definition}

\begin{thmA}
    \label{thmA}
    Let $k$ be algebraically closed of characteristic $0$ and let $(X, \Delta_{\epsilon})$ be a del Pezzo orbifold. 
    If $d \leq 6$ or $\X = \PP^1 \times \PP^1$, such that $-(K_{\X} + \Delta_{\epsilon})$ is ample then $(X, \Delta_{\epsilon})$ is strongly Campana uniruled.
    When $\X$ is $\PP^2$, blow-up of $\PP^2$ at one, or two points, and $\Delta$ is irreducible, such that $-(K_{\X} + \Delta_{\epsilon})$ is ample 
    then $(X, \Delta_{\epsilon})$ is strongly Campana 
    uniruled, that is, Conjecture \ref{conj:CLT} holds for $(X,\Delta_{\epsilon})$.
\end{thmA}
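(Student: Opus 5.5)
Throughout, write $\epsilon_i = 1-1/m_i$, and let $C$ denote the class that must be shown to lie in the interior of $\Nef_1(X)$. The plan is to exhibit explicit families of rational Campana curves on $\X$, built from conic bundles and from members of $|-K_{\X}|$, and check two things for each: freeness as a log map, and that the class lies in the interior of $\Nef_1(X)$. The freeness criterion I will use (from \cite{chen2024campana}) is in terms of the orbifold normal bundle. For a rational curve $f:\PP^1\to \X$ that is an immersion, the log tangent sheaf twisted by contact data is $T_{\X}(-\log\Delta)$ plus corrections at the markings. A sufficient condition is
$$-(K_{\X}+\Delta)\cdot C + \#\{\text{contact points}\} \geq 2 .$$
More precisely, this is the degree count on $N_f$ for the log normal bundle, which on $\PP^1$ for an immersed curve is a line bundle, so it suffices that its degree be $\geq 0$. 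Ampleness of $-(K_{\X}+\Delta_\epsilon)$ will only enter through the constraints it imposes on the $\Delta_i$ and $m_i$.

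\textbf{Case $d\le 6$.} First observe that $\Nef_1(X)$ is the dual of the effective cone, which is generated by $(-1)$-curves, together with conic classes when $d\geq 7$. A class lies in the interior of $\Nef_1(X)$ iff it meets every $(-1)$-curve positively; $-K_{\X}$ is such a class. So I aim to produce a free rational Campana curve in the class $a(-K_{\X})$, or in some class $\sum$(conic classes) that meets all $(-1)$-curves positively. The construction is as follows.
\begin{enumerate}
\item Take a sum of conic classes $F_1+F_2$ (two distinct conic fibrations). This class is nef and big, and for $d\le 6$ it can be chosen to meet every $(-1)$-curve positively.
\item Use the fact that the linear system $|F_1+F_2|$ is large (dimension $\geq 3$ for $d\le 6$) to find rational curves in it with prescribed high-order tangency to each $\Delta_i$ at a single point, with multiplicity $\ge m_i$. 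Imposing tangency of order $m$ at a point costs $m-1$ conditions. The ampleness of $-(K_{\X}+\Delta_\epsilon)$ bounds $\sum \epsilon_i \Delta_i\cdot C < -K_{\X}\cdot C$, and this is exactly what keeps the total number of conditions below the dimension of the family of rational curves, which is $-K_{\X}\cdot C - 1$.
\item If the linear system is too small, instead take a multiple $nC$ and use a degree-$n$ cover of $\PP^1$ (Belyĭ-type, as in the dimension-one result \cite[Theorem~7.1]{chen2024campana}) composed with the curve. The contact orders then multiply by ramification indices, which can be made divisible by $m_i$.
\end{enumerate}
Freeness follows from the dimension count: the family of such Campana curves has the expected dimension $-(K_{\X}+\Delta)\cdot C+\#\text{markings}-1 \geq 1$, and its curves dominate $\X$, so a general member is free.

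\textbf{Case $\PP^1\times\PP^1$.} This is easier. Curves of bidegree $(1,1)$ or $(a,b)$ with $a,b>0$ lie in the interior of $\Nef_1$. The $\Delta_i$ have bidegrees $(p_i,q_i)$, and ampleness gives $\sum\epsilon_i p_i<2$ and $\sum \epsilon_i q_i<2$. From this I would build a Campana curve as the graph of a rational map $\PP^1\to\PP^1$ with prescribed ramification. This again reduces to the orbifold curve results on each factor, via the product of two $1$-dimensional Campana orbifold covers. Here the constraint $\sum\epsilon_i\cdot(\text{degree})<2$ forces each factor's induced orbifold to be Fano in dimension one.

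\textbf{Irreducible boundary on $\PP^2$, $\mathrm{Bl}_1$, $\mathrm{Bl}_2$.} Here $\Delta$ is a single smooth curve of low degree, classified in the appendices; ampleness forces e.g.\ $\deg\Delta\le 5$ on $\PP^2$ with $m$ bounded. For each case, take lines or conics (respectively strict transforms of lines through the blown-up points, and the corresponding fibration classes) that are tangent to $\Delta$ to order $m$ at one point. Such curves exist, for example via flex or hyperflex lines, or via osculating conics at a general point of $\Delta$. Then take a sum of two such classes whose total is interior to $\Nef_1$, glue them into a comb, and smooth the comb by \cite[Proposition~2.11]{chen2024campana}-type log smoothing.

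I expect the main obstacle to be the existence and freeness of curves with high contact order when $m$ is large relative to $\Delta\cdot C$. Ordinary tangency conditions then exceed the available dimension, and one must use the multiple-cover trick while checking that the resulting log map remains free rather than merely dominant. I would first try to handle this uniformly by pulling back along a cyclic cover branched along $\Delta$, à la Campana's construction \cite[Example~1.5]{chen2024campana}.
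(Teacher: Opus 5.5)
\textbf{There is a genuine gap in the $d\le 6$ case.} It lies in the core existence step.

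Step 2 asserts that there are rational curves in $|F_1+F_2|$ meeting each $\Delta_i$ only with contact $\ge m_i$. Note that the Campana condition constrains every point of $C\cap\Delta_i$, so tangency at a single point suffices only if all of $\Delta_i\cdot C$ is concentrated there. Your justification is a parameter count, which only bounds the dimension of non-empty components from below and does not produce a curve. Even heuristically, a moving family needs $-(K_{\X}+\Delta_\epsilon)\cdot C\ge 2$, whereas ampleness gives only $>0$.

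Step 3 cannot repair this. Composing a curve $h$ with a cover $g$ of $\PP^1$ requires ramification $\ge\lceil m_{i(k)}/\mu_k\rceil$ over every boundary point of $h$. By Riemann--Hurwitz (Lemma~\ref{lem:covers}) this is possible only if the induced orbifold $\PP^1$ is spherical. For a transverse curve of class $F_1+F_2$ that means $\Delta_\epsilon\cdot(F_1+F_2)<2$, but ampleness only gives $<-K_{\X}\cdot(F_1+F_2)=4$. Concretely, take a quartic del Pezzo with $\Delta\in|-K_{\X}|$ smooth and $m=2$. The pair is log Fano, yet a rational curve of class $-K_{\X}=F_1+F_2$ transverse to $\Delta$ carries four points of weight $2$, which is a Euclidean orbifold. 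Separately, on $d=6$ the sum of any two of the three conic classes $H-E_i$ is orthogonal to the remaining $E_k$, so $F_1+F_2$ need not be interior.

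\textbf{The missing idea.} Use the class where ampleness gives sphericity exactly. For a conic class $F$ one has $-K_{\X}\cdot F=2$. A general member of $|F|$ is transverse to $\Delta$ and avoids $\operatorname{Sing}(\Delta)$, so its induced orbifold satisfies $\sum_k(1-1/m_{i(k)})=\Delta_\epsilon\cdot F<2$. The dihedral or polyhedral quotient cover $g$ therefore exists. The exact sequence $0\to N_g\to N_f\to g^*N_h\to 0$, with $N_g$ torsion and $\deg N_h=(2-\Delta\cdot F)+\Delta\cdot F-2=0$, shows $f=h\circ g$ is free. This also settles the multiple-cover freeness issue you flag at the end, without Campana's cyclic covers. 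Interiority then comes not from one big class but from gluing such curves from several different conic bundles into a chain at points off $\Delta$ and log-smoothing. You invoke gluing only in your last case.

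\textbf{The other two cases fail for the same reason.}

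On $\PP^1\times\PP^1$, a graph $\Gamma_\phi$ meets a boundary component of bidegree $(p,q)$ with $p,q>0$ at points unrelated to the ramification of $\phi$. So nothing reduces to dimension one unless $\Delta$ consists of fibres. The inequalities $\sum\epsilon_ip_i<2$ and $\sum\epsilon_iq_i<2$ that you wrote are precisely the sphericity of the fibres of the two rulings, and the conic construction applies to those fibres directly.

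For irreducible $\Delta$ on $\PP^2$ and its blow-ups at one or two points, the proposed curves do not work:
\begin{itemize}
\item Flex and hyperflex lines are finitely many, hence not free. For a flex line $L$ of a cubic, $(-K_{\X}-\Delta)\cdot L+n-2=-1$.
\item An osculating conic still meets $\Delta$ transversally in $2t-5$ further points, which violates the Campana condition.
\item Contact $m$ with a line is impossible once $m>t=\Delta\cdot H$.
\end{itemize}
What works for the class $H$ depends on $\epsilon t$. If $\epsilon t<2$, take a general line composed with a cover. If $\epsilon t\ge 2$ (so $3\le t\le 5$), take a general \emph{simply} tangent line. Its induced orbifold has weights $\lceil m/2\rceil,m,\dots,m$ with $(1-1/\lceil m/2\rceil)+(t-2)\epsilon<2$, and its log normal bundle has degree $(3-t)+(t-1)-2=0$. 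Compose it with a cover exactly as before.
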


\begin{thmA}
    \label{thmB}
    Let $k$ be algebraically closed of characteristic $0$ and let $(X, \Delta_{\epsilon})$ be a del Pezzo orbifold. 
    If $d \leq 6$ or $\X = \PP^1 \times \PP^1$, such that $-(K_{\X} + \Delta_{\epsilon})$ is ample then $(X, \Delta_{\epsilon})$ is Campana rationally connected.
    When $\X$ is $\PP^2$, blow-up of $\PP^2$ at one, or two points, and $\Delta$ is irreducible, such that $-(K_{\X} + \Delta_{\epsilon})$ is ample 
    then $(X, \Delta_{\epsilon})$ is Campana rationally 
    connected, that is, Conjecture \ref{conj:campana} holds for $(X,\Delta_{\epsilon})$.
\end{thmA}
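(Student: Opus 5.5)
Theorem~\ref{thmB} follows formally from Theorem~\ref{thmA}, together with the implication recalled in Subsection~\ref{ss:conjectures}: when $\X$ is rationally connected, strong Campana uniruledness implies Campana rational connectedness \cite[Corollary~6.7]{chen2024campana}.

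So I would proceed in two steps. First, observe that in every case listed in Theorem~\ref{thmB} the underlying surface $\X$ is a smooth del Pezzo surface, hence rational and in particular rationally connected over the algebraically closed field $k$ of characteristic $0$. This holds for $\PP^2$, for $\PP^1\times\PP^1$, and for every blow-up of $\PP^2$ in at most $8$ general points. Second, note that the hypotheses of Theorem~\ref{thmB} coincide verbatim with those of Theorem~\ref{thmA}:
\begin{itemize}
\item either $d\le 6$, or $\X=\PP^1\times\PP^1$, with $-(K_{\X}+\Delta_\epsilon)$ ample;
\item or $\X$ is $\PP^2$ or its blow-up at one or two points, with irreducible boundary $\Delta$ and $-(K_{\X}+\Delta_\epsilon)$ ample.
\end{itemize}
Theorem~\ref{thmA} therefore supplies a free Campana curve $f\colon C\to X$ whose class lies in the interior of $\Nef_1(X)$. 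Applying \cite[Corollary~6.7]{chen2024campana} then yields Campana rational connectedness of $(X,\Delta_\epsilon)$.

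The only point needing care is that the orbifold $(X,\Delta_\epsilon)$ meets the standing assumptions of \cite{chen2024campana}: $\Delta$ is strict normal crossings with weights $1-1/m_i$, and the pair is klt. Both are built into the paper's definition of a klt Campana orbifold and of a del Pezzo orbifold (Definition~\ref{def:dP-orbifold}). Consequently there is no real obstacle in this step. All the substantive work, namely constructing free Campana curves with interior nef class in each degree and boundary configuration, is contained in the proof of Theorem~\ref{thmA}.
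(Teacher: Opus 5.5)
Your proposal is correct and is the paper's own argument: Theorem~\ref{thmA} gives strong Campana uniruledness in exactly the listed cases, and $\X$ is rationally connected as a del Pezzo surface. Then \cite[Corollary~6.7]{chen2024campana} (recalled as Corollary~\ref{cor:CRC}) gives Campana rational connectedness, just as in the proof of Theorem~\ref{thm:crc}.
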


Theorem \ref{thmA} is proved as Theorem \ref{thm:free-campana-curve} and Theorem \ref{thmB} follows from it together with 
\cite[Corollary~6.7]{chen2024campana}, since a del Pezzo surface is rationally connected. Using
\ref{thmA} and \cite[Theorem~6.4]{chen2024campana} we get the arithmetic consequence that motivated the whole discussion.

\begin{corA}
    \label{corC}
    Let $k$ be algebraically closed of characteristic $0$, let $B$ be a smooth projective curve over $k$, and let $\pi : (\cX, \Delta_{\epsilon}) \to B$ 
    be a klt Campana fibration whose general fibre with its induced orbifold structure is a low degree $(\leq 6)$ del Pezzo orbifold, or a higher 
    degree del Pezzo orbifold with irreducible boundary. 
    Let $S \subset B$ be a finite set of closed points such that the underlying fibration $\underline{\pi} : \underline{\cX} \to B$ satisfies 
    weak approximation away from $S$, by \cite[Theorem~3]{hassett2005} one may take $S$ to be the set of places of bad reduction. 
    Then every finite collection of Campana jets supported on distinct fibres over $B \setminus S$ is induced by a Campana section of $\pi$.
\end{corA}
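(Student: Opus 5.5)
The statement to prove is Corollary~\ref{corC}. It is a formal consequence of Theorem~\ref{thmA} combined with the weak approximation theorem \cite[Theorem~6.4]{chen2024campana} (equivalently \cite[Theorem~1.6]{chen2024campana}), so the plan is to verify that theorem's hypotheses.

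\textbf{Step 1: reduce to the hypotheses of the weak approximation theorem.}
As recalled in the introduction, that theorem asks for the following:
\begin{itemize}
\item[(a)] the general fibre of the klt Campana fibration $\pi$, with its induced orbifold structure, has rationally connected underlying variety;
\item[(b)] that general fibre is strongly Campana uniruled;
\item[(c)] the underlying fibration $\underline{\pi}$ satisfies weak approximation at the chosen places.
\end{itemize}
Its conclusion is that finite collections of Campana jets on distinct fibres over $B\setminus S$ are realised by Campana sections. So I would check (a), (b), (c) in turn.

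\textbf{Step 2: check (a) and (c).}
For (a), the general fibre is a smooth del Pezzo surface over the algebraically closed field $k$, which is rational and in particular rationally connected. For (c), it holds by assumption on $S$. Alternatively, one may take $S$ to be the bad reduction places via \cite[Theorem~3]{hassett2005}, which applies because the generic fibre is rationally connected. For degree $\ge 4$ one could even take $S=\emptyset$ by \cite{colliot2004remarques}.

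\textbf{Step 3: check (b), the key point.}
I would apply Theorem~\ref{thmA} to the general fibre $(\cX_b,\Delta_{\epsilon}|_{\cX_b})$.
\begin{itemize}
\item First confirm that this restriction is a del Pezzo orbifold in the sense of Definition~\ref{def:dP-orbifold}. For general $b$, the restriction of an snc boundary stays snc (generic smoothness, characteristic $0$). The weights $\epsilon_i$ are unchanged, and ampleness of $-(K_{\cX_b}+\Delta_\epsilon|_{\cX_b})$ is part of the hypothesis on the general fibre.
\item In the degree $\le 6$ case, Theorem~\ref{thmA} applies directly. The same holds for $\PP^1\times\PP^1$.
\item For the remaining higher-degree cases ($\PP^2$, or its blow-up at one or two points), the hypothesis of irreducible boundary is exactly what Theorem~\ref{thmA} requires.
\end{itemize}
So the general fibre is strongly Campana uniruled.

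\textbf{Expected obstacle.}
The main subtlety is a bookkeeping one. Components of $\Delta$ on $\cX$ may restrict to reducible divisors on a general fibre, for example by monodromy. The irreducibility hypothesis must therefore be read on the fibre, which is how the corollary phrases it ("with its induced orbifold structure"). One should also make sure that weights on the fibre components are inherited correctly. Once that is arranged, \cite[Theorem~6.4]{chen2024campana} gives the conclusion.
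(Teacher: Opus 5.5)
Your proposal is correct and is the same argument as the paper's proof. A general fibre is a del Pezzo surface, hence rationally connected, and it is strongly Campana uniruled by Theorem~\ref{thmA}; one then applies \cite[Theorem~6.4]{chen2024campana} with the given $S$. Your extra remarks, that the irreducibility hypothesis is read on the fibre and on the admissible choices of $S$, agree with the corollary's wording and with Remark~\ref{rem:arbdegree}.
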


\begin{proof}
    A general fibre of $\pi$ is a del Pezzo surface, hence rationally connected, and is strongly Campana uniruled by Theorem \ref{thmA}. Now apply
    \cite[Theorem~6.4]{chen2024campana}.
\end{proof}

Finally, we record the classification underlying the appendices. For $d \leq 5$ this is \cite[Theorem~1.2]{dandapat2026classification}. The cases 
$d = 6, 7, 8$ are proved here in Appendices \ref{app:six}-\ref{app:eight}, and the case $d = 9$ is elementary (Proposition \ref{prop:P2}).

\begin{thmA}
    \label{thmD}
    Let $\X$ be a del Pezzo surface of degree $d$ and let $D$ be an irreducible curve on $\X$, with $\epsilon = 1 - 1/m$ and $m \geq 2$. 
    The pairs $(\X, \epsilon D)$ for which $-(K_{\X} + \epsilon D)$ is ample, and those for which it is only nef, are listed explicitly in 
    \textup{\cite[Theorem~1.2]{dandapat2026classification}} for $d \leq 5$, in Propositions \ref{prop:dp6},  \ref{prop:dp7}, \ref{prop:dp8-1} and 
    \ref{prop:dp8-2} for $d = 6,7,8$, and in Proposition \ref{prop:P2} for $d = 9$. In particular $-K_{\X} \cdot D \leq 2d$ in every such case.
\end{thmA}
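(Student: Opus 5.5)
The plan is to reduce the classification to intersection-number constraints and then enumerate. Write $\epsilon = 1-1/m$ with $m \ge 2$, so $\tfrac12 \le \epsilon < 1$.

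First I derive the key inequalities. Ampleness of $-(K_{\X}+\epsilon D)$ is tested by the Nakai--Moishezon criterion. On a del Pezzo surface the Mori cone is generated by the $(-1)$-curves, or by the rulings for $\PP^1\times\PP^1$, or by a line for $\PP^2$. So ampleness is equivalent to
\[
\epsilon\, D\cdot E < 1 \quad \text{for every generator } E
\]
(using $-K_{\X}\cdot E = 1$ for $(-1)$-curves), plus the analogous conditions on the rulings or the line. For $E \neq D$ this forces $D\cdot E \le 1$, since $\epsilon \ge \tfrac12$. If $D$ is itself a $(-1)$-curve, then $D\cdot D=-1$ and the condition holds automatically on $D$. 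The nef case replaces each strict inequality by $\le$, which allows $D\cdot E = 2$ when $m=2$.

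The second ingredient is bounding $D$. Write $D$ in a standard basis, $D = aH - \sum b_i E_i$ on the blow-up of $\PP^2$ (or $D = (a,b)$ on $\PP^1\times\PP^1$). The constraints $D\cdot E \le 1$ (or $\le 2$) over all $(-1)$-curves $E$, together with irreducibility, give $b_i\ge 0$ and $D\cdot(\text{exceptional class})\le 2$. These bound the coefficients via inequalities like $a - b_i - b_j \le 2$ (lines through two points) and $b_i \le 2$. For $d=9$ the only condition is $3 - \epsilon a >0$, so $a\le 5$ in the ample case and $a \le 6$ with $m=2$ in the nef case; this is Proposition \ref{prop:P2}. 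For $d=8,7,6$ the same linear inequalities cut out a finite polytope of classes. I then discard classes that are not irreducible, using $D^2 \ge -1$, arithmetic genus $p_a\ge 0$, and $D\cdot C\ge 0$ for every $(-1)$-curve $C\ne D$. For each surviving class I record the allowed $m$, which produces the tables of Propositions \ref{prop:dp6}, \ref{prop:dp7}, \ref{prop:dp8-1}, \ref{prop:dp8-2}. For $d\le 5$ I simply cite \cite[Theorem~1.2]{dandapat2026classification}.

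The uniform bound $-K_{\X}\cdot D\le 2d$ should follow without the full tables. The anticanonical class is a positive combination of $(-1)$-curves; for example, for $d\le 7$, $-K_{\X}$ is a nonnegative rational combination $\sum c_j E_j$ with $\sum c_j$ controlled by $d$. Pairing with $D\cdot E_j\le 2$ gives the bound. In the low-degree cases it is easier to check directly from the list. For $\PP^2$ the bound reads $3a\le 18$, which matches the extreme nef case $a=6$, $m=2$.

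The main obstacle is the exhaustive enumeration for $d=6$. There, six $(-1)$-curves form a hexagon, and the nef (as opposed to ample) boundary cases need care with equality, since irreducibility of a class with $D\cdot E=2$ for several $E$ must be verified, for instance by exhibiting a smooth member via a birational model. I would handle this by contracting to $\PP^2$ and translating each candidate class into a plane curve with prescribed multiplicities at three points, whose existence and irreducibility are classical.
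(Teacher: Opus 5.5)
Your overall plan matches the paper's: test $-(K_{\X}+\epsilon D)$ on the generators of $\overline{\operatorname{Eff}}(\X)$ (Lemma~\ref{lem:criterion}), write $D$ in the standard basis, enumerate the bounded region, discard non-irreducible classes, cite \cite{dandapat2026classification} for $d\le5$, and treat $\PP^2$ directly.

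There is one concrete error. For the blow up of $\PP^2$ at one point, the Mori cone is \emph{not} generated by $(-1)$-curves. It is spanned by $E$ and the fibre class $f=H-E$, and $-K_{\X}\cdot f=2$. If you impose only the condition at $E$, every class $\alpha H$ passes, since $\alpha H\cdot E=0$. So the region is not a finite polytope, and the test gives wrong answers. It would declare $3H$ ample for every $m$, and would accept $5H$, whereas $-(K_{\X}+5\epsilon H)\cdot f=2-5\epsilon<0$. You must add the condition $\epsilon\,D\cdot f<2$ (resp.\ $\le2$). This bounds $\alpha-\beta\le4$. It also produces the extra block (b) of Table~\ref{tab:d8blowup}: for $D\cdot f=3$ the divisor is ample only for $m=2$ and nef for $m\in\{2,3\}$. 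This is a different $m$-dependence from the ``$D\cdot E=2$ when $m=2$'' mechanism you describe.

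Two other differences from the paper are harmless.

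First, your bound on $-K_{\X}\cdot D$ via $-K_{\X}=\sum c_jE_j$ works for $d\le7$. In fact $\sum c_j=(-K_{\X})^2=d$ exactly, because every $(-1)$-curve has anticanonical degree $1$. This gives $-K_{\X}\cdot D\le\mu(D)\,d\le d/\epsilon$. The paper gets the same inequality in one line for every $d$, including $d=8,9$, in Lemma~\ref{lem:criterion}(3). It pairs the nef class $-(K_{\X}+\epsilon D)$ with $-K_{\X}$ to get $d-\epsilon(-K_{\X}\cdot D)\ge0$, so no appeal to the lists is needed.

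Second, for the converse (every listed class is irreducible) the paper uses Lemma~\ref{lem:irr-members}. On a del Pezzo surface of degree $\ge3$, a nef class with $D^2>0$ is base point free with smooth irreducible general member, and $kF$ is irreducible only for $k=1$. Your plane-curve constructions reach the same conclusion case by case, just more laboriously. Your necessary conditions agree with the paper's exclusions; for instance $p_a\ge0$ rules out $2F$.
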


Note, as in Remark \ref{rmk:classification-not-used} below, that Theorems \ref{thmA} and \ref{thmB} use neither \cite{dandapat2026classification} 
nor the appendices: the ampleness inequality $\Delta_{\epsilon} \cdot F < -K_{\X} \cdot F$ replaces the case by case verification. We include the 
classification because they are of independent interest and it makes the class of orbifolds under discussion explicit.

\begin{thmA}
    \label{thmE}
    Let $k$ be an algebraically closed field of characteristic $0$, let $B$ be a smooth projective curve over $k$, and let 
    $\pi: (\cX, \Delta_{\epsilon}) \to B$ be a klt Campana fibration whose general fibre, with its induced orbifold structure, is a del Pezzo orbifold
    of degree at least $3$ (with irreducible boundary for degree $\geq 7$). Then every finite collection of admissible Campana jets supported on distinct fibres of $\pi$ 
    is induced by a Campana section of $\pi$. Equivalently, weak approximation for Campana sections of $\pi$ holds at every place of $B$, including the
    places of bad reduction.
\end{thmA}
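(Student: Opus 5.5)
The plan is to split a given finite collection of admissible Campana jets into those on good-reduction fibres and those on the finitely many bad-reduction fibres. The good fibres are handled as in Corollary \ref{corC}. The bad fibres use the theorem of Colliot-Th\'el\`ene and Gille \cite{colliot2004remarques}, which gives weak approximation at \emph{every} place for del Pezzo surfaces of degree $\geq 4$. The degree $3$ case needs a separate input, presumably weak approximation for cubic surfaces over function fields of curves, which I would invoke at all places via the known results of Tian and Hassett--Tschinkel.

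\textbf{Step 1.} The generic fibre $X_K$ is a del Pezzo surface of degree $d\geq 3$. For $d\geq 4$, \cite{colliot2004remarques} shows that $X_K$ satisfies weak approximation at every finite set of places. For $d=3$ I would cite weak approximation for cubic surfaces over $k(B)$ at all places. With this, the hypothesis of Corollary \ref{corC} holds with $S=\emptyset$: the underlying fibration $\underline{\pi}$ satisfies weak approximation away from the empty set. This step is where the assumption ``degree $\geq 3$'' is used.

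\textbf{Step 2.} A general fibre of $\pi$ is a del Pezzo orbifold of degree $d\geq 3$, with irreducible boundary when $d\geq 7$. Such an orbifold is strongly Campana uniruled by Theorem \ref{thmA}, since $d\leq 6$ falls into the first case and $d=7,8,9$ into the irreducible-boundary cases. Note that $\PP^1\times\PP^1$ is covered either way. The underlying surface is rationally connected.

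\textbf{Step 3.} Apply \cite[Theorem~6.4]{chen2024campana} with $S=\emptyset$. Given weak approximation for $\underline{\pi}$ away from $S$ and a strongly Campana uniruled general fibre, it produces a Campana section inducing any finite collection of admissible Campana jets on fibres over $B\setminus S=B$. The equivalence with weak approximation for Campana sections then follows because admissible jets correspond to truncations of $\widehat{\OO_b}$-points.

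\textbf{Main obstacle.} The delicate point is whether \cite[Theorem~6.4]{chen2024campana} really only needs weak approximation for the underlying fibration on the prescribed set of places. It may instead need good reduction of the orbifold at those places, for example a snc boundary on the special fibre, so that Campana jets and the log deformation theory behave well. If good reduction is required, I would first check whether the proof of that theorem goes through verbatim. It builds a comb from a section approximating the underlying jets, attaches free Campana curves in general fibres, and smooths. The only place fibre geometry enters is the choice of section matching jets, which underlying weak approximation supplies. Failing that, I would pass to a log resolution or a semistable model of $(\cX,\Delta)$ near the bad fibres; this does not change the generic fibre or the Campana condition at good places. I would then argue that admissible Campana jets lift to the new model.
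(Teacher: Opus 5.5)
Your proposal is correct and is essentially the paper's own proof (Theorem~\ref{thm:mainWA}): a general fibre is strongly Campana uniruled by Theorem~\ref{thmA}; the generic fibre satisfies weak approximation at every place, by Colliot-Th\'el\`ene--Gille \cite{colliot2004remarques} for $d\geq 4$ and by Tian \cite{tian2015weak} for $d=3$; and Theorem~\ref{thm:WA} is then applied with $S=\emptyset$. Your worry about good reduction does not arise, because Theorem~\ref{thm:WA} as stated only requires weak approximation of $\underline{\pi}$ outside $S$; also, for $d=3$ it is Tian's theorem, not the square-free-discriminant result of \cite{hassett2008approximation}, that gives all places.
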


\subsection{Strategy of the proof}\label{ss:strategy}

Conjecture~\ref{conj:CLT} is a theorem in dimension one \cite[Theorem~7.1]{chen2024campana}. Let $F \in \operatorname{Pic}(\X)$ be a conic class, that is, $F$ effective with 
$F^2 = 0$ and $-K_{\X} \cdot F = 2$, by Lemma \ref{lem:conic-class} the linear system $|F|$ is a base point free pencil whose general member 
$\underline{C}'$ is a smooth rational curve. Ampleness of $-(K_{\X}+\Delta_{\epsilon})$ says that
\begin{equation}
\label{eq:key-inequality}
  \sum_{k} \Bigl( 1 - \frac{1}{m_{i(k)}} \Bigr)
  \;=\; \Delta_{\epsilon} \cdot F \;<\; -K_X \cdot F \;=\; 2 ,
\end{equation}
the sum being over the points of $\underline{C}' ~\cap~ \Delta$, weighted by the multiplicity of the boundary component through each of them. 
The inequality \eqref{eq:key-inequality} says that the orbifold $\PP^1$ is spherical, and a cover $g : \PP^1 \to \underline{C}'$ 
with the required ramification exists (Lemma \ref{lem:covers}, a variant of \cite[Theorem~7.1]{chen2024campana}). The composition $f = h \circ g$ is 
then a Campana curve, where $h: C' \hookrightarrow X$ is an inclusion, and a computation of the degree of its log normal bundle (Lemma \ref{lem:degree}) shows that it is free. We emphasize on the 
following two points:

\begin{enumerate}
    \item[(i)] The cover $g$ is \emph{not} taken to be totally ramified. A cover of $\PP^1$ totally ramified over $n$ marked points exists only for 
    $n \leq 2$, whereas $\Delta \cdot F$ can be larger, \eqref{eq:key-inequality} gives a required cover with ramification indices $e_q \geq m_{i(k)}$ 
    over the $k$-th point. For $m = 2$ this permits up to three boundary points, and the relevant covers are quotients by dihedral, tetrahedral, 
    octahedral or icosahedral subgroups of $\mathrm{PGL}_2(k)$.

    \item[(ii)] The number $n$ of boundary points on $\underline{C'}$ is the intersection number $\Delta \cdot F$ because by Bertini's theorem we
    choose $\underline{C'}$ meeting $\Delta$ transversally and away from $\operatorname{Sing}(\Delta)$. When a tangency is introduced as in Case 2 of 
    Proposition \ref{prop:line}, where $\underline{C'}$ is the strict transform of a general tangent line of $\beta_* D$, one has $n = t-1$ with
    $t = D \cdot H$, and the multiplicities $\mu_k$ of Lemma \ref{lem:degree} must be carried through the computation. Tangency relaxes the 
    hypothesis of Lemma \ref{lem:covers} and strengthen that of Lemma \ref{lem:degree}. The constructions in Section \ref{sec:mainthm} satisfies the both.
\end{enumerate}

Conic classes do not suffice for the interiority statement. If $\rho(X) \leq 2$ and $X \neq \PP^1 \times \PP^1$, the conic classes span a subcone, 
and we must also realise the pullback $H$ of the line class by a free Campana curve. This is where the tangent line construction of 
(ii), and the hypothesis that the boundary be irreducible, are used. Lemma \ref{lem:positivity} then isolates a finite set $\mathcal{G}(X)$ of nef classes such that
any positive integral combination of them lies in the interior of $\Nef_1(X)$ and any two of them meet. Choosing representatives that meet away from
$\Delta$, gluing and smoothing (Proposition \ref{prop:gluing}) produces a single free Campana curve of class $\sum_j N_j \gamma_j$, which proves 
Theorem \ref{thmA}. Theorem \ref{thmB} and Corollary \ref{corC} then follow from \cite{chen2024campana} as indicated above.

\begin{remark}
    \label{rmk:sharp}
    The only place where ampleness of $-(K_{\X}+\Delta_{\epsilon})$ is used is the strict inequality \eqref{eq:key-inequality}. If 
    $-(K_{\X}+\Delta_{\epsilon})$ is merely nef then $\Delta_{\epsilon} \cdot F = 2$ may occur, the induced orbifold $\PP^1$ is Euclidean rather than 
    spherical, and no cover $\PP^1 \to \PP^1$ with the required ramification exists, only covers by elliptic curves do. The construction therefore 
    detects exactly the log Fano condition. 
\end{remark}

\begin{remark}
    \label{rmk:scope}
    If $(X,\Delta_{\epsilon})$ is a klt Fano orbifold with $\X$ a smooth projective surface, then $-K_{\X}= -(K_{\X}+\Delta_{\epsilon}) + \Delta_{\epsilon}$ 
    is big but need not be ample, so $\X$ need not be a del Pezzo surface. For instance, let $\X = \mathbb{F}_2$ be the Hirzebruch surface with negative section 
    $C_0$, $C_0^2 = -2$, and let $\Delta_{\epsilon} = \tfrac12 C_0$, i.e.\ $m = 2$. Then the intersection number of 
    $-(K_{\X} + \tfrac12 C_0) = \tfrac32 C_0 + 4f$ with $C_0$ is  $1$ and with the ruling $f$ is $\tfrac32$, hence it is ample, while $-K_{\X} \cdot C_0 = 0$. 
    Thus Theorems \ref{thmA} and \ref{thmB} do not complete Conjecture \ref{conj:CLT} in dimension two, the remaining surfaces are the non-del Pezzo 
    smooth rational surfaces, where the boundary contains the curves of non-positive anticanonical degree.
\end{remark}
     
\begin{remark}[Reducible boundaries]
    \label{rmk:reducible}
    Lemma \ref{lem:degree}, Proposition \ref{prop:conic} are proved for an arbitrary strict normal crossings boundary 
    $\Delta_{\epsilon} = \sum_i \epsilon_i \Delta_i$. Consequently Theorem \ref{thmA} holds verbatim for every klt Campana orbifold whose underlying 
    surface is a del Pezzo surface of degree at most $6$, or is $\PP^1 \times \PP^1$. Only the auxiliary class $H$ of Proposition \ref{prop:line} uses 
    irreducibility, so only $\PP^2$, the blow up of $\PP^2$ at one point, and the del Pezzo surface of degree $7$ require a separate argument in the 
    reducible case, and that argument cannot be avoided, as Remark \ref{rem:reducible} shows.
\end{remark}
     
\subsection{Organisation of the paper}
     
    In section~\ref{sec:prelim} we fix notation and recall results from \cite{chen2024campana}: Campana curves and their contact orders 
    (\S\ref{ss:campana-curves}), the normal complex and free log curves (\S\ref{ss:freeness}), gluing and smoothing (\S\ref{ss:gluing}), weak 
    approximation (\S\ref{ss:WA}), and the geometry of del Pezzo surfaces that we use (\S\ref{ss:dP}). Section~\ref{sec:mainthm} contains the proofs: 
    \S\ref{ss:lemmas} the two elementary lemmas which control the construction of free Campana curve, \S\ref{ss:construction} the production of free 
    Campana curves from conic classes and from the line class, and \S\ref{ss:proofs} the main theorems. Section~\ref{sec:wa} contains the arithmetic 
    results of weak approximation for Campana sections: \S\ref{ss:dp-fib} defines Campana del Pezzo fibration and \S\ref{ss:thmE} contains the proof
    of Theorem~\ref{thmE}. Appendices \ref{app:six}, \ref{app:seven} and \ref{app:eight} classify the del Pezzo orbifolds with irreducible boundary 
    of degree $6$, $7$ and $8$ respectively.
     
\subsection*{Conventions}
     
    We work over an algebraically closed field $k$. From \S\ref{ss:freeness} onwards we assume that $\operatorname{char} k = 0$, which is where 
    the results of \cite{chen2024campana} are proven and we use Bertini, generic smoothness and biduality. All log structures are fine and saturated.
    A \emph{del Pezzo surface} is a smooth projective surface with $-K_X$ ample, and its degree is $K_X^2$. A \emph{$(-1)$-curve} is an irreducible 
    curve $E$ with $E^2 = -1$ on a del Pezzo surface.

\subsection*{Acknowledgement} The author is grateful to his advisor Prof. Sho Tanimoto for introducing the Conjecture~\ref{conj:campana} and for his 
encouragement, useful discussions on the first draft of the paper. The author thanks Sho Tanimoto, Brian Lehmann and Qile Chen for their comments 
on the first draft of the paper. The author was supported by Japanese Government MEXT Scholarship for Research Students, recommended by 
Embassy of Japan, New Delhi, India. Claude Opus 5 was used to proofread the document. 

\section{Preliminaries}{\label{sec:prelim}}

In this section we recall results from \cite{chen2024campana} about logarithmic deformation theory. For the foundations of logarithmic geometry we refer
to \cite{kato1988logarithmic} and to the book \cite{ogus2018lectures}, and for the theory of stable log maps to \cite{chen2014stable, abramovich2014stable,gross2013logarithmic}.

\subsection{Notation}
\label{ss:notation}
 
For a smooth projective variety $\X$ we write $N^1(\X)$ and $N_1(\X)$ for the groups of divisors and of $1$-cycles modulo numerical equivalence, and
$\overline{\operatorname{Eff}}(\X) \subset N^1(\X)_{\R}$ and $\Nef_1(\X) \subset N_1(\X)_{\R}$ for the pseudo-effective cone of divisors and its dual cone, the \emph{nef cone of
curves}. Thus a class $\alpha \in N_1(\X)_{\R}$ lies in the interior of $\Nef_1(\X)$ if and only if $\alpha \cdot D > 0$ for every non-zero $D \in \overline{\operatorname{Eff}}(\X)$, 
and, when $\overline{\operatorname{Eff}}(\X)$ is rational polyhedral, if and only if $\alpha \cdot R > 0$ for a generator $R$ of each extremal ray.

Given a smooth projective variety $\X$ with a strict normal crossings (SNC) divisor $\Delta = \bigcup_i \Delta_i$, we write $X = (\X, \mathcal{M}_{\X})$ for
the associated log scheme, whose log structure is 
$\mathcal{M}_{\X}(U) = \{ s \in \OO_X(U) \mid s|_{U \setminus \Delta} \in \OO_X^\times(U \setminus \Delta)\}$. We denote by $T_X$ the \emph{logarithmic} 
tangent bundle, the subsheaf of the classical tangent bundle $T_{\X}$ consisting of vector fields tangent to $\Delta$. Thus
\begin{equation}
\label{eq:log-tangent}
0 \longrightarrow T_{X} \longrightarrow T_{\X} \longrightarrow
\bigoplus_i (\iota_{\Delta_i})_* N_{\Delta_i/\X} \longrightarrow 0 ,
\qquad
\det T_{X} \cong \OO_{\X}(-K_{\X} - \Delta),
\end{equation}
where $\iota_{\Delta_i} : \Delta_i \hookrightarrow \X$ is the inclusion and $(\iota_{\Delta_i})_*$ is the pushforward (extension by zero). 

\subsection{Campana orbifolds}
\label{ss:orbifolds}
 
\begin{definition}[{\cite[Section~1.2.1]{campana2004orbifolds}}]
\label{def:orbifold}
Let $\X$ be a smooth projective variety with an SNC divisor $\Delta = \bigcup_i \Delta_i$ and let $X$ be the associated log scheme. Assign to each 
irreducible component $\Delta_i$ a weight $$\epsilon_i = 1 - \frac{1}{m_i}, \qquad m_i \in \Z_{\geq 1},$$
and set $\Delta_\epsilon = \sum_i \epsilon_i \Delta_i$. The pair $(\X, \Delta_\epsilon)$ is a
\emph{klt Campana orbifold}. It is a \emph{klt Fano orbifold} if $-(K_{\X} + \Delta_\epsilon)$ is ample.
\end{definition}

\subsection{Campana curves}
\label{ss:campana-curves}
 
Let $S$ be a geometric log point with the trivial log structure and let $(\pi : C \to S,\, f : C \to X)$ be a stable log map with the canonical log 
structure. Such an $f$ is called \emph{non-degenerate}, then $\underline{C}$ is smooth and irreducible, $f(\underline{C}) \not\subset \Delta$,
and $f^{-1}(\Delta)$ consists of marked points. For a marking $p_k$ the \emph{contact order} $\mathbf{c}_k = (c_{k,i})_i$ records the local 
multiplicity of $f^*\Delta_i$ at $p_k$, (see \cite[Section~3.2]{chen2014stable} and \cite[Section~4.1]{abramovich2014stable} for the definition in 
general, including at nodes). 

\begin{definition}[{\cite[Definition~1.1]{chen2024campana}}]
    \label{def:campana-curve}
    A non-degenerate stable log map $f : C \to X$ is a \emph{Campana curve} for $(\X, \Delta_\epsilon)$ if for every marking $p_k$ and every $i$ 
    we have either $c_{k,i} = 0$ or $c_{k,i} \geq m_i$. A Campana curve is \emph{rational} if $\underline{C}$ has genus $0$.
\end{definition}

Since logarithmic deformation preserve contact orders, the theory of moduli of stable log maps is useful to study deformation theory of Campana curves. 
Hence we work in the category of log schemes instead of the notion of a curve meeting $\Delta_i$ with specific contact orders.   

\begin{definition}[{\cite[Definition~1.2 and Definition~5.6]{chen2024campana}}]
    \label{def:campana-uniruled}
    A klt Campana orbifold $(\X,\Delta_\epsilon)$ is called \emph{Campana uniruled} if there is a dominant family of rational Campana curves whose underlying 
    curves have a non-trivial numerical class. It is called \emph{Campana rationally connected} if there is a family of rational Campana curves dominating 
    $\X$ that pass through two general points of $\X$.
\end{definition}

\subsection{The normal complex and free log curve}
\label{ss:freeness}
 
We assume that $\operatorname{char} k = 0$. For a log map $f : C \to X$ over a geometric log point $S$, the \emph{normal complex} $N_f$ is defined by
the distinguished triangle $$T_{C/S} \xrightarrow{\ df\ } f^*T_{X} \longrightarrow N_f \xrightarrow{\ [1]\ }.$$
Its cohomology controls the deformations of $f$ relative to Olsson's log stack \cite[\S~2.2.2]{chen2024campana}. In our situation, $N_f$ is a sheaf:
 
\begin{definition}[{\cite[Definition~2.5]{chen2024campana}}]
    \label{def:log-immersion}
    A log map $f : C \to X$ over a geometric log point is a \emph{log immersion} if every node and every marking of $C$ has non-zero contact order
    $\mathbf{c}$ with $\operatorname{char} k \nmid \mathbf{c}$, and if $\underline f$ is an immersion away from nodes and markings.
\end{definition}

In characteristic $0$ the divisibility condition is trivial \cite[Notation~2.3]{chen2024campana}, so the first condition says only that all nodes and
markings map into $\Delta$ with non-zero contact order. If $f$ is a log immersion then $df : T_{C/S} \to f^*T_{X}$ is the inclusion of a subbundle, 
and $N_f = \operatorname{Cok}(df)$ is a vector bundle on $\underline{C}$ \cite[Lemma~2.6]{chen2024campana}. If $f$ has in addition finitely many markings of
contact order $0$, then $N_f$ is a sheaf whose torsion free part is the normal bundle of the associated log map with only contact markings, and whose 
torsion part is supported on those markings \cite[Corollary~2.7]{chen2024campana}. 

\begin{definition}[{\cite[Definition~4.2]{chen2024campana}}]
    \label{def:free}
    A non-degenerate rational log curve $f : C \to X$ with contact orders $\varsigma$ is \emph{$\varsigma$-free} (resp.\ \emph{$\varsigma$-very free}) 
    if $H^1(N_f(-1)) = 0$ (resp.\ $H^1(N_f(-2)) = 0$). A \emph{free Campana curve} is a Campana curve which is $\varsigma$-free for its collection of 
    contact orders.
\end{definition}

\begin{proposition}[{\cite[Proposition~4.4]{chen2024campana}}]
    \label{prop:free-equiv}
    Let $\varsigma$ be a collection of non-zero contact orders, not divisible by $\operatorname{char} k$. Then $X$ is separably $\varsigma$-uniruled
    (resp.\ separably $\varsigma$-rationally connected) if and only if it admits a $\varsigma$-free (resp.\ $\varsigma$-very free) rational log curve. 
    In particular, in characteristic $0$, Campana uniruledness and Campana rational connectedness are equivalent to the existence of free and of very free 
    Campana curves respectively.
\end{proposition}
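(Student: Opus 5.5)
The statement is Proposition~\ref{prop:free-equiv}, quoted from \cite[Proposition~4.4]{chen2024campana}. I would prove it by the standard Kollár-style argument: relate the tangent space of the log moduli space to the normal complex $N_f$, and use generic smoothness of the evaluation map. Fix the collection $\varsigma$ of contact orders. Let $M = \mathcal{M}_{0,n}(X,\beta,\varsigma)$ denote the moduli of non-degenerate genus zero stable log maps with these contact orders. It has an evaluation map $\mathrm{ev}: M \times_{\mathcal{M}} \mathcal{C} \to X$ (for uniruledness, one extra unmarked point) and $\mathrm{ev}_2$ (for rational connectedness, two extra points).

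\textbf{Free curves give uniruledness.} Suppose $f$ is $\varsigma$-free, so $H^1(N_f(-1))=0$, hence $H^1(N_f)=0$. The log obstruction space of $f$ relative to Olsson's log stack vanishes, so $M$ is smooth at $[f]$ of the expected dimension. Its tangent space surjects onto the fibre of $N_f$ at a general point $p$, by the long exact sequence of $0\to N_f(-p)\to N_f\to N_f|_p\to 0$ and the vanishing $H^1(N_f(-p))=0$. Away from $\Delta$, $T_X=T_{\X}$, so the differential of $\mathrm{ev}$ surjects onto $T_{\X,f(p)}$ (the tangent direction of $C$ covers the rest). Hence $\mathrm{ev}$ is dominant. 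The very free case is identical, using $N_f(-2)$ to show $\mathrm{ev}_2$ is dominant on $\X\times\X$; thus $f$ is separable.

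\textbf{Uniruledness gives a free curve.} Take a dominant family of $\varsigma$-log curves. By generic smoothness (characteristic $0$, or separability in general), a general member has $d\mathrm{ev}$ surjective at a general point. First replace $f$ by its log immersion model: since all markings have nonzero contact order prime to the characteristic, \cite[Lemma~2.6]{chen2024campana} applies, so after passing to a general member $N_f$ is a vector bundle (possibly after normalizing the family).

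The main obstacle is showing that this surjectivity forces $N_f$ to be generated by global sections at a general point. I would mirror Kollár II.3.10 with the log tangent bundle. The tangent map of the family factors through $H^0(N_f)\to N_f|_p$, and it is surjective for general $p$. Hence $N_f$ is generically globally generated on $\mathbb{P}^1$, so every summand $\mathcal{O}(a_i)$ has $a_i\ge 0$, which gives $H^1(N_f(-1))=0$. The very free case is the same with two points, giving $a_i\ge1$. The final sentence of the proposition follows by applying this to Campana contact orders ($0$ or $\ge m_i$). Markings of contact order $0$ only add torsion by \cite[Corollary~2.7]{chen2024campana}, so they are harmless.
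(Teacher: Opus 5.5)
Your strategy is the standard Koll\'ar-type argument behind \cite[Proposition~4.4]{chen2024campana}; the paper gives no proof of its own and simply imports the statement. The direction ``free $\Rightarrow$ separably uniruled'' is fine as written. The converse, however, rests on a false step. You assert that a general member of the given dominant family is a log immersion, so that $N_f$ is a vector bundle by \cite[Lemma~2.6]{chen2024campana}. That lemma only gives ``log immersion $\Rightarrow$ $N_f$ locally free''. Being a log immersion also requires that $\underline{f}$ be an immersion away from the markings, not just non-zero contact orders at the markings. Nothing forces this for a general member of a dominant family, and there is no ``log immersion model'' to pass to; normalizing the parameter space does not change the maps.

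The paper itself contains a counterexample. In Proposition~\ref{prop:conic} with $\Delta\cdot F=1$, take $f=h\circ g$ with $g(z)=z^{w_1}$, $w_1\geq2$, and $C'$ running through $|F|$. These curves form a dominant family of free Campana curves. Yet every member is ramified at the unmarked point $\infty$, and $N_f$ contains the non-zero torsion subsheaf $N_g$ of Lemma~\ref{lem:degree}(3). More generally, any family of multiple covers ramified away from the markings behaves this way. So the splitting $N_f\cong\bigoplus_i\OO(a_i)$ that you then use is not available.

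The repair is short, and it is exactly how the paper treats $N_g$ in Lemma~\ref{lem:degree}(3)--(4).
\begin{itemize}
\item Since the contact order at each marking is non-zero in $k$, the log differential is non-zero on the fibre at every marking. Locally $f^*z=u\,t^{c}$, and the $z\partial_z$-component of $df(t\partial_t)$ is $c+tu'/u$. In characteristic $0$ this also follows from generic smoothness.
\item Hence $df\colon T_{C/S}\to f^*T_X$ is a non-zero map out of a line bundle, so it is injective. Then $N_f$ is the sheaf $\operatorname{Cok}(df)$, which sits in an exact sequence $0\to\tau\to N_f\to Q\to0$. Here $Q$ is a vector bundle and $\tau$ is a torsion sheaf supported where $\underline f$ is not immersive.
\item Because $H^1(\tau(-r))=0$, you get $H^1(N_f(-r))\cong H^1(Q(-r))$ for $r=1,2$.
\item Because $N_f\to Q$ is surjective and an isomorphism near a general point, surjectivity of $H^0(N_f)\to N_f|_p$ (resp.\ onto $N_f|_p\oplus N_f|_q$) for general $p,q$ passes to $Q$.
\end{itemize}
Your splitting argument applied to $Q$ then gives all degrees $\geq0$ (resp.\ $\geq1$), which completes the converse.

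The only case this does not cover is $\varsigma=\emptyset$ in positive characteristic, where $df$ may vanish identically along a separably dominant family. There you fall back on Koll\'ar's fixed-domain argument with $H^0(f^*T_X)$. Combine it with the surjection $H^1(f^*T_X(-r))\to H^1(N_f(-r))$, which comes from $H^2(T_{C/S}(-r))=0$.
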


\begin{definition}[{\cite[Definition~5.9]{chen2024campana}}]
    \label{def:strongly-uniruled}
    A klt Campana orbifold $(\X, \Delta_\epsilon)$ is \emph{strongly Campana uniruled} if there exists a free Campana curve $f : C \to X$ such that
    $f_*[C]$ lies in the interior of the nef cone of curves $\Nef_1(X)$.
\end{definition}

When $\rho(X) = 1$ the interiority condition is vacuous, so Definition \ref{def:strongly-uniruled} is a strengthening of Campana uniruledness only for
higher Picard rank surfaces which is precisely the situation for del Pezzo surfaces of degree $\leq 8$. 

\subsection{Gluing and smoothing}
\label{ss:gluing}
 
We discuss the form in which we use the gluing construction of \cite[Section~2.6]{chen2024campana} and the smoothing of \cite[Section~2.7]{chen2024campana}. 
Recall from \cite[\S~2.6.1]{chen2024campana} that two log maps meeting at a point of $X^\circ = X \setminus \Delta$ can be glued to a stable log map 
over a geometric log point whose restriction to each component is the given map and whose markings and contact orders are inherited, the Campana 
condition of Definition \ref{def:campana-curve} is preserved, since the new node lies over $X^\circ$.

\begin{proposition}[{\cite[\S~2.6.1 and \S~2.7]{chen2024campana}}]
    \label{prop:gluing}
    Let $f_j : C_j \to X$, $1 \leq j \leq s$, be free Campana curves over the standard log point, and let $x_1, \dots, x_{s-1}$ be pairwise distinct 
    closed points of $\X \setminus \Delta$ with $x_j \in f_j(C_j) \cap f_{j+1}(C_{j+1})$. Then there exists a free Campana curve $f : C \to X$ with
    $$f_*[C] \;=\; \sum_{j=1}^{s} f_{j*}[C_j].$$
\end{proposition}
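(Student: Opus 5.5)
The plan is to reduce to gluing two free curves at a time and then smoothing, by induction on $s$. For $s=1$ there is nothing to prove. For $s \geq 2$ we first glue. Choose preimages $q_j \in C_j$ and $q'_{j+1} \in C_{j+1}$ of $x_j$. Since $x_j \in \X \setminus \Delta$, these points are not markings. Form the nodal curve $C_0 = C_1 \cup_{q_1 \sim q'_2} C_2 \cup \dots \cup C_s$, a chain of rational curves. By \cite[\S~2.6.1]{chen2024campana} the maps $f_j$ glue to a stable log map $f_0 : C_0 \to X$ over a geometric log point. Each node has contact order $0$, because it maps to $X^\circ$, and the markings and contact orders of the $f_j$ are kept. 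So $f_0$ still satisfies the Campana condition of Definition \ref{def:campana-curve}, and $f_{0*}[C_0] = \sum_j f_{j*}[C_j]$. The points $x_j$ are pairwise distinct, so no two nodes are forced to map to the same point. Each $C_j$ has at most two nodes, which lets us keep the gluing points general on each component.

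Next we check that $f_0$ is unobstructed and free. Write $N_j = N_{f_j}$. Near a node mapping to $X^\circ$ the log structure is trivial, so the normal complex of $f_0$ restricts on each component $C_j$ to $N_j$, modified only at the node points. We then use the standard exact sequence for a chain,
\[
0 \to N_{f_0}(-p) \to \bigoplus_j N_j(-\text{nodes on } C_j) \to \bigoplus_{\text{nodes}} (\text{skyscrapers}) \to 0 ,
\]
where $p$ is one general point on a chosen end component. Freeness of each $f_j$ gives $H^1(N_j(-1))=0$. Each $N_j$ is then a sum of line bundles of degree $\geq 0$, so $H^1(N_j(-2)) = 0$ fails at most in degree $0$. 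To handle this we twist down by only one point per component. The usual comb/chain argument (as in Koll\'ar, II.7, adapted in \cite[\S~2.7]{chen2024campana}) then gives $H^1(N_{f_0}(-p)) = 0$ and surjectivity onto the node contributions. This shows that the log deformations of $f_0$ are unobstructed and that the nodes can be smoothed independently.

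Finally we smooth. Unobstructedness means the moduli of stable log maps with the given contact orders is smooth at $[f_0]$ of the expected dimension. The node-smoothing directions are unobstructed because the nodes lie over $X^\circ$, where the log structure is trivial. Hence $[f_0]$ lies in the closure of a locus of maps $f : C \to X$ with $\underline{C} \cong \PP^1$ and with the same markings and contact orders, so $f$ is a Campana curve, and $f_*[C] = f_{0*}[C_0]$ by flatness. Vanishing of $H^1(N(-1))$ is an open condition in the family, and it holds on the special fibre in the form established above. Therefore a general smoothing $f$ is free. The main obstacle I expect is the $H^1$ computation at the nodes, namely confirming that the log normal complex of the glued map really splits componentwise up to the skyscraper terms. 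If that direct route is awkward, I would instead glue two curves at a time, smoothing $C_1 \cup C_2$ to a free curve and then gluing it to $C_3$ at $x_2$, and so on, which reduces everything to the two-component case of \cite[\S~2.7]{chen2024campana}.
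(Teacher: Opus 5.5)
Your fallback is exactly the paper's proof. The paper reduces to $s=2$ and adds the preimages $q_1,q_2$ of $x_1$ as contact-order-$0$ markings, which is harmless by Remark~\ref{rem:markings}. It then glues by \cite[\S~2.6.1]{chen2024campana} and quotes \cite[\S~2.7]{chen2024campana} for a smoothing with $H^1(N_f(-1))=0$, without computing any cohomology itself.

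Your primary route is genuinely different: you glue the whole chain $C_0=C_1\cup\dots\cup C_s$, verify the vanishing on $C_0$ yourself, and smooth all $s-1$ nodes at once. This buys something. In a pairwise induction, the curve obtained by smoothing $f_1\cup\dots\cup f_j$ need not pass through $x_j$ any more. One then has to carry the marking over $x_j$ through the smoothing with its image fixed, a point the paper's reduction to $s=2$ passes over. Simultaneous smoothing avoids this issue; the price is that the chain computation is now yours.

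As written, that computation is wrong, in two ways.
\begin{itemize}
\item Sections of $N_{f_0}(-p)$ need not vanish at the nodes, so there is no injection into $\bigoplus_j N_j(-\text{nodes on }C_j)$.
\item Any sequence that twists a middle component at both of its nodes needs $H^1(N_j(-2))=0$. This fails for exactly the curves this paper glues, since Proposition~\ref{prop:conic} gives $N_f\cong\mathcal{O}_{\PP^1}$.
\end{itemize}

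The fix is as follows.
\begin{itemize}
\item Use the restriction sequence along the chain, $0\to\mathcal{K}|_{C'}(-n_1)\to\mathcal{K}(-p)\to\mathcal{K}|_{C_1}(-p)\to 0$, with $p\in C_1$ general, $C'=C_2\cup\dots\cup C_s$ and $n_1=C_1\cap C'$. Then induct on $C'$ with $n_1$ playing the role of $p$. This twists each component by exactly one point.
\item Apply the sequence termwise to the two-term complex $\mathcal{K}=[T_{C/S}\to f_0^*T_X]$, not to its cokernel. At a node of contact order $0$, the generator $x\partial_x$ of $T_{C/S}$ vanishes as an ordinary vector field. So $df_0$ is zero on the fibre at the node, and the cokernel is not locally free there.
\end{itemize}

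This also resolves what you call the main obstacle. We have $T_{C/S}|_{C_j}=T_{C_j}(-\text{markings}-\text{nodes})$, so $\mathcal{K}|_{C_j}$ is the normal complex of $f_j$ with its node points added as contact-order-$0$ markings. Its $\mathbb{H}^1$ after a one-point twist therefore equals $H^1(N_{f_j}(-1))=0$, by Remark~\ref{rem:markings}. This is precisely why the paper adds those markings before gluing. With this replacement, your primary route is complete at the same level of citation as the paper.
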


Adding a marking of contact order $0$ to a free Campana curve changes neither the Campana condition nor freeness: by \cite[Corollary~2.7]{chen2024campana} 
it only enlarges the torsion subsheaf of $N_f$, and torsion does not contribute to $H^1$. We use this without further comment when choosing the points 
$x_j$ above. The proof of Proposition \ref{prop:gluing} is recalled in Lemma \ref{lem:gluing}.

\subsection{Weak approximation for Campana sections}
\label{ss:WA}
 
Let $B$ be a smooth projective curve over $k$. A \emph{klt Campana fibration} is a klt Campana orbifold $(\cX,\Delta_\epsilon)$ together with a log 
fibration $\pi : (\cX,\Delta) \to B$ as in \cite[Section~3]{chen2024campana}, a \emph{Campana section} is a section satisfying the Campana condition, 
and a \emph{Campana $n$-th jet} is an admissible jet whose local multiplicities along the boundary satisfy the Campana condition 
\cite[Definition~6.2]{chen2024campana}. The following two statements are the reason for proving Theorem \ref{thmA}.

\begin{theorem}[{\cite[Theorem~6.4]{chen2024campana}}]
    \label{thm:WA}
    Assume $k$ is algebraically closed of characteristic $0$. Let $\pi : (\cX,\Delta_\epsilon) \to B$ be a klt Campana fibration such that a general 
    fibre of $\pi$ is rationally connected and strongly Campana uniruled. Let $S$ be a finite set of closed points of $B$ such that 
    $\underline\pi : \underline{\cX} \to B$ satisfies weak approximation outside $S$. Fix points $p_1,\dots,p_r \in \cX$ on distinct fibres outside $S$ 
    and, for each $j$, a Campana $n_j$-th jet $\sigma_j$ at $p_j$. Then there is a Campana section approximating the jet data $\{p_j,\sigma_j\}$.
\end{theorem}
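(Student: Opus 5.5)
The statement is \cite[Theorem~6.4]{chen2024campana}, and I would follow the Hassett--Tschinkel comb-smoothing strategy \cite{hassett2005}, run inside the moduli of stable log maps so that contact orders, and hence the Campana condition, are preserved under deformation.

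\textbf{Step 1: an initial section.} Since a general fibre is rationally connected, \cite[Theorem~1.1]{graber2003families} gives a section. Because $\underline\pi$ satisfies weak approximation outside $S$, I choose a section $s_0$ whose underlying $n_j$-jet at each $p_j$ agrees with that of $\sigma_j$. Away from the $p_j$, a general choice makes $s_0$ meet $\Delta$ transversally at finitely many points $q_1,\dots,q_t$ in general fibres. There the contact order is $1$, which violates the Campana condition whenever $m_i\ge 2$.

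\textbf{Step 2: repairing the contact orders.} Strong Campana uniruledness of a general fibre $X_b$ gives a free Campana curve whose class lies in the interior of $\Nef_1(X_b)$. Splitting contact orders as in \cite[Propositions~2.11 and~5.11]{chen2024campana* } gives, for each $i$, a free Campana curve in $X_b$ through a general point of $\Delta_i\cap X_b$. I attach such a fibral curve $T_\ell$ at each bad point $q_\ell$, forming a log comb whose node lies over $\Delta$. The aim is that, after log smoothing, the contact order of the section at the points near $q_\ell$ is at least $m_i$ (or $0$), so the smoothed curve is a Campana section.

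This is the step I expect to be the main obstacle. It requires controlling the logarithmic gluing at a node mapping into $\Delta$, rather than into $X^\circ$ as in Proposition \ref{prop:gluing}, and checking that the balancing of contact orders at the node produces admissible Campana markings. If that fails, the first alternative I would try is to move $s_0$ so that it meets $\Delta$ only in fibres where $X_b\cap\Delta$ sits in good position, and then absorb the defect using fibral teeth.

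\textbf{Step 3: positivity.} Next I attach many additional free Campana teeth in general fibres at points of $X^\circ$, using Proposition \ref{prop:gluing}. The interiority of their class is what makes the restriction of the log normal sheaf to the section increase in every direction. The Hassett--Tschinkel argument then gives that a general smoothing $s$ of a comb with sufficiently many teeth satisfies
\[
H^1\Bigl(N_s\bigl(-\textstyle\sum_j (n_j+1)p_j\bigr)\Bigr)=0 .
\]
The Campana markings at the $p_j$ are kept as part of the fixed contact data.

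\textbf{Step 4: conclusion.} By this vanishing, the evaluation map from the log deformation space of $s$ to the space of Campana jets at the $p_j$ is surjective. Contact orders are constant in log families, so every deformation is still a Campana section. Hence some deformation induces exactly the prescribed jets $\sigma_j$.
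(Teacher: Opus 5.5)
The paper gives no proof here; it imports the statement from \cite[Theorem~6.4]{chen2024campana}. I therefore compare your outline with what a complete argument needs. There is a genuine gap at Step~2, the only step specific to the Campana setting, and you leave it open. Moreover, the configuration you describe there does not exist. Suppose two components $C_{v_1},C_{v_2}$ of a stable log map meet at a node $q$ of length $\ell_q\neq 0$ and contact order $c_q$. Their positions in the base monoid satisfy $e_{v_2}-e_{v_1}=c_q\ell_q$, and both positions vanish when neither component maps into $\Delta$. Hence $c_q=0$ and $f(q)\notin\Delta$. So $s_0\cup_{q_\ell}T_\ell$ with $q_\ell\in\Delta$ carries no log structure.

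What you need is a degenerate comb:
\begin{itemize}
\item a rational component $R_\ell$ contracted to $q_\ell$;
\item nodes joining $R_\ell$ to $s_0$ and to $T_\ell$, with contact orders $1$ and $c_\ell\geq m_i$;
\item a new marking on $R_\ell$, whose contact order is forced by balancing at $R_\ell$ to be $1+c_\ell\geq m_i$.
\end{itemize}
You then need an unobstructedness statement showing that this degenerate map smooths, with the same discrete data, to a section. Proposition~\ref{prop:gluing} does not provide this, since it only treats nodes over $X^\circ$. Your fallback cannot work either. Contact orders of markings are locally constant in log families, so teeth attached at points of $X^\circ$ never change the transverse contacts of $s_0$ with $\Delta$.

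The order of the steps and the use of the hypotheses also need fixing.
\begin{itemize}
\item \emph{Order.} A free Campana tooth with a marking exactly at $q_\ell$ is available only if $q_\ell$ is a general point of a component of $\Delta\cap X_{b_\ell}$ in a general fibre. Weak approximation for $\underline\pi$ gives no such freedom. You first need a section that moves with its jets at the $p_j$ fixed, so the positivity must be produced \emph{before} the repair. It is also needed to make the degenerate comb unobstructed.
\item \emph{Positivity.} The comb argument raises $N_s$ only along the tangent directions of the teeth at the attachment points. Hassett--Tschinkel therefore use teeth whose directions can be chosen general, i.e.\ very free curves in the fibres. Free Campana teeth give no such control: on a surface fibre, those with $\deg N_f=0$ are rigid through a general point. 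You give no argument that interiority supplies it, and very free Campana curves in $X_b$ are not a hypothesis.
\item \emph{Remedy.} Use very free curves of the rationally connected fibre as teeth. Then remove the transverse contacts they create by the repair step, so everything again rests on Step~2.
\item \emph{Role of interiority.} As the introduction recalls, interiority feeds Step~2. Positivity on every effective divisor of $X_b$ gives, after splitting contact orders \cite[Propositions~2.11 and~5.11]{chen2024campana}, free Campana curves with a marking at a general point of every component of $\Delta\cap X_b$ that the section may meet.
\end{itemize}
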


\begin{corollary}[{\cite[Corollary~6.7]{chen2024campana}}]
    \label{cor:CRC}
    Assume $k$ is algebraically closed of characteristic $0$. Let $(\X,\Delta_\epsilon)$ be a klt Campana orbifold such that $\X$ is rationally connected 
    and $(\X,\Delta_\epsilon)$ is strongly Campana uniruled. Then $(\X,\Delta_\epsilon)$ is Campana rationally connected.
\end{corollary}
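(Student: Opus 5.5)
We plan to run the classical argument that rational chain-connectedness by free curves forces rational connectedness, now inside the category of Campana curves. Here strong Campana uniruledness supplies the free curves, and interiority in $\Nef_1(X)$ rules out a nontrivial quotient.

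\textbf{Step 1 (the family of free curves).} Let $f : C \to X$ be a free Campana curve with $\alpha = f_*[C]$ in the interior of $\Nef_1(X)$. By Proposition \ref{prop:free-equiv}, the component $M$ of the moduli of stable log maps with the contact orders $\varsigma$ of $f$ is smooth of the expected dimension at $[f]$. The deformations of $f$ dominate $\X$, and a general member meets $\Delta$ only at its markings, with the same contact orders. Hence a general member is again a free Campana curve whose image meets the open part $X^\circ = \X \setminus \Delta$.

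\textbf{Step 2 (no nontrivial quotient).} Say that two general points $x, y \in X^\circ$ are equivalent if they can be joined by a chain of general members of $M$ whose consecutive components meet at points of $X^\circ$. Following Campana and Koll\'ar--Miyaoka--Mori, there is a dominant rational quotient map $\phi : \X \dashrightarrow Z$ whose very general fibres are the equivalence classes, and the members of $M$ are contracted by $\phi$. Suppose $\dim Z > 0$. Then the closure of $\phi^* A$, for $A$ an ample divisor on $Z$, is a nonzero effective divisor $D$ on $\X$. Since a general member of $M$ lies in a fibre of $\phi$ and avoids the indeterminacy locus in codimension $\geq 2$, we get $\alpha \cdot D = 0$. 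This contradicts the assumption that $\alpha$ lies in the interior of $\Nef_1(X)$, which means exactly that $\alpha \cdot D > 0$ for every nonzero effective $D$. Hence $Z$ is a point, and two general points of $X^\circ$ are joined by a chain of free Campana curves glued along points of $X^\circ$. Rational connectedness of $\X$ is used to make sense of the quotient and to ensure the chains can be chosen with pairwise distinct gluing points; alternatively, one can first attach a very free curve lying generically in $X^\circ$ through the marking-free region.

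\textbf{Step 3 (from chains to very free).} Apply Proposition \ref{prop:gluing} to such a chain, adding contact-order-$0$ markings at the gluing points, which does not affect freeness. This produces a single free Campana curve $f'$ whose deformations pass through two general points of $\X$. Fix a general point $x$. The subfamily of deformations of $f'$ through $x$ then dominates $\X$. By generic smoothness in characteristic $0$, for a general member the evaluation map at a second marking is smooth, and hence $N_{f'}(-x)$ is globally generated at a general point. Combining this with the splitting type of the vector bundle part of $N_{f'}$ on $\PP^1$, as in Koll\'ar IV.3.9, gives $H^1(N_{f'}(-2)) = 0$. So $f'$ is a very free Campana curve, and Proposition \ref{prop:free-equiv} yields Campana rational connectedness.

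The main obstacle is Step 2. One must construct the quotient for chains of log maps and check two things: that gluing only at points of $X^\circ$ loses no generality, and that the contracted-curve argument still works when the curves carry prescribed tangency along $\Delta$. We will first try to reduce to the classical rationally-chain-connected quotient of $\X$ applied to the underlying images of members of $M$, which form a covering family of rational curves on $\X$. The log structure only re-enters in Step 3, through Proposition \ref{prop:gluing}.
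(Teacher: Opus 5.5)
Your direct Koll\'ar--Miyaoka--Mori style chain argument is a different route from the source's, and as written Step~2 has a genuine gap. The paper gives no proof of this statement; it imports it from \cite{chen2024campana}. There it is deduced from Theorem~\ref{thm:WA} applied to the constant fibration $\X\times\PP^1\to\PP^1$ with boundary $\Delta\times\PP^1$. Rational connectedness of $\X$ gives weak approximation for the underlying constant family at every place. A Campana section through $(x,0)$ and $(y,\infty)$, with $x,y\in\X\setminus\Delta$ general, then projects to a Campana curve through $x$ and $y$.

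\textbf{The gap in Step~2.} You work only with the deformation family $M$ of the given witness $f$, and you assert that a general member avoids the indeterminacy locus of your quotient. This fails when $f$ has a marking whose contact vector is non-zero along two components. Such a marking is forced into a codimension-two stratum, so every member of $M$ passes through a fixed point. Here is an example.
\begin{itemize}
\item Take $\X=\PP^2$, $\Delta=L_1+L_2$, $m_1=m_2=2$, $o=L_1\cap L_2$.
\item Let $f$ be a double cover of a line through $o$, totally ramified over $o$. It has one marking, with contact $(2,2)$.
\item The computation in Lemma~\ref{lem:degree}(2)--(3) does not use $C'\cap\operatorname{Sing}(\Delta)=\emptyset$. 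It shows the torsion-free part of $N_f$ has degree $2\bigl[(3-2)+1-2\bigr]=0$, so $f$ is a free Campana curve. Since $\rho(\PP^2)=1$, $f$ witnesses strong Campana uniruledness.
\item Every member of $M$ is a double cover of a line through $o$. So chains glued at points of $\X\setminus\Delta$ never leave the line $\overline{ox}$.
\item Your quotient is therefore the projection from $o$. Every member passes through its indeterminacy point, and $\alpha\cdot D=2>0$ for a line $D$ through $o$, so there is no contradiction.
\item Chains meeting at $o$ do connect all points, but Proposition~\ref{prop:gluing} cannot glue them there.
\end{itemize}
Your fallbacks do not help either. Rational connectedness of $\X$ only gives chains of arbitrary rational curves. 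A general classical very free curve meets $\Delta$ transversally, with contact order $1<m_i$, so attaching it destroys the Campana condition. The missing step is to first replace $f$ by a free Campana curve of the same class whose markings each meet a single component at a general point. This is what the splitting of contact orders in \cite[Propositions~2.11 and 5.11]{chen2024campana} is for. After that, general members avoid any fixed finite set, and your interiority argument does kill the quotient.

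\textbf{Two further repairs.} First, generate the relation by general chains. Let $V_1(x)$ be the closure of the union of general members through $x$, and $V_{r+1}(x)$ the closure of the union of general members through general points of $V_r(x)$. Then each gluing point is a general point of the previous curve, hence lies in $\X\setminus\Delta$ and is new.

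Second, Proposition~\ref{prop:gluing} only controls the class of the smoothing, not that it passes through two general points. To get that, add contact-order-$0$ markings over $x$ and $y$ and use that the glued map is unobstructed. Then the chain locus lies in the closure of the smooth-domain locus of one component of the proper moduli space. Dominance of the two-pointed evaluation passes from the chains to that component. This is already Definition~\ref{def:campana-uniruled}, so the very-freeness computation in Step~3 is unnecessary.

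Once repaired, your route never uses rational connectedness of $\X$, since the interiority argument implies it. In the source, that hypothesis is exactly what supplies weak approximation for the constant family.
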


\subsection{Campana curves on orbifold}
\label{ss:orbifold-curves}
 
Let $\Delta = \{p_1,\dots,p_n\} \subset \PP^1$ with weights $m_1,\dots,m_n$, so that $(\PP^1,\Delta_\epsilon)$ is a klt Fano orbifold when
$$\sum_{k=1}^n \Bigl(1 - \frac{1}{m_k}\Bigr) \;=\; \operatorname{deg} \Delta_\epsilon \;<\; 2 \;=\; \operatorname{deg}(-K_{\PP^1}),$$
that is, precisely when the orbifold Euler characteristic $2 - \sum_k (1-1/m_k)$ is positive. By \cite[Theorem~7.1]{chen2024campana} such an orbifold 
contains free Campana curves, obtained by composing the quotient map by a suitable finite subgroup of $PGL_2(k)$ (a Belyi map with the prescribed branch 
data) with a general cover of $\PP^1$ of suitable degree. Since $\rho(\PP^1) = 1$, freeness already gives 
strong Campana uniruledness. Lemma \ref{lem:covers} below is the variant of the numerical part of this statement that we use.

\subsection{Del Pezzo surfaces}
\label{ss:dP}
 
Let $\X$ be a del Pezzo surface of degree $d = K_{\X}^2$.
 
\begin{lemma}
\label{lem:dP-model}
For $1 \leq d \leq 7$, $\X$ is the blow up $\beta : \X \to \PP^2$ of $\PP^2$
at $9-d$ points in general position, and $$K_{\X} = -3H + \sum_{i=1}^{9-d} E_i ,$$ where $H = \beta^*\OO_{\PP^2}(1)$ and $E_1, \dots, E_{9-d}$ are the 
exceptional curves. For $d = 8$, either $\X$ is the blow up of $\PP^2$ at one point, with $K_{\X} = -3H + E$, or $\X = \PP^1 \times \PP^1$, with 
$K_{\X} = -2H_1 - 2H_2$ for the two rulings $H_i, i = 1,2$. For $d = 9$, $X = \PP^2$.
\end{lemma}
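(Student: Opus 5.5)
This is the classical classification of smooth del Pezzo surfaces, and we would prove it from the classification of minimal rational surfaces together with the anticanonical degree count; the statement is essentially a citation, so the plan is short.

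\emph{Step 1: $\X$ is rational and its minimal models.} Since $-K_{\X}$ is ample, Kodaira vanishing gives $h^1(\OO_{\X}) = h^2(\OO_{\X}) = 0$, and $-K_{\X}$ ample forces all plurigenera to vanish. Castelnuovo's criterion then shows that $\X$ is rational. Hence $\X$ is obtained from a minimal rational surface, either $\PP^2$ or a Hirzebruch surface $\mathbb{F}_n$, by a sequence of point blow ups. We next use that the strict transform of a $(-n)$-curve has self-intersection at most $-n$, while adjunction and ampleness give $-K_{\X}\cdot C > 0$ for every irreducible curve $C$. Every irreducible curve on a del Pezzo surface therefore has $C^2 \geq -1$. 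This excludes $\mathbb{F}_n$ for $n \geq 2$ from any stage of the process. Moreover $\mathbb{F}_1$ is the blow up of $\PP^2$ at a point.

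\emph{Step 2: reduction to $\PP^2$ or $\PP^1\times\PP^1$.} Suppose $\X \to \PP^1\times\PP^1$ blows up at least one point. Then $\X$ also dominates the blow up of $\PP^1\times\PP^1$ at a point, which is the blow up of $\PP^2$ at two points. So every $\X$ other than $\PP^1\times\PP^1$ is an iterated blow up of $\PP^2$. Infinitely near points are excluded, since they produce the strict transform of an exceptional curve with self-intersection $-2$. Thus $\X$ is the blow up of $\PP^2$ at $r$ distinct points. Since $K^2$ drops by one per blow up, $r = 9-d$, and the formula $K_{\X} = -3H + \sum E_i$ is the standard blow up formula.

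\emph{Step 3: general position and the small cases.} The points must lie in general position: no three on a line, no six on a conic, and no eight on a cubic singular at one of them. Otherwise the corresponding strict transform has self-intersection $\leq -2$ and anticanonical degree $\leq 0$, contradicting ampleness. This also gives $r \leq 8$, since for $r \geq 9$ we would have $K^2 \leq 0$. For $d = 8$ the two possibilities are $r = 1$ and $\PP^1\times\PP^1$, and $K_{\PP^1\times\PP^1} = -2H_1 - 2H_2$ by the canonical bundle of a product. For $d = 9$ we have $r = 0$, and the only minimal model of degree $9$ is $\PP^2$, since $K^2 = 8$ for every $\mathbb{F}_n$.

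The only point requiring care is Step 2: the bookkeeping that $\X$ factors through $\PP^2$ whenever $\X \neq \PP^1\times\PP^1$. This is handled by the elementary transformation identification $\mathrm{Bl}_p(\PP^1\times\PP^1) \cong \mathrm{Bl}_{q_1,q_2}\PP^2$. In the paper we would simply cite the standard references (Manin, Demazure, or Dolgachev's \emph{Classical Algebraic Geometry}, Chapter 8) rather than reprove this.
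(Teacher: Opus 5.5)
Your proposal is correct. It is the standard classical argument: Castelnuovo's criterion, adjunction giving $C^2 \geq -1$ for every irreducible curve (which rules out $\mathbb{F}_n$ with $n\geq 2$, infinitely near points and special position), and the identification $\mathrm{Bl}_p(\PP^1\times\PP^1)\cong\mathrm{Bl}_{q_1,q_2}\PP^2$. The paper gives no proof of this lemma; it records it as the well-known classification of del Pezzo surfaces, so citing Manin, Demazure or Dolgachev, as you suggest, matches what the paper implicitly does.
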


\begin{lemma}
    \label{lem:dP-cones}
    The pseudo-effective cone $\overline{\operatorname{Eff}}(X)$ is rational polyhedral, generated by
    \begin{itemize}
        \item the classes of the $(-1)$-curves, if $1 \leq d \leq 7$;
        \item $E$ and $H-E$, if $X$ is the blow up of $\PP^2$ at one point;
        \item $H_1$ and $H_2$, if $X = \PP^1 \times \PP^1$;
        \item $H$, if $X = \PP^2$.
    \end{itemize}
    Consequently a class $\alpha \in N_1(X)_{\R}$ lies in the interior of $\Nef_1(\X)$ if and only if $\alpha$ has strictly positive intersection 
    with each of the generators listed above, for $1 \leq d \leq 7$, $\alpha \cdot E > 0$ for every $(-1)$-curve $E$.
\end{lemma}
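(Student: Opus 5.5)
The plan is to prove the statement about $\overline{\operatorname{Eff}}(X)$ case by case, following the list in Lemma \ref{lem:dP-model}, and then to deduce the interiority criterion by duality.

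\emph{Low degree.} For $1 \leq d \leq 7$ we use the classical description of effective classes on a blow up of $\PP^2$ in general position. Let $D$ be an irreducible curve on $\X$ that is not a $(-1)$-curve.
\begin{itemize}
\item If $D^2 < 0$, adjunction gives $D^2 + K_{\X}\cdot D = 2p_a(D) - 2 \geq -2$. Ampleness of $-K_{\X}$ gives $K_{\X}\cdot D \leq -1$. Together these force $D^2 = -1$ and $K_{\X}\cdot D = -1$, so $D$ is a $(-1)$-curve, a contradiction.
\item Hence $D^2 \geq 0$, and $D$ is nef.
\end{itemize}
It then suffices to show that every nef class with $D^2 \ge 0$ on $\X$ is a nonnegative combination of $(-1)$-curves. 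For $d \leq 7$ I would write $-K_{\X}$ as a positive combination of $(-1)$-curves. For example, for $d=7$:
$$-K_{\X} = 3H - E_1 - E_2 = 2(H-E_1-E_2) + E_1 + E_2 + \bigl(H - (H - E_1 - E_2)\bigr)\text{-type relations}.$$
Each of $H$, $H - E_i$ is a sum of $(-1)$-curves, e.g.\ $H = (H-E_1-E_2) + E_1 + E_2$. Then $D$ is handled by the Mori cone theorem: $\overline{\operatorname{NE}}(\X)$ is generated by $K$-negative extremal rays. On a surface these are $(-1)$-curves or fibre/plane classes, and the latter do not occur for $d \leq 7$ since $\rho \geq 3$. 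On a surface $\overline{\operatorname{Eff}} = \overline{\operatorname{NE}}$, because curves are divisors. So the cone is spanned by the finitely many $(-1)$-curves, and it is rational polyhedral.

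\emph{Remaining cases.} For $\PP^2$ the cone is the ray spanned by $H$. For $\PP^1\times\PP^1$ the cone is spanned by the two rulings; these are nef with $H_1^2 = H_2^2 = 0$, so they bound the two-dimensional cone. For the one-point blow up, $E$ has $E^2 = -1$ and $H - E$ is a fibre class with $(H-E)^2 = 0$. Every irreducible curve other than $E$ meets both $E$ and $H-E$ nonnegatively, so its class $aH - bE$ satisfies $0 \leq b \leq a$. It is therefore a nonnegative combination $a(H-E) + (a-b)E$, and conversely both generators are effective.

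\emph{Interiority criterion.} This is formal linear algebra. $\Nef_1$ is the dual of a rational polyhedral cone, which is full-dimensional since it contains an ample class. A class is in the interior of the dual cone if and only if it is strictly positive on every nonzero element of the cone, and for a polyhedral cone this reduces to positivity on the generators. The main obstacle is only the justification that no curve of nonnegative self-intersection contributes an extremal ray when $d\le7$. The cone theorem handles this, together with the observation that extremal rays with $D^2 \geq 0$ would give a contraction to a curve or a point of Picard number $\le 2$, which is impossible when $\rho(\X) \geq 3$.
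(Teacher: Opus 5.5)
The paper gives no proof of this lemma. It is recorded in the preliminaries as a classical fact, so there is nothing to match against.

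Your argument through Mori's cone theorem is the standard justification, and it is correct. Since $-K_{\X}$ is ample, Kleiman's criterion gives $\overline{\operatorname{NE}}(\X)_{K_{\X}\geq 0}=0$. Applying the cone theorem with the ample class $-K_{\X}$ and $\varepsilon=\tfrac12$ then shows that $\overline{\operatorname{NE}}(\X)$ is spanned by finitely many $K_{\X}$-negative extremal rays. You should say this explicitly rather than simply writing ``the finitely many $(-1)$-curves.'' Mori's classification of extremal rays on smooth surfaces (e.g.\ Koll\'ar--Mori, Theorem~1.28) shows that a ray spanned by a curve with $C^2\geq 0$ forces $\rho(\X)\leq 2$. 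So for $d\leq 7$, where $\rho(\X)=10-d\geq 3$, every extremal ray is spanned by a $(-1)$-curve. Your direct intersection-number arguments for $\PP^2$, $\PP^1\times\PP^1$ and the one-point blow up are fine, as is the duality argument for the interior of $\Nef_1(\X)$.

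Two cleanups are needed in the low-degree part.
\begin{enumerate}
\item The displayed formula for $d=7$ is wrong as written: read literally, its right-hand side sums to $2H$. The correct identity is $-K_{\X}=3(H-E_1-E_2)+2E_1+2E_2$.
\item The detour before the cone theorem proves nothing toward the claim. That detour consists of showing that negative curves are $(-1)$-curves, reducing to nef classes with $D^2\geq 0$, and writing $-K_{\X}$ as a positive combination of $(-1)$-curves. It does not show that an arbitrary nef class lies in the cone spanned by the $(-1)$-curves; that is exactly what the cone theorem supplies. Had you stopped before invoking the cone theorem, this would have been a genuine gap. As it stands, the detour can simply be deleted.
\end{enumerate}
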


\begin{lemma}
    \label{lem:conic-class}
    Let $F$ be a conic class on a del Pezzo surface $\X$. Then:
    \begin{enumerate}
        \item $F$ is nef.
        \item $h^0(\X,F) = 2$ and $h^i(\X,F) = 0$ for $i > 0$.
        \item $|F|$ is a base point free pencil and the induced morphism $\varphi_{|F|} : \X \to \PP^1$ is a conic bundle whose general fibre is a
        smooth rational curve.
    \end{enumerate}
\end{lemma}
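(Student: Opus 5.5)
The plan is to use Riemann--Roch and Kodaira vanishing for (2), then the Hodge index theorem and adjunction for (1) and (3). Write $\X$ for the del Pezzo surface, so $-K_{\X}$ is ample.

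\emph{Step 1: vanishing and $h^0$.} Write $F - K_{\X} = F + (-K_{\X})$. Kodaira vanishing gives $h^i(\X,F)=0$ for $i>0$ once $F-K_{\X}$ is ample, so this is what we establish first.

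Since $F$ is effective and nonzero with $-K_{\X}\cdot F = 2 > 0$, and $-K_{\X}$ is ample, we check positivity of $(F-K_{\X})^2$ and of $(F-K_{\X})\cdot C$ for curves $C$. We have $(F-K_{\X})^2 = 0 + 4 + d > 0$. For an irreducible curve $C$ we need $F\cdot C + (-K_{\X}\cdot C) > 0$. The only danger is a component $C$ of a member of $|F|$ with $F\cdot C<0$.

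To avoid circularity, we instead argue vanishing directly. First, $h^2(F)=h^0(K_{\X}-F)=0$, since $(K_{\X}-F)\cdot(-K_{\X}) = -d-2<0$. Riemann--Roch gives
\[
\chi(F) = 1 + \tfrac12 F\cdot(F-K_{\X}) = 1 + \tfrac12(0+2) = 2,
\]
so $h^0(F)\ge 2$.

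\emph{Step 2: nefness.} If $F$ has a fixed part, write $|F| = |M| + Z$ with $M$ mobile and $Z>0$ fixed. Then $M$ is nef, so $M^2\ge 0$, and $-K_{\X}\cdot M\ge 1$ and $-K_{\X}\cdot Z\ge 1$ by ampleness. Since $-K_{\X}\cdot F=2$, this forces $-K_{\X}\cdot M = 1$. Adjunction and Hodge index then force $M^2\le 1$. Moreover $M^2=1$ would give $p_a(M) = 1+\tfrac12(M^2+K_{\X}\cdot M) = 1$, so $M$ is not a pencil of rational curves.

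A cleaner route to nefness is as follows. Any irreducible curve $C$ with $F\cdot C<0$ is a fixed component of $|F|$ and satisfies $C^2<0$. On a del Pezzo surface this forces $C$ to be a $(-1)$-curve, by adjunction: $C^2<0$ and $-K_{\X}\cdot C>0$ give $C^2=-1$ and $-K_{\X}\cdot C=1$. Write $F = aC + R$ with $a\ge 1$ and $C\not\subset R$. Then $-K_{\X}\cdot R = 2-a$, so $a\in\{1,2\}$.

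If $a=2$, then $R=0$ and $F=2C$, giving $F^2=-4\ne 0$, a contradiction.

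If $a=1$, then $R$ is an effective curve with $-K_{\X}\cdot R=1$, so $R$ is irreducible. Hodge index then gives $R^2\le 1$ with $R^2\in\{-1,1\}$, by parity of $R^2+K_{\X}\cdot R$. In this case
\[
F^2 = C^2 + 2C\cdot R + R^2 = -1 + 2C\cdot R + R^2 = 0.
\]
Also $F\cdot C = -1 + C\cdot R<0$ forces $C\cdot R=0$, which gives $R^2=1$. But then $R$ is irreducible of arithmetic genus $1$ with $R^2=1$ and $-K_{\X}\cdot R=1$, and Hodge index gives $(K_{\X}\cdot R)^2 \ge K_{\X}^2 R^2 = d$. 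This forces $d=1$ and $R \equiv -K_{\X}$. Then $F = C - K_{\X}$ and $F\cdot C = -1+1 = 0$, contradicting $F\cdot C<0$. Hence $F$ is nef. This case analysis is the main obstacle, and I would present it exactly this way.

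\emph{Step 3: finishing (2) and (3).} With $F$ nef, $F-K_{\X}$ is ample: it is nef plus ample. Kodaira vanishing then gives $h^1(F)=h^2(F)=0$, so $h^0(F)=\chi(F)=2$.

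Now $F$ is nef with $F^2=0$, so $|F|$ has no base points outside fixed components. A fixed part is excluded by the argument of Step 2 applied to the mobile part: $M^2=0$ would force $F=M+Z$ with $-K_{\X}\cdot M=1$, and then $M^2+K_{\X}\cdot M$ is odd, which is impossible. Hence $|F|$ is a pencil without fixed part and with $F^2=0$, so it is base point free.

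The morphism $\varphi_{|F|}:\X\to\PP^1$ has fibres in $|F|$, with connected general fibre since $h^0(F)=2$. Its general fibre is smooth by generic smoothness in characteristic $0$, or by Bertini. Adjunction gives
\[
2p_a - 2 = F^2 + K_{\X}\cdot F = -2,
\]
so the general fibre is a smooth rational curve.
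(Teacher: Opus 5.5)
Your overall route is the paper's: the $(-1)$-curve case analysis for nefness, Riemann--Roch plus Kodaira vanishing for (2), and a fixed/mobile decomposition together with $F^2=0$ for (3). The nefness argument is correct. Your treatment of $a=1$ is tidier than the paper's split into $d\neq1$ and $d=1$: from $F\cdot C<0$ you get $C\cdot R=0$, then $F^2=0$ gives $R^2=1$, and the equality case of Hodge index gives $d=1$ and $R\equiv -K_{\X}$.

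The gap is in ruling out a fixed part $Z\neq0$ in (3). Your parity contradiction needs $M^2=0$, and you never prove it. The first paragraph of Step 2 only gives $0\le M^2\le 1$. Since $-K_{\X}\cdot M=1$, parity ($M^2+K_{\X}\cdot M=M^2-1$ even) leaves exactly the case $M^2=1$. You dismiss that case with ``$p_a(M)=1$, so $M$ is not a pencil of rational curves'', which is not a contradiction. Nothing forces the mobile part to consist of rational curves; on a del Pezzo surface of degree $1$, $|-K_{\X}|$ is a pencil with $M^2=1$, $-K_{\X}\cdot M=1$ and $p_a=1$. So the contradiction must come from $F^2=0$ and nefness, not from the shape of $M$.

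The missing line is the one the paper uses. Since $F$ is nef, $0=F^2=F\cdot M+F\cdot Z$ with both terms $\ge0$. Hence $F\cdot M=0$, and $M^2=F\cdot M-M\cdot Z=-M\cdot Z\le0$, so $M^2=0$. Only then does $-K_{\X}\cdot M=1$ give the parity contradiction (equivalently $p_a(M)=\tfrac12$). With this step inserted, and the abandoned detours removed (the ampleness check in Step 1 and the first paragraph of Step 2), the proof is complete.
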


\begin{proof}
    (1) Suppose $F \cdot C < 0$ for an irreducible curve $C$. Then $C \subseteq \operatorname{Supp} F$ and $C^2 < 0$, so $C$ is a $(-1)$-curve. 
    Write $F = aC + F'$ with $a \geq 1$ and $F'$ effective not containing $C$. From $2 = -K_{\X} \cdot F = a + (-K_{\X} \cdot F')$ and 
    $-K_{\X} \cdot F' \geq 0$ we get $a \leq 2$. If $a = 2$ then $-K_{\X} \cdot F' = 0$, so $F' = 0$ and $F^2 = 4C^2 = -4 \neq 0$, a contradiction. 
    If $a = 1$ then $-K_{\X} \cdot F' = 1$, so when $\X$ is a del Pezzo surface of degree $d \neq 1$, $F'$ is a $(-1)$-curve $C'$. 
    From $0 = F^2 = -1 + 2 C\cdot C' - 1$ we get $C \cdot C' = 1$ and hence $F \cdot C = -1 + 1 = 0$, again a contradiction. 
    When $\X$ is a del Pezzo surface of degree $1$, an irreducible curve with $-K_{\X} \cdot F' = 1$ can be a curve $C' \in |-K_{\X}|$ in the anticanonical class with 
    $C'^2 = 1$. Hence $C \cdot C' = 1$ and $F^2 = C^2 + 2C \cdot C' + C'^2 = 2 \neq 0$, again a contradiction.  
     
    (2) By Riemann-Roch theorem $\chi(\OO_{\X}(F)) = 1 + \tfrac12 F\cdot(F - K_{\X}) = 2$, and $h^i(\X,F) = h^i(\X, K_{\X} + (F - K_{\X})) = 0$ 
    for $i > 0$ by Kodaira vanishing, since $F - K_{\X}$ is ample by (1).
     
    (3) Write $|F| = |M| + Z$ with $Z$ the fixed part. As $F$ is nef and $F^2 = 0$ we get $F \cdot M = F \cdot Z = 0$, hence 
    $M^2 = M\cdot F - M \cdot Z = -M\cdot Z \leq 0$, since $M$ is movable part, $M^2 \geq 0$, so $M^2 = 0$ and $M \cdot Z = 0$. If $Z \neq 0$ then 
    $-K_{\X} \cdot Z \geq 1$ and $-K_{\X} \cdot M \geq 1$, so $-K_{\X} \cdot M = 1$ and $p_a(M) = 1 + \tfrac12 (M^2 + K_{\X}\cdot M) = \tfrac12$, 
    which is a contradiction. Hence $Z = 0$. Two general members of the pencil $|F|$ meet only along the base locus, and $F^2 = 0$, so $|F|$ is base 
    point free. Finally $p_a(F) = 1 + \tfrac12(F^2 + K_{\X} \cdot F) = 0$, so by generic smoothness the general member of $|F|$ is a smooth rational curve.
\end{proof}

Finally we record the case $d = 9$ of Theorem \ref{thmD}, which appears in a unpublished notes by Chen, Tanimoto and Lehmann. It is imediate because 
$\operatorname{Pic}(\PP^2) = \Z H$.
 
\begin{proposition}
    \label{prop:P2}
    Let $\X = \PP^2$, let $D$ be a smooth irreducible curve of degree $t$ and let
    $\epsilon = 1-1/m$ with $m \geq 2$. Then $-(K_{\X} + \epsilon D) = (3 -\epsilon t) H$, and
    therefore:
    \begin{enumerate}
        \item if $t \leq 3$, then $-(K_{\X} + \epsilon D)$ is ample for every $m \geq 2$.
        \item if $t = 4$, then $-(K_{\X} + \epsilon D)$ is ample exactly for $\epsilon = \tfrac12, \tfrac23$.
        \item if $t = 5$, then $-(K_{\X} + \epsilon D)$ is ample exactly for $\epsilon = \tfrac12$.
        \item if $t = 6$, then $-(K_{\X} + \epsilon D)$ is nef, and trivial, exactly for $\epsilon = \tfrac12$, and is never ample.
        \item if $t \geq 7$, then $-(K_{\X} + \epsilon D)$ is never nef.
    \end{enumerate}
\end{proposition}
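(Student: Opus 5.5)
The plan is to compute everything in the Picard group, which is $\Z H$. On $\PP^2$ we have $K_{\X} = -3H$, and a curve of degree $t$ satisfies $D \sim tH$. Therefore
$$-(K_{\X} + \epsilon D) = 3H - \epsilon t H = (3 - \epsilon t)H.$$
Since $H$ is ample and generates $\operatorname{Pic}(\PP^2)$, a multiple $cH$ with $c \in \Q$ is ample exactly when $c > 0$, nef exactly when $c \geq 0$, and trivial exactly when $c = 0$. So the whole statement reduces to the sign of $3 - \epsilon t$, where $\epsilon = 1 - 1/m$ ranges over $\{\tfrac12, \tfrac23, \tfrac34, \dots\}$ for $m \geq 2$. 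In particular $\tfrac12 \leq \epsilon < 1$.

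The case analysis runs as follows.

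\begin{enumerate}
    \item For $t \leq 3$: since $\epsilon < 1$, we get $\epsilon t < 3$, so the divisor is ample for every $m \geq 2$.
    \item For $t = 4$: ampleness means $\epsilon < \tfrac34$. Among the admissible weights this holds exactly for $m = 2, 3$, i.e.\ $\epsilon = \tfrac12, \tfrac23$. The value $m = 4$ gives $\epsilon = \tfrac34$ and hence the trivial class, which is not ample.
    \item For $t = 5$: ampleness means $\epsilon < \tfrac35$, which forces $\epsilon = \tfrac12$ since $\tfrac23 > \tfrac35$.
    \item For $t = 6$: the coefficient is $3 - 6\epsilon \leq 0$, with equality iff $\epsilon = \tfrac12$. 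So the divisor is nef (indeed trivial) exactly then, and never ample.
    \item For $t \geq 7$: the coefficient satisfies $3 - \epsilon t \leq 3 - \tfrac72 < 0$, so the divisor is never nef.
\end{enumerate}

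There is no genuine obstacle here. The only points requiring care are that the admissible weights form the discrete set $\{1 - 1/m\}$, with minimum $\tfrac12$ and supremum $1$ (not attained), and that smoothness of $D$ plays no role beyond fixing its class as $tH$.
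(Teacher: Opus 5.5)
Your proof is correct and matches the paper's reasoning: the paper gives no separate proof, only the remark that the statement is immediate because $\operatorname{Pic}(\PP^2) = \Z H$. Everything then reduces to the sign of $3 - \epsilon t$ over the discrete weights $\epsilon = 1 - 1/m \in [\tfrac12, 1)$, and your case-by-case check of that sign is accurate.
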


\section{Campana Rational Connectedness}\label{sec:mainthm}

Throughout this section $k$ is an algebraically closed field of characteristic $0$. We keep the notation of Section 2: $(X, \Delta_{\epsilon})$ is a klt Campana orbifold,
$\X$ denotes the underlying smooth projective surface, $\Delta = \sum_i \Delta_i$ is the reduced SNC boundary, $\epsilon_i = 1 - 1/m_i$ with $m_i \in \Z_{\geq 1}$, and 
$X$ is the associated log scheme. We write $T_X$ for the logarithmic tangent bundle and $T_{\X}$ for the classical one.

The proof of the main theorems is organised as follows. In \S\ref{ss:lemmas} we record two elementary facts that control the construction of a free Campana curve: 
ramified covers of $\PP^1$ (Lemma~\ref{lem:covers}), the degree of the normal bundle of a Campana curve obtained by base change (Lemma~\ref{lem:degree}).
In \S\ref{ss:construction} we produce Campana curves from the conic bundle structures on $\X$; we show that the ampleness of $-(K_{\X} + \Delta_{\epsilon})$ is exactly the inequality 
required for a suitable cover of $\PP^1$ to exist. The main theorems are proved in \S\ref{ss:proofs}. 

\subsection{Lemmas}\label{ss:lemmas}

\begin{lemma}\label{lem:covers}
    Let $p_1, \dots, p_n \in \PP^1$ be distinct closed points and let $w_1, \dots, w_n \in \Z_{\geq 1}$. Consider finite morphisms $g : \PP^1 \to \PP^1$
    such that \begin{equation}\label{eq:ram}
        e_q \geq w_k \qquad \text{for every } 1\leq k\leq n \text{ and every } q \in g^{-1}(p_k),
    \end{equation}
    where $e_q$ denotes the ramification index of $g$ at $q$. Then:
    \begin{enumerate}
        \item such a $g$ exists if and only if $\displaystyle\sum_{k=1}^{n}\Bigl(1-\frac{1}{w_k}\Bigr) < 2$;
        \item if moreover $n \neq 1$, then $g$ can be chosen to be \'etale over $\PP^1 \setminus \{p_1, \dots, p_n\}$.
    \end{enumerate}
\end{lemma}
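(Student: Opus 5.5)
The plan is to prove necessity in (1) by Riemann--Hurwitz, and to prove sufficiency together with (2) by exhibiting explicit Galois covers $\PP^1 \to \PP^1$ whose quotient maps are branched over at most three of the points $p_k$.

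\emph{Necessity.} Suppose $g$ has degree $d \geq 1$ and satisfies \eqref{eq:ram}. Since $e_q \geq w_k$ for every $q \in g^{-1}(p_k)$, we get $\bigl|g^{-1}(p_k)\bigr| \leq d/w_k$. Riemann--Hurwitz in characteristic $0$ then gives
\[
-2 \;=\; -2d + \sum_{q}(e_q - 1) \;\geq\; -2d + \sum_{k=1}^{n}\Bigl(d - \bigl|g^{-1}(p_k)\bigr|\Bigr) \;\geq\; -2d + d\sum_{k=1}^{n}\Bigl(1-\frac{1}{w_k}\Bigr).
\]
Dividing by $d$ yields $\sum_k (1-1/w_k) \leq 2 - 2/d < 2$.

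\emph{Reduction for sufficiency.} Points with $w_k = 1$ impose no condition in \eqref{eq:ram}. So I discard them and let $p_1,\dots,p_r$ be the points with $w_k \geq 2$. Each such point contributes at least $\tfrac12$ to the sum, so the strict inequality forces $r \leq 3$. For $r = 3$, the elementary enumeration of $\tfrac1a + \tfrac1b + \tfrac1c > 1$ shows that, up to order, $(w_1,w_2,w_3)$ is $(2,2,m)$, $(2,3,3)$, $(2,3,4)$ or $(2,3,5)$. Every cover I build will be branched only over some of $p_1,\dots,p_r$. It is therefore automatically étale over all the discarded points, and hence over $\PP^1 \setminus \{p_1,\dots,p_n\}$, which is what (2) asks for.

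\emph{Construction by cases.}
\begin{itemize}
\item If $r = 0$, I take $g = \mathrm{id}$.
\item If $r \leq 2$, I choose a coordinate sending $p_1 \mapsto 0$ and (if $r = 2$) $p_2 \mapsto \infty$. I then take $g(z) = z^N$ with $N = \operatorname{lcm}(w_1, w_2)$, or $N = w_1$ when $r = 1$. This map is totally ramified of index $N \geq w_k$ over $0$ and $\infty$ and étale elsewhere, so \eqref{eq:ram} holds.
\item If $r = 3$, I use the quotient map $\PP^1 \to \PP^1/G \cong \PP^1$ by the finite subgroup $G \subset \mathrm{PGL}_2(k)$ that is dihedral of order $2m$, tetrahedral, octahedral or icosahedral respectively. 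By Klein's classification, this map is Galois with exactly three branch points and ramification indices exactly $(2,2,m)$, $(2,3,3)$, $(2,3,4)$, $(2,3,5)$ over them. Since $\mathrm{PGL}_2(k)$ acts $3$-transitively on $\PP^1$, composing with a Möbius transformation moves the branch points onto $p_1,p_2,p_3$ in the required order.
\end{itemize}
Because the cover is Galois, every point over $p_k$ has the same index, equal to $w_k$, so \eqref{eq:ram} holds. In every case with $n \neq 1$ the resulting $g$ is étale outside $\{p_1,\dots,p_n\}$. (In fact this holds for all $n$; the exclusion of $n = 1$ in (2) is harmless.)

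The only substantive input is the existence of the polyhedral covers with the stated branching data. I would cite Klein's classification of finite subgroups of $\mathrm{PGL}_2$ in characteristic $0$, or equivalently \cite[Theorem~7.1]{chen2024campana}, rather than reprove it. I expect the main obstacle to be stating cleanly that the branch data are exactly the spherical triples, so that the case split covers every solution of the inequality with no gaps.
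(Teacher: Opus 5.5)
Your approach is the paper's: Riemann--Hurwitz for necessity, then reduction to the at most three points with $w_k\ge2$, power maps $z^N$ when there are at most two such points, and the dihedral/polyhedral quotients for the spherical triples. Both directions of (1) are correct as you wrote them.

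Your argument for (2), however, contains a false step. You claim that every cover you build is branched only over some of $p_1,\dots,p_r$. This fails for $r=1$, because $z^{w_1}$ is totally ramified over $\infty$ as well as over $0=p_1$. Hence your parenthetical claim that (2) holds for all $n$ is wrong. If $n=1$ and $w_1\ge2$, any $g$ of degree $d$ that is \'etale over $\PP^1\setminus\{p_1\}$ satisfies $2d-2=d-\#g^{-1}(p_1)\le d-1$ by Riemann--Hurwitz. So $d=1$ and $e_q=1<w_1$, which is exactly why the hypothesis $n\neq1$ is there.

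For the same reason, when $r=1<n$ (one point with $w_k\ge2$ and at least one discarded point with $w_k=1$), your $g$ as written need not be \'etale over $\PP^1\setminus\{p_1,\dots,p_n\}$. The fix is immediate: in that case choose the coordinate so that one of the discarded points goes to $\infty$. Then $z^{w_1}$ is branched only over two of the $p_k$, and the condition at $\infty$ is vacuous since its weight is $1$. The paper's proof also reduces to $w_k\ge2$ at the outset and passes over this sub-case silently, but it does not assert (2) for $n=1$.
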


\begin{proof}
    Let $g$ be of degree $N$ and satisfy \eqref{eq:ram}, and put $n_k = \#g^{-1}(p_k)$. Then $n_k \leq N/w_k$, and the Riemann-Hurwitz formula
    gives $$ 2N-2 \;=\; \sum_{q}(e_q-1) \;\geq\; \sum_{k = 1}^n \bigl(N-n_k\bigr) \;\geq\; N\sum_{k = 1}^n \Bigl(1 - \frac{1}{w_k}\Bigr),$$
    whence $\sum_k (1 - 1/w_k) \leq 2 - 2/N <2$.

    Conversely, assume $\sum_k(1 - 1/w_k) < 2$. Without loss of generality we can assume $w_k\geq 2$ for all $k$; then $n \leq 3$, since $n \geq 4$ would imply 
    $\sum_k (1 - 1/w_k) \geq n/2 \geq 2$. If $n = 0$ take $g = \mathrm{id}$. If $n = 1$ take 
    $g(z) = z^{w_1}$ with $p_1 = 0$. If $n = 2$ take $g(z) = z^w$ with $w = \operatorname{lcm}(w_1, w_2)$ and $p_1 = 0$, $p_2 = \infty$; 
    this $g$ is \'etale outside $\{0,\infty\}$. If $n = 3$, the inequality gives $1/w_1 + 1/w_2 + 1/w_3 > 1$, whose solutions are, up to permutation,
    $$(2,2,a)\ (a\geq 2), \qquad (2,3,3), \qquad (2,3,4), \qquad (2,3,5).$$
    Let $G \subset \mathrm{PGL}_2(k)$ be the corresponding finite subgroup (dihedral of order $2a$, $\mathfrak{A}_4$, $\mathfrak{S}_4$, $\mathfrak{A}_5$) 
    and let $g_0 : \PP^1 \to \PP^1 \cong \PP^1/G$ be the quotient map. Then $g_0$ is \'etale outside three points, over which the ramification indices are constant equal to
    $w_1,w_2,w_3$ respectively; composing with an automorphism of the image which carries the three branch points to $p_1,p_2,p_3$ gives the required $g$.
\end{proof}

\begin{remark}\label{rem:no-total-ramification}
    If $w_k = m$ for all $k$, By Lemma~\ref{lem:covers} a cover of $\PP^1$ with all contact orders at $p_1, \dots, p_n$ at least $m$ exists 
    if and only if $n\, \epsilon < 2$, where $\epsilon = 1 - 1/m$; in particular $n \leq 2$ for $m \geq 3$, and $n \leq 3$ for $m = 2$.  
    For $w_k = m$ and $e_q = m$ (total ramification) the bound is $n \leq 2$. No choice of the degree of $g$ improves this since 
    the domain of a cover with $n\,\epsilon \geq 2$ has positive genus. Hence our construction starts from curves meeting the boundary in a controlled
    number of points.
\end{remark}

Recall \cite[Definition~2.5]{chen2024campana} that, a log map $f: C\to X$ over a geometric log point is a \emph{log immersion} if every node 
and every marking of $C$ has contact order $\mathbf{c}\neq 0$ with $\operatorname{char}\mathbf{k} \nmid \mathbf{c}$, and 
if $\underline{f}$ is an immersion away from the nodes and markings. In characteristic $0$ the divisibility requirement is vacuous by
\cite[Notation~2.3]{chen2024campana}, so the first condition simply says that all nodes and markings map into $\Delta$ with non-zero contact order.  
By \cite[Lemma~2.6]{chen2024campana}, if $f$ is a log immersion then $df\colon T_{C/S}\to f^*T_X$ is the inclusion of a subbundle, so that
$N_f=\operatorname{Cok}(df)$ is a vector bundle on $C$.

\begin{lemma}\label{lem:degree}
    Let $(X,\Delta_\epsilon)$ be a klt Campana orbifold with $\dim\X=2$ and let
    $C'\subset\X$ be a smooth rational curve with $C'\not\subset\Delta$ and
    $C'\cap\operatorname{Sing}(\Delta)=\emptyset$.  Denote by
    $h\colon\underline{C}'\cong\PP^1\hookrightarrow\X$ the inclusion, write
    \[
    h^*\Delta \;=\; \sum_{k=1}^{n}\mu_k\,p'_k ,\qquad
    \mu_k\geq1,\quad \sum_{k=1}^{n}\mu_k=\Delta\cdot C' ,
    \]
    with $p'_1,\dots,p'_n$ distinct, and let $\Delta_{i(k)}$ be the unique component
    of $\Delta$ through $h(p'_k)$.  Let $g\colon\underline{C}\cong\PP^1\to
    \underline{C}'$ be a finite morphism of degree $N$ such that
    \begin{equation}\label{eq:contact}
    e_q\,\mu_k \;\geq\; m_{i(k)}\qquad\text{for every $k$ and every }
    q\in g^{-1}(p'_k).
    \end{equation}
    Regard $C'$ as a log curve over $S$ by means of the markings
    $p'_1,\dots,p'_n$, and $C$ as a log curve over $S$ by means of the markings
    $g^{-1}(\{p'_1,\dots,p'_n\})$ together with a (possibly empty) finite set $P$ of
    further points whose images avoid $\Delta$.  Set $f:=h\circ g\colon C\to X$ and
    \[
    \delta \;:=\; N\Bigl[\bigl(-K_{\X}-\Delta\bigr)\cdot C'+n-2\Bigr].
    \]
    Then:
    \begin{enumerate}
        \item $f$ is a Campana curve for $(X,\Delta_\epsilon)$, with contact order
        $c_q=e_q\mu_k$ at $q\in g^{-1}(p'_k)$ and $c_q=0$ at the markings in $P$;
        moreover $f_*[C]=N[C']$;
        \item $h\colon C'\to X$ is a log immersion, and $N_h$ is a line bundle on
        $\PP^1$ of degree $(-K_{\X}-\Delta)\cdot C'+n-2$;
        \item $df$ is injective, $N_f=\operatorname{Cok}(df)$, and there is an exact sequence
        \[
        0\longrightarrow N_g\longrightarrow N_f\longrightarrow
        g^*N_h\longrightarrow 0,\qquad N_g:=\operatorname{Cok}(dg),
        \]
        in which $N_g$ is a torsion sheaf; in particular the torsion subsheaf of
        $N_f$ is $N_g$ and its torsion-free quotient is the line bundle
        $g^*N_h\cong\OO_{\PP^1}(\delta)$;
        \item $H^1(N_f(-1))=0$ if and only if $\delta\geq0$, i.e.\ if and only if
          \[
             \bigl(-K_{\X}-\Delta\bigr)\cdot C'+n\;\geq\;2 .
          \]
    \end{enumerate}
    If moreover $P=\emptyset$ and $g$ is \'etale over
    $\underline{C}' \setminus \{p'_1,\dots,p'_n\}$, then $N_g=0$ and
    $N_f\cong\OO_{\PP^1}(\delta)$ is a line bundle.
\end{lemma}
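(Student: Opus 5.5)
The plan is to prove the four items in order, working locally at each point of $C'$ and then globalising through determinants of the log tangent bundles.

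For (1), I will compute contact orders directly. At $q\in g^{-1}(p'_k)$ the component $\Delta_{i(k)}$ is the only component through $h(p'_k)$, because $C'\cap\operatorname{Sing}(\Delta)=\emptyset$. The multiplicity of $f^*\Delta_{i(k)}$ at $q$ is therefore $e_q\cdot\operatorname{mult}_{p'_k}h^*\Delta_{i(k)}=e_q\mu_k$. Every other component has contact order $0$ at $q$, and all components have contact order $0$ at the points of $P$. By \eqref{eq:contact} the nonzero contact orders are at least $m_{i(k)}$, so $f$ is a Campana curve. The relation $f_*[C]=N[C']$ is just $\deg g=N$ together with the fact that $h$ is an embedding.

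For (2), $h$ is a closed immersion, and every marking $p'_k$ has contact order $\mu_k\ne0$, so $h$ is a log immersion. Hence $N_h$ is a vector bundle by \cite[Lemma~2.6]{chen2024campana}, of rank $2-1=1$. Its degree comes from the determinant:
\[
\deg N_h=\deg h^*T_X-\deg T_{C'/S}.
\]
Here $\deg h^*T_X=(-K_{\X}-\Delta)\cdot C'$ by \eqref{eq:log-tangent}. The log tangent bundle of $\PP^1$ with $n$ markings is $\OO(2-n)$, so $\deg T_{C'/S}=2-n$, and the formula follows.

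For (3), I will factor $df=g^*(dh)\circ dg$, using $g^*T_{C'/S}\to g^*h^*T_X$. Since $g$ is log (markings map to markings), $dg\colon T_{C/S}\to g^*T_{C'/S}$ is a map of line bundles. It is generically an isomorphism in characteristic $0$, so it is injective with torsion cokernel $N_g$. Next, $g^*(dh)$ is a subbundle inclusion with cokernel $g^*N_h$. The snake lemma, or the octahedral axiom, applied to the composite of two injections gives
\[
0\to N_g\to N_f\to g^*N_h\to0 .
\]
Because $g^*N_h$ is locally free, $N_g$ is exactly the torsion of $N_f$, and $\deg g^*N_h=N\deg N_h=\delta$.

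Adding the points of $P$ deserves care. It changes $T_{C/S}$ to $T_{C/S}(-P)$, so $N_g$ acquires torsion at $P$, consistent with \cite[Corollary~2.7]{chen2024campana}. The target $g^*T_{C'/S}$ is unaffected. I expect the main obstacle to be this bookkeeping: checking that $dg$ is injective between log tangent sheaves with these marking conventions. Locally at a marked point the map sends $z\partial_z\mapsto e_q\,w\partial_w$, which is an isomorphism. At an unmarked ramification point it sends $\partial_z\mapsto e_q z^{e_q-1}\partial_w$, contributing torsion of length $e_q-1$.

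For (4), torsion has no $H^1$ on a curve, so the long exact sequence gives $H^1(N_f(-1))\cong H^1(\OO(\delta-1))$. This vanishes if and only if $\delta-1\ge-1$, that is, $\delta\ge0$, and since $N\ge1$ this is equivalent to the displayed inequality. For the final claim: if $P=\emptyset$ and $g$ is étale off the markings, the local computation shows $dg$ is an isomorphism everywhere. Then $N_g=0$ and $N_f\cong\OO(\delta)$.
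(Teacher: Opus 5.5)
Your proposal is correct and follows the paper's proof step by step: the same contact-order computation for (1), the same determinant count of $\deg N_h$ for (2), and the factorisation $df=g^*(dh)\circ dg$ giving $g^*N_h$ as an extension by the torsion sheaf $N_g$ for (3), with the identical cohomology argument for (4). The only minor differences are these. You prove $N_g=0$ in the final claim from the local form of $dg$, where the paper uses the Riemann--Hurwitz count $2-r=N(2-n)$. You also explicitly account for the extra markings $P$, under which $T_{C/S}$ is twisted down by $P$ and $g$ is not strict; the paper passes over this point.
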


\begin{proof}
    (1)  Let $q\in g^{-1}(p'_k)$.  The multiplicity of $f^*\Delta_{i(k)}$ at $q$ is
    $e_q\mu_k$, which is $\geq m_{i(k)}$ by \eqref{eq:contact}, and the
    multiplicities of $f^*\Delta_i$ at $q$ vanish for $i\neq i(k)$ because
    $h(p'_k)\notin\operatorname{Sing}(\Delta)$; at a marking in $P$ all
    multiplicities vanish.  Hence $f$ is a Campana curve in the sense of
    Definition~\ref{def:campana-curve}.  Since $h$ is a closed immersion,
    so $f_*[C]=\deg(g)\cdot[C']=N[C']$.
     
    (2)  We check \cite[Definition~2.5]{chen2024campana} for $h$.  The domain
    $\underline{C}'$ is smooth, so $h$ has no nodes; its markings are
    $p'_1,\dots,p'_n$, and the contact order of $h$ at $p'_k$ is $\mu_k\geq1$ along
    $\Delta_{i(k)}$ and $0$ along the other components, hence non-zero, and the
    divisibility condition is vacuous as $\operatorname{char}\mathbf{k}=0$.  Finally
    $\underline{h}$ is a closed immersion, hence an immersion everywhere.  So $h$ is
    a log immersion, and by \cite[Lemma~2.6]{chen2024campana} the map
    $dh\colon T_{C'/S}\to h^*T_X$ is the inclusion of a subbundle and
    $N_h=\operatorname{Cok}(dh)$ is a vector bundle of rank
    $\operatorname{rk}(h^*T_X)-1=1$, i.e.\ a line bundle on $\PP^1$.
     
    For its degree, observe that $\deg h^*T_X=\deg\det h^*T_X=\deg h^*\det T_X$,
    which by the exact sequence \eqref{eq:log-tangent} equals $(-K_{\X}-\Delta)\cdot C'$.  Since
    $C'\to S$ is a log curve with $n$ markings over the standard log point,
    $\Omega^1_{C'/S}=\omega_{\underline{C}'}(p'_1+\dots+p'_n)$ and therefore
    $T_{C'/S}\cong\OO_{\PP^1}(2-n)$.  Hence
    \[
    \deg N_h \;=\; \deg h^*T_X-\deg T_{C'/S}
    \;=\;\bigl(-K_{\X}-\Delta\bigr)\cdot C'-(2-n).
    \]
     
    (3)  Every point of $g^{-1}(\{p'_1,\dots,p'_n\})$ is a marking of $C$, so the
    log structure of $C$ is the pullback of the log structure of $C'$ and $g$
    is a morphism of log curves over $S$. Let $dg\colon T_{C/S}\to g^*T_{C'/S}$ be
    the induced map.  Both sheaves are line bundles on $\PP^1$, and $dg$ is non-zero
    because $g$ is separable, hence generically log \'etale. Therefore $dg$ is
    injective with torsion cokernel $N_g$.  As $g$ is finite flat, applying $g^*$ to
    $0\to T_{C'/S}\to h^*T_X\to N_h\to0$ gives the exact sequence
    \[
    0\longrightarrow g^*T_{C'/S}\longrightarrow f^*T_X\longrightarrow
    g^*N_h\longrightarrow0 ,
    \]
    so $g^*T_{C'/S}$ is a subbundle of $f^*T_X$.  By functoriality of the
    logarithmic differential, $df=g^*(dh)\circ dg$. In particular $df$ is injective,
    the normal complex $N_f$ is the sheaf $\operatorname{Cok}(df)=f^*T_X/T_{C/S}$, and we have
    inclusions of $\OO_C$-modules
    \[
    T_{C/S}\;\subseteq\;g^*T_{C'/S}\;\subseteq\;f^*T_X .
    \]
    The associated exact sequence
    $0\to g^*T_{C'/S}/T_{C/S}\to f^*T_X/T_{C/S}\to f^*T_X/g^*T_{C'/S}\to0$
    is the asserted one.  Since $g^*N_h$ is locally free and $N_g$ is torsion, $N_g$
    is the torsion subsheaf of $N_f$ and $g^*N_h$ is its torsion-free quotient. Its
    degree is $N\deg N_h=\delta$ by (2), so $g^*N_h\cong\OO_{\PP^1}(\delta)$.
     
    (4)  Twisting the exact sequence of (3) by $\OO_{\PP^1}(-1)$ and by vanishing of
    $H^1$ of a torsion sheaf and $H^2$ of a sheaf on a curve, we
    get $H^1(N_f(-1))\cong H^1(\OO_{\PP^1}(\delta-1))$, which vanishes if and only
    if $\delta\geq0$.  As $N\geq1$, this is equivalent to
    $(-K_{\X}-\Delta)\cdot C'+n\geq2$.
     
    Finally assume $P=\emptyset$ and $g$ \'etale outside the marked fibres, and put
    $r=\sum_kn_k$ with $n_k=\#g^{-1}(p'_k)$, so that $T_{C/S}\cong\OO_{\PP^1}(2-r)$.
    By Riemann--Hurwitz theorem $2N-2=\sum_k(N-n_k)=nN-r$, i.e.\ $2-r=N(2-n)$, so
    $\deg T_{C/S}=\deg g^*T_{C'/S}$ and the injection $dg$ of line bundles is an
    isomorphism.  Hence $N_g=0$ and $N_f\cong g^*N_h\cong\OO_{\PP^1}(\delta)$.
\end{proof}

\begin{remark}\label{rem:transverse}
    If $C'$ meets $\Delta$ transversally, i.e.\ $\mu_k=1$ for all $k$ and
    $n=\Delta\cdot C'$, then the condition in Lemma~\ref{lem:degree}(4) reads
    $-K_{\X}\cdot C'\geq2$, that is $(C')^2\geq0$: there is no condition on
    $\Delta\cdot C'$ at all.  Tangency to $\Delta$ relaxes the hypothesis of
    Lemma~\ref{lem:covers} but tightens the one of Lemma~\ref{lem:degree}; the
    constructions below are arranged so that both hold.
\end{remark}
     
\begin{remark}\label{rem:markings}
    Adding a marking with contact order $0$ changes neither $\delta$ nor the
    freeness of $f$: the quantity $\delta$ of Lemma~\ref{lem:degree} does not depend
    on $P$, and by part (3) the extra marking only enlarges the torsion subsheaf
    $N_g$.  This will be used in Lemma~\ref{lem:gluing}.
\end{remark}
     
\subsection{Free Campana curves on del Pezzo surfaces}\label{ss:construction}
     
Let $\X$ be a del Pezzo surface.  A class $F\in\operatorname{Pic}(\X)$ is a
\emph{conic class} if $F$ is effective, $F^2=0$ and $-K_{\X}\cdot F=2$.  For 
such an $F$ one has $\chi(\OO_{\X}(F))=2$ by Riemann--Roch and
$h^1=h^2=0$ by Kawamata--Viehweg vanishing (as $F-K_{\X}$ is ample), so
$h^0(\X,F)=2$; the system $|F|$ is base point free and defines a conic bundle
$\X\to\PP^1$, and $p_a(F)=1+\frac12(F^2+K_{\X}\cdot F)=0$, so the general member
of $|F|$ is a smooth rational curve.
     
\begin{proposition}\label{prop:conic}
    Let $(X,\Delta_\epsilon)$ be a klt Campana orbifold whose underlying surface
    $\X$ is a del Pezzo surface, and assume that $-(K_{\X}+\Delta_\epsilon)$ is
    ample. Let $F$ be a conic class on $\X$.  Then there exist $N\geq1$ and a free
    Campana curve $f\colon C\to X$ with $f_*[C]=N[F]$, that is, its image is a
    general member of $|F|$.  If $\Delta\cdot F\neq1$, we further get
    $N_f\cong\OO_{\PP^1}$.
\end{proposition}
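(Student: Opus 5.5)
We choose the curve $\underline{C}'$ to be a general member of $|F|$ and build the Campana curve $f=h\circ g$ by base change along a cover supplied by Lemma~\ref{lem:covers}. Freeness will then follow from Lemma~\ref{lem:degree}.

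\textbf{Choosing $C'$.} By Lemma~\ref{lem:conic-class}, $|F|$ is a base point free pencil whose general member is a smooth rational curve. Since $\operatorname{Sing}(\Delta)$ is a finite set of points and $|F|$ is base point free, a general member $C'$ avoids $\operatorname{Sing}(\Delta)$. Since $F\cdot F=0$ and $F$ is nef, $C'$ is not a component of $\Delta$. By Bertini (characteristic $0$), applied to the restriction of the pencil to each $\Delta_i$, a general $C'$ meets $\Delta$ transversally.

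Concretely, $\varphi_{|F|}|_{\Delta_i}\colon\Delta_i\to\PP^1$ is either constant or finite and generically étale, so a general fibre meets $\Delta_i$ in reduced points. The constant case means $\Delta_i$ is a fibre component, and a general fibre misses it. Hence $h^*\Delta=\sum_{k=1}^n p'_k$ with all $\mu_k=1$ and $n=\Delta\cdot F$. Each $p'_k$ lies on a unique component $\Delta_{i(k)}$.

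\textbf{Choosing the cover.} Ampleness gives the key inequality \eqref{eq:key-inequality}:
\[
\sum_{k=1}^n\Bigl(1-\frac{1}{m_{i(k)}}\Bigr)=\Delta_\epsilon\cdot F<-K_{\X}\cdot F=2 .
\]
Lemma~\ref{lem:covers}(1), applied with $w_k=m_{i(k)}$, then produces $g\colon\PP^1\to\underline{C}'$ of some degree $N$ with $e_q\geq m_{i(k)}$ over each $p'_k$. This is exactly condition \eqref{eq:contact}, since $\mu_k=1$.

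If $n\neq1$, Lemma~\ref{lem:covers}(2) lets us take $g$ étale outside the $p'_k$. When $n=0$ we simply take $g=\mathrm{id}$ and $N=1$.

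\textbf{Freeness.} Lemma~\ref{lem:degree}(1) shows that $f=h\circ g$ is a Campana curve with $f_*[C]=N[F]$. For freeness we compute
\[
\delta=N\bigl[(-K_{\X}-\Delta)\cdot F+n-2\bigr]=N\bigl[2-n+n-2\bigr]=0,
\]
which is the transversal case noted in Remark~\ref{rem:transverse}. Hence $H^1(N_f(-1))=0$ by Lemma~\ref{lem:degree}(4), so $f$ is free.

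When $n\neq1$, take $P=\emptyset$ and $g$ étale off the markings. The final clause of Lemma~\ref{lem:degree} then gives $N_f\cong\OO_{\PP^1}(\delta)=\OO_{\PP^1}$.

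\textbf{The case $n=1$.} Here $g(z)=z^{w_1}$ is totally ramified at two points, only one of which is marked. So $N_g$ is a nonzero torsion sheaf, but freeness still holds because torsion does not contribute to $H^1$. This accounts for the exception $\Delta\cdot F\neq 1$ in the statement.

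\textbf{Main obstacle.} The delicate step is guaranteeing transversality and avoidance of $\operatorname{Sing}(\Delta)$ for a general fibre. In particular, we must handle components of $\Delta$ contained in fibres of $\varphi_{|F|}$, such as $(-1)$-curves forming halves of degenerate conics. We expect this to be settled by the fact that a general fibre is disjoint from every fibre component, together with generic smoothness of $\Delta_i\to\PP^1$ for the horizontal components.
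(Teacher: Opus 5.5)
Your proposal is correct and follows the paper's proof essentially step for step: a general member $C'$ of $|F|$ meeting $\Delta$ transversally away from $\operatorname{Sing}(\Delta)$, the cover from Lemma~\ref{lem:covers} obtained from the ampleness inequality, and $\delta=0$ in Lemma~\ref{lem:degree}. Your generic-\'etaleness argument for transversality and your explanation of the $n=1$ exception are useful extra detail. One small slip: $C'\not\subset\Delta$ holds because $C'$ is a general member of a moving pencil, as your vertical-component argument already shows, and not because $F^2=0$ and $F$ is nef, since a special fibre may well be a component of $\Delta$.
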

     
\begin{proof}
    Let $C'\in|F|$ be a general member.  Since $|F|$ is base point free of
    projective dimension $1$, By Bertini's theorem and generic smoothness in
    characteristic $0$ we take $C'$ is a smooth rational curve, that
    $C'\not\subset\Delta$, that $C'$ avoids the finite set
    $\operatorname{Sing}(\Delta)$, and that $C'$ meets $\Delta$ transversally in
    $n=\Delta\cdot F$ distinct points.  Thus $\mu_k=1$ for every $k$ in the notation
    of Lemma~\ref{lem:degree}, so \eqref{eq:contact} amounts to $e_q\geq w_k$ with
    $w_k=m_{i(k)}$. By ampleness of $-(K_{\X}+\Delta_\epsilon)$ we have
    \begin{equation}\label{eq:key}
    \sum_{k=1}^{n}\Bigl(1-\frac{1}{w_k}\Bigr)
    \;=\;\sum_i\epsilon_i\,(\Delta_i\cdot F)
    \;=\;\Delta_\epsilon\cdot F\;<\;-K_{\X}\cdot F\;=\;2 ,
    \end{equation}
    so by Lemma~\ref{lem:covers} we have a finite morphism $g$ of some degree $N$
    satisfying \eqref{eq:contact}, \'etale outside the marked fibres when
    $n\neq1$.  Finally
    \[
    \bigl(-K_{\X}-\Delta\bigr)\cdot C'+n-2
    \;=\;\bigl(2-\Delta\cdot F\bigr)+\Delta\cdot F-2\;=\;0 ,
    \]
    so Lemma~\ref{lem:degree} applies with $\delta=0$. Hence the map $f = h \circ g$ is a
    free Campana curve with $f_*[C]=N[F]$, and $N_f\cong\OO_{\PP^1}$ when
    $n=\Delta\cdot F\neq1$.
\end{proof}
     
    Inequality \eqref{eq:key} says that for a conic class the
    ampleness of $-(K_{\X}+\Delta_\epsilon)$ gives precisely that the orbifold
    $\PP^1$ obtained by weighting the points of $C'\cap\Delta$ has positive orbifold
    Euler characteristic, which by Lemma~\ref{lem:covers} is exactly the condition
    for the required cover to exist.
     
    Conic classes do not suffice when $\rho(\X)\leq2$ and $\X\neq\PP^1\times\PP^1$;
    there we use the pullback of the line class.
     
\begin{proposition}\label{prop:line}
    Let $(X,\epsilon D)$ be a klt Campana orbifold with irreducible boundary $D$ and
    $\epsilon=1-1/m$, whose underlying surface $\X$ is $\PP^2$ or the blow up of
    $\PP^2$ at one or two points, and assume $-(K_{\X}+\epsilon D)$ is ample.  Let
    $\beta\colon\X\to\PP^2$ be a birational morphism and $H=\beta^*\OO_{\PP^2}(1)$.
    Then there exist $N\geq1$ and a free Campana curve $f\colon C\to X$ with
    $f_*[C]=N[H]$.
\end{proposition}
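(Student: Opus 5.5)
The plan is to apply Lemma~\ref{lem:covers} and Lemma~\ref{lem:degree} to a suitable smooth rational curve $C'$ in the class $H$. Every such $C'$ is the strict transform of a line $L\subset\PP^2$ that avoids the (at most two) blown-up points. Put $t=D\cdot H$; this is the degree of $\beta_*D$, and $t=0$ if $D$ is exceptional. Since $-K_{\X}\cdot H=3$, ampleness of $-(K_{\X}+\epsilon D)$ tested on the nef class $H$ gives $\epsilon t<3$. As $\epsilon\geq\tfrac12$, this forces $t\leq5$. Recall also that $D$ is smooth, because $\Delta$ is SNC. I split into two cases according to whether $\epsilon t<2$ or $2\leq\epsilon t<3$.

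\emph{Case 1: $\epsilon t<2$.} Take $L$ general. By Bertini, $C'$ is smooth rational, avoids the exceptional curves' special points, and meets $D$ transversally in $n=t$ points; if $D$ is exceptional, $C'$ misses it and $n=0$. Then $\sum_k(1-1/m)=\epsilon t<2$, so Lemma~\ref{lem:covers} gives a cover $g$ with $e_q\geq m$. In Lemma~\ref{lem:degree} we have $\mu_k=1$ and
\[
(-K_{\X}-D)\cdot C'+n-2=3-t+t-2=1\geq0,
\]
so $f=h\circ g$ is a free Campana curve of class $N[H]$, as in Remark~\ref{rem:transverse}.

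\emph{Case 2: $2\leq\epsilon t<3$.} Here $t\geq3$, so $\beta_*D$ is not a line. Take $L$ to be the tangent line to $\beta_*D$ at a general point. In characteristic $0$, biduality shows that a general tangent line of a reduced non-linear plane curve has the following properties: it is simply tangent (contact $2$) at exactly one point; it is transverse to $\beta_*D$ elsewhere; it avoids $\operatorname{Sing}(\beta_*D)$. It also avoids any fixed point, since only a line has all of its tangents through a fixed point. Hence $C'$ is smooth rational, avoids $\operatorname{Sing}(\Delta)$, and $h^*D=2p'_1+p'_2+\dots+p'_{t-1}$, so $n=t-1$. Now
\[
\delta/N=(3-t)+(t-1)-2=0,
\]
so Lemma~\ref{lem:degree}(4) gives freeness once a cover exists. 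Condition \eqref{eq:contact} requires weights $w_1=\lceil m/2\rceil$ and $w_k=m$ for $k\geq2$. By Lemma~\ref{lem:covers} we therefore need
\[
\Bigl(1-\tfrac{1}{\lceil m/2\rceil}\Bigr)+(t-2)\Bigl(1-\tfrac1m\Bigr)<2 .
\]

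The main obstacle is verifying this last inequality, and I will do it by a finite case check using $2\leq\epsilon t<3$. For $m=2$ we get $t\in\{4,5\}$, with sums $0+1$ and $0+\tfrac32$. For $m=3$ we get $t\in\{3,4\}$, with sums $\tfrac12+\tfrac23$ and $\tfrac12+\tfrac43$. For $m=4$ we get $t=3$, with sum $\tfrac12+\tfrac34$. For $m\geq5$ we get $t=3$, and the sum is two terms each less than $1$. All of these are less than $2$. Since $n=t-1\neq1$, Lemma~\ref{lem:covers}(2) even gives $g$ étale off the markings, so $N_f\cong\OO_{\PP^1}$. In either case $f_*[C]=N[H]$, which proves the proposition.
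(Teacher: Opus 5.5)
Your proof is correct and follows essentially the same route as the paper: a general line meeting $D$ transversally when $\epsilon t<2$, and a general simple tangent line of $\beta_*D$ (which avoids the blown-up points by the non-strangeness argument) when $\epsilon t\geq 2$, followed by the same finite check of the orbifold inequality and the computation $\delta=0$. The only differences are minor: you use the lower bound $2\leq\epsilon t$ to pin down $t$ exactly for each $m$, and you note that étaleness of $g$ gives $N_f\cong\OO_{\PP^1}$.
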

     
\begin{proof}
    Since $\Delta=D$ is irreducible and SNC, $D$ is smooth; put $t:=D\cdot H$, so
    that $t=\deg\beta_*D$, with $t=0$ if $D$ is $\beta$-exceptional.  Ampleness
    gives
    \begin{equation}\label{eq:line-ample}
    \epsilon\,t\;=\;\epsilon D\cdot H\;<\;-K_{\X}\cdot H\;=\;3 .
    \end{equation}
     
    \emph{Case 1: $\epsilon t<2$.}  Let $C'\in|H|$ be general; as in the proof of
    Proposition~\ref{prop:conic} it is a smooth rational curve meeting $D$
    transversally in $n=t$ distinct points and avoiding the points blown up by
    $\beta$.  Here $w_k=m$ for all $k$, and $\sum_k(1-1/w_k)=\epsilon t<2$, so
    Lemma~\ref{lem:covers} applies, while
    \[
    \bigl(-K_{\X}-D\bigr)\cdot C'+n-2\;=\;(3-t)+t-2\;=\;1\;\geq\;0 ,
    \]
    so Lemma~\ref{lem:degree} produces a free Campana curve with $f_*[C]=N[H]$.
     
    \emph{Case 2: $\epsilon t\geq2$.}  Since $\epsilon < 1$ we get $t \geq 3$, so
    $B := \beta_*D$ is an irreducible plane curve of degree $t \geq 3$. In particular
    $B$ is not a line.  Moreover $\operatorname{Sing}(B)$ is contained in the set of
    points blown up by $\beta$, because $D$ is smooth and maps birationally onto
    $B$.  As $\operatorname{char}\mathbf{k} = 0$, biduality holds for $B$ and the
    general tangent line $L$ of $B$ is a simple tangent: it is tangent at a single
    smooth point of $B$ with contact order exactly $2$ and meets $B$ transversally
    at $t-2$ further points.  
    Furthermore $L$ avoids the at most two points blown up by $\beta$: otherwise all
    tangent lines of $B$ would pass through a fixed point, forcing $B$ to be a
    strange curve, hence a line in characteristic $0$.  Consequently the strict
    transform $C'$ of $L$ equals $\beta^*L$, lies in $|H|$, is a smooth rational
    curve, and
    \[
    h^*D \;=\; 2p'_1+p'_2+\dots+p'_{t-1},\qquad n=t-1 ,
    \]
    so that $w_1=\lceil m/2\rceil$ and $w_k=m$ for $2\leq k\leq t-1$.  We claim
    \begin{equation}\label{eq:tangent-check}
    \Bigl(1-\frac{1}{\lceil m/2\rceil}\Bigr)+(t-2)\,\epsilon\;<\;2 .
    \end{equation}
    If $m=2$ then $\lceil m/2\rceil=1$ and \eqref{eq:line-ample} gives $t\leq5$, so
    the left hand side equals $(t-2)/2\leq3/2$.  If $m=3$ then
    \eqref{eq:line-ample} gives $t\leq4$ and the left hand side is at most
    $\frac12+\frac23\cdot2=\frac{11}{6}$.  If $m\geq4$ then $\epsilon\geq\frac34$
    and \eqref{eq:line-ample} gives $t<4$, so the left hand side is
    $(1-1/\lceil m/2\rceil)+\epsilon<1+1=2$.  This proves
    \eqref{eq:tangent-check}, so Lemma~\ref{lem:covers} applies.  Finally
    \[
    \bigl(-K_{\X}-D\bigr)\cdot C'+n-2\;=\;(3-t)+(t-1)-2\;=\;0 ,
    \]
    and Lemma~\ref{lem:degree} again yields a free Campana curve with
    $f_*[C]=N[H]$.
\end{proof}
     
\begin{lemma}\label{lem:positivity}
    Let $\X$ be a del Pezzo surface and define a finite set
    $\mathcal{G}(\X)\subset\operatorname{Pic}(\X)$ as follows:
    \begin{itemize}
    \item if $\deg\X\leq6$: all conic classes on $\X$.
    \item if $\deg\X=7$: $\{H-E_1,\;H-E_2,\;H\}$.
    \item if $\X$ is the blow up of $\PP^2$ at one point: $\{H-E,\;H\}$.
    \item if $\X=\PP^1\times\PP^1$: $\{H_1,\;H_2\}$.
    \item if $\X=\PP^2$: $\{H\}$.
    \end{itemize}
    Then:
    \begin{enumerate}
    \item every $\gamma\in\mathcal{G}(\X)$ is nef, and every $\gamma\neq H$ is a
          conic class.
    \item for every extremal ray $R$ of $\overline{\operatorname{Eff}}(\X)$ there is
          a $\gamma\in\mathcal{G}(\X)$ with $\gamma\cdot R>0$. Consequently
          $\sum_{\gamma}a_\gamma\gamma$ lies in the interior of $\Nef_1(\X)$ for any
          $a_\gamma\in\Z_{>0}$.
    \item $\gamma\cdot\gamma'>0$ for any two distinct
          $\gamma,\gamma'\in\mathcal{G}(\X)$.
    \end{enumerate}
\end{lemma}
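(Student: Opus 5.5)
Since $\mathcal{G}(\X)$ is defined case by case, the plan is to verify (1)--(3) case by case. Lemma~\ref{lem:dP-cones} supplies the extremal rays, and for $d\leq 6$ we will use the lattice description of Lemma~\ref{lem:dP-model}.

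\emph{Part (1).} Conic classes are nef by Lemma~\ref{lem:conic-class}(1), and $H$ is nef as a pullback of an ample class. It remains to check that the listed non-$H$ classes are conic classes, which is a direct computation. For example $(H-E_i)^2=1-1=0$ and $-K_{\X}\cdot(H-E_i)=3-1=2$, and $H_j^2=0$ with $-K_{\X}\cdot H_j=2$ on $\PP^1\times\PP^1$. Each of these is effective, being the class of the strict transform of a line through the blown-up point, or of a ruling.

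\emph{Part (2).} The second clause follows from the first. Each $\gamma$ is nef, so $\gamma\cdot R\geq0$ for every $R$, and the first clause gives one strictly positive term. Therefore $\sum a_\gamma\gamma\cdot R>0$ for every extremal ray, and Lemma~\ref{lem:dP-cones} puts the sum in the interior.

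For the first clause we treat the small cases explicitly, using Lemma~\ref{lem:dP-cones}:
\begin{itemize}
\item on $\PP^2$, $H\cdot H=1$;
\item on $\PP^1\times\PP^1$, $H_1\cdot H_2=1$;
\item on the one-point blow up, $H\cdot E=0$ but $(H-E)\cdot E=1$, and $H\cdot(H-E)=1$;
\item in degree $7$ the $(-1)$-curves are $E_1,E_2,H-E_1-E_2$, and we have $(H-E_2)\cdot E_1=1$, $(H-E_1)\cdot E_2=1$ and $H\cdot(H-E_1-E_2)=1$.
\end{itemize}

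For $d\leq6$ the rays are the $(-1)$-curves, so for each $(-1)$-curve $E$ we must find a conic class $F$ with $F\cdot E>0$. Here $F\cdot E=1$ is what we aim for. In the model of Lemma~\ref{lem:dP-model} the $(-1)$-curves are $E_i$, $H-E_i-E_j$, $2H-\sum_{5}E_{i_k}$, and in lower degree also cubics and higher classes. The cleanest uniform argument avoids listing them. Given $E$, take a $(-1)$-curve $E'$ with $E\cdot E'=1$. Such an $E'$ exists for $d\leq 6$, since at least three points are blown up and the Weyl group acts transitively on $(-1)$-curves, so it suffices to exhibit one, e.g.\ $E_1$ and $H-E_1-E_2$. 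Then $F=E+E'$ satisfies $F^2=-1+2-1=0$ and $-K_{\X}\cdot F=2$, and $F$ is effective, so $F$ is a conic class. However $F\cdot E=-1+1=0$, so this $F$ does not work. Instead choose $F$ with $F\cdot E=1$ directly. By Weyl-group transitivity it suffices to treat $E=E_1$, and then $F=H-E_2$ gives $F\cdot E_1=0$, which fails again, whereas $F=H-E_1$ gives $(H-E_1)\cdot E_1=1$, which works. Since the Weyl group $W(E_{9-d})$ preserves $K_{\X}$ and the intersection form, it permutes the conic classes and acts transitively on $(-1)$-curves. Thus $E_1$ suffices, and this transitivity is the step to justify carefully. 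Alternatively, one can argue without transitivity: contracting $E$ gives a blow-up model in which $E$ is $E_1$, and $H'-E$ is a conic class.

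\emph{Part (3).} In the small cases this is immediate from the computations above, plus $(H-E_1)\cdot(H-E_2)=1$. For $d\leq6$ we need $F\cdot F'>0$ for distinct conic classes $F,F'$. Both are nef, so $F\cdot F'\geq0$. If $F\cdot F'=0$, then $F+F'$ has square $0$, and the Hodge index theorem forces $F$ and $F'$ to be proportional. Since $-K_{\X}\cdot F=-K_{\X}\cdot F'=2$, proportionality gives $F=F'$. We expect part (2) for $d\leq6$, namely the Weyl-group or contraction reduction, to be the main obstacle. The Hodge index argument in part (3) should be routine.
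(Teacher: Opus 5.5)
Your proof is correct in substance, but one line is false as written. In degree $7$ you assert $(H-E_2)\cdot E_1=1$ and $(H-E_1)\cdot E_2=1$, but both products are $0$; you even compute $(H-E_2)\cdot E_1=0$ yourself in the next paragraph. The right witnesses are $(H-E_1)\cdot E_1=(H-E_2)\cdot E_2=1$ and $H\cdot(H-E_1-E_2)=1$, exactly as in the paper. Since both $H-E_1$ and $H-E_2$ lie in $\mathcal{G}(\X)$, the repair is immediate. You should also remove the abandoned attempts with $E+E'$ and $H-E_2$ from the write-up.

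Beyond that, your route differs from the paper's in two places.

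For $d\leq 6$ in part (2), your main argument is transitivity of $W(E_{9-d})$ on $(-1)$-curves. This is true, but it is an external input (e.g.\ from Manin's book). You would also need to check that $w(H-E_1)$ is effective, which follows from Riemann--Roch since $\chi=2$ and $h^2=h^0(K_{\X}-F)=0$. Your fallback is to contract $E$, apply Lemma~\ref{lem:dP-model} to the resulting surface of degree $d+1\leq 7$, and take $F=H-E$. That is precisely the paper's proof and is self-contained, so it is the one to write down.

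For part (3), the paper argues geometrically: if $F\cdot F'=0$, the general member of $|F'|$ lies in a fibre of $\varphi_{|F|}$ without being a whole fibre. Hence it is a component of a reducible fibre and so a $(-1)$-curve, contradicting $(F')^2=0$. You instead use that $F,F'$ span a totally isotropic subspace of $N^1(\X)_{\R}$. That subspace has dimension at most $1$ because the intersection form has signature $(1,\rho-1)$. The normalisation $-K_{\X}\cdot F=-K_{\X}\cdot F'=2$ then gives $F\equiv F'$, and numerical and linear equivalence agree on a rational surface.

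This lattice-theoretic argument needs no knowledge of degenerate conic fibres and is arguably cleaner. State it via the isotropic span, or apply Hodge index to $(A\cdot F')F-(A\cdot F)F'$ with $A$ ample. Do not state it via the remark that $(F+F')^2=0$, which is only a restatement of $F\cdot F'=0$.
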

     
\begin{proof}
    (1)  A conic class is represented by an irreducible curve of self-intersection
    $0$, hence meets every irreducible curve non-negatively and is nef. $H$, $H_1$,
    $H_2$ are pullbacks of ample classes, hence nef.  For $\deg\X=7$ we have
    $(H-E_i)^2=0$ and $-K_{\X}\cdot(H-E_i)=(3H-E_1-E_2)\cdot(H-E_i)=2$. For the blow
    up of $\PP^2$ at one point, $(H-E)^2=0$ and $-K_{\X}\cdot(H-E)=2$. For
    $\PP^1\times\PP^1$, $H_i^2=0$ and $-K_{\X}\cdot H_i=2$.  So all these are conic
    classes.
     
    (2)  Recall from Section~2 that $\overline{\operatorname{Eff}}(\X)$ is generated
    by the $(-1)$-curves when $\deg\X\leq7$, by $E$ and $H-E$ when $\X$ is the blow
    up of $\PP^2$ at one point, by $H_1$ and $H_2$ when $\X=\PP^1\times\PP^1$, and
    by $H$ when $\X=\PP^2$.
     
    Assume $\deg\X=d\leq6$ and let $E$ be a $(-1)$-curve on $\X$.  Contracting $E$
    yields a del Pezzo surface of degree $d+1\leq7$, which is a blow up of $\PP^2$.
    After composition, we obtain a birational morphism $\beta\colon\X\to\PP^2$ whose first
    step contracts $E$.  With $H=\beta^*\OO_{\PP^2}(1)$ we have $H\cdot E=0$, so
    $F:=H-E$ satisfies $F^2=0$ and $-K_{\X}\cdot F=2$. It is effective, since it is the
    class of the strict transform of a line through $\beta(E)$.  Thus $F$ is a conic
    class with $F\cdot E=1>0$.
     
    If $d=7$ the $(-1)$-curves are $E_1$, $E_2$ and $H-E_1-E_2$, and
    $(H-E_1)\cdot E_1=(H-E_2)\cdot E_2=H\cdot(H-E_1-E_2)=1$.  If $\X$ is the blow up
    of $\PP^2$ at one point, $(H-E)\cdot E=1$ and $H\cdot(H-E)=1$.  If
    $\X=\PP^1\times\PP^1$, $H_1\cdot H_2=1$.  If $\X=\PP^2$, $H^2=1$.
     
    For the consequence, let $\gamma_0=\sum_\gamma a_\gamma\gamma$ with
    $a_\gamma>0$.  For an extremal ray $R$ every summand satisfies
    $\gamma\cdot R\geq0$ by (1) and at least one satisfies $\gamma\cdot R>0$. Hence
    $\gamma_0\cdot R>0$ for every extremal ray of
    $\overline{\operatorname{Eff}}(\X)$, i.e.\ $\gamma_0$ lies in the interior of
    the dual cone $\Nef_1(\X)$.
     
    (3)  Let $F\neq F'$ be conic classes and suppose $F\cdot F'=0$.  As $F$ is nef
    with $F\cdot F'=0$, the general member of $|F'|$ is contracted by the conic
    bundle defined by $|F|$, hence is contained in a fibre.  It is not a whole fibre
    (else $F=F'$), so it is a component of a reducible fibre and therefore a
    $(-1)$-curve, contradicting $(F')^2=0$.  In the remaining cases
    $(H-E_i)\cdot H=(H-E)\cdot H=H_1\cdot H_2=1>0$.
\end{proof}
     
\begin{lemma}\label{lem:gluing}
    Let $f_j\colon C_j\to X$, $1\leq j\leq s$, be free Campana curves over the
    standard log point, and let $x_1,\dots,x_{s-1}$ be pairwise distinct closed
    points of $\X\setminus\Delta$ with $x_j\in f_j(C_j)\cap f_{j+1}(C_{j+1})$.
    Then there exists a free Campana curve $f\colon C\to X$ with
    $f_*[C]=\sum_{j=1}^{s}f_{j*}[C_j]$.
\end{lemma}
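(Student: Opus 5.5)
We prove the statement by induction on $s$, the case $s=1$ being trivial. The induction step reduces to gluing two free Campana curves at one point of $\X \setminus \Delta$ and smoothing, so the main work is the case $s=2$, which we carry out in three stages.

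\emph{Gluing.} Given $f_1, f_2$ and $x_1 \in f_1(C_1)\cap f_2(C_2)\setminus\Delta$, choose $q_1\in C_1$ and $q_2\in C_2$ with $f_j(q_j)=x_1$. Add $q_j$ as a marking of contact order $0$. By Remark~\ref{rem:markings} and \cite[Corollary~2.7]{chen2024campana} this preserves the Campana condition and freeness, since it only enlarges the torsion subsheaf of $N_{f_j}$ and $\delta$ is unchanged. Then identify $q_1$ with $q_2$, obtaining a nodal genus-zero curve $C_0=C_1\cup_{q}C_2$ and a stable log map $f_0\colon C_0\to X$, as in \cite[\S~2.6.1]{chen2024campana}. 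Because $x_1\notin\Delta$, the node carries trivial log structure. All other markings keep their contact orders, so $f_0$ satisfies the Campana condition marking by marking, and $f_{0*}[C_0]=f_{1*}[C_1]+f_{2*}[C_2]$.

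\emph{Unobstructedness and smoothing.} Use the normalization sequence
\[
0\to N_{f_0}\to N_{f_1}\oplus N_{f_2}\to N_{f_0}|_{q}\to 0,
\]
valid away from torsion since $q$ maps to $X^\circ$, where the log structure is trivial. Twist by a point on $C_1$, or use a twist that is degree $-1$ on one component only. Freeness $H^1(N_{f_j}(-1))=0$ gives $H^1(N_{f_j})=0$, and each $N_{f_j}$ has a nonnegative-degree torsion-free part. Hence $H^0(N_{f_1})\oplus H^0(N_{f_2})$ surjects onto the fibre at $q$, and $H^1(N_{f_0})=0$. By \cite[\S~2.7]{chen2024campana}, the moduli of stable log maps with these contact orders is then smooth of expected dimension at $[f_0]$, and $f_0$ deforms to a map with smooth domain $f\colon C\to X$. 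Since log deformations preserve contact orders, $f$ is again a Campana curve, and it has the same class $f_*[C]=f_{0*}[C_0]$.

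\emph{Freeness of the smoothing.} The remaining point is that a general smoothing is free. Freeness, i.e.\ $H^1(N_f(-1))=0$ with the twist at a general point, is an open condition by upper semicontinuity. So it suffices to check it at the central fibre for a twist by a general point $y\in C_1$, namely $H^1(N_{f_0}(-y))=0$. From the sequence above, $N_{f_0}(-y)$ restricts to $N_{f_1}(-y)$ on $C_1$ and to $N_{f_2}$ on $C_2$. Both have vanishing $H^1$ because $f_1$ and $f_2$ are free. Moreover $H^0(N_{f_2})\to N_{f_2}|_q$ is surjective, since $N_{f_2}(-q)$ has vanishing $H^1$. The Mayer--Vietoris sequence then gives the vanishing.

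For general $s$, glue the chain $C_1\cup\dots\cup C_s$ at $x_1,\dots,x_{s-1}$. The points $x_j$ are distinct, so the nodes are distinct. Either run the same argument on the whole comb, attaching at each node the component on which surjectivity holds, or smooth inductively two at a time. The smoothed curve still passes near $x_{j+1}$ and meets $f_{j+2}(C_{j+2})$ at a nearby point of $\X\setminus\Delta$, by properness and because $x_{j+1}\notin\Delta$.

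The main obstacle is precisely this log-smoothing and semicontinuity step in the logarithmic setting. We must identify the obstruction space of the log moduli at a node over $X^\circ$ with $H^1(N_{f_0})$, and we rely on \cite[Section~2.7]{chen2024campana} for this rather than reproving it.
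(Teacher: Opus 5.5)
Your proposal is correct and follows the paper's proof. You mark preimages of $x_1$ with contact order $0$, glue at the interior point via \cite[\S~2.6.1]{chen2024campana}, smooth via \cite[\S~2.7]{chen2024campana}, and induct on $s$. The paper simply cites \S~2.7 for a smoothing with $H^1(N_f(-1))=0$, whereas you also check unobstructedness, freeness of a general smoothing by semicontinuity, and that the later gluing points survive each inductive smoothing.

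One small correction: since $df_0$ vanishes at the node, the map $N_{f_0}\to N_{f_1}\oplus N_{f_2}$ has a length-one kernel at $q$ rather than being injective. This is harmless for your $H^1$ computations.
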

     
\begin{proof}
    We may assume $s=2$ and use induction, the general case following by gluing
    $f_1,\dots,f_j$ first and then attaching $f_{j+1}$ at $x_j$.
     
    Choose $q_1\in f_1^{-1}(x_1)$ and $q_2\in f_2^{-1}(x_1)$ and add them to the
    sets of markings of $C_1$ and $C_2$, with contact order $0$. By
    Remark~\ref{rem:markings} the maps $f_1,f_2$ remain free Campana curves.  Since
    $x_1\in\X\setminus\Delta$, the gluing construction of
    \cite[Section~2.6.1]{chen2024campana} applies and produces a stable log map
    $f^0\colon C^0\to X$ over a geometric log point, where
    $C^0=C_1\cup_{q_1=q_2}C_2$, whose restriction to $C_j$ is $f_j$ and whose
    markings are those of $f_1$ and $f_2$ other than $q_1,q_2$, with unchanged
    contact orders.  In particular $f^0$ satisfies the Campana condition of
    Definition~\ref{def:campana-curve}.  By the log smoothing of
    \cite[Section~2.7]{chen2024campana} there is a stable log map $f\colon C\to X$
    with the same discrete data as $f^0$, with smooth irreducible domain, and with
    $H^1(N_f(-1))=0$. It is therefore a free Campana curve, and
    $f_*[C]=f^0_*[C^0]=f_{1*}[C_1]+f_{2*}[C_2]$.
\end{proof}
     
\subsection{Proofs of the main theorems}\label{ss:proofs}
     
\begin{theorem}\label{thm:free-campana-curve}
    Let $(X,\epsilon D)$ be a del Pezzo orbifold with an irreducible boundary.
    There exists a free Campana curve $f\colon C\to X$ such that $f_*[C]$ lies in
    the interior of the nef cone of curves $\Nef_1(X)$.
\end{theorem}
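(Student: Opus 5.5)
The plan is to assemble the free Campana curves of \S\ref{ss:construction} along the finite set $\mathcal{G}(\X)$ of Lemma~\ref{lem:positivity}, and then glue them with Lemma~\ref{lem:gluing}.

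\textbf{Step 1: one free Campana curve for each class in $\mathcal{G}(\X)$.} By Lemma~\ref{lem:dP-model} the surface $\X$ is one of the cases listed in Lemma~\ref{lem:positivity}. For each $\gamma\in\mathcal{G}(\X)$ I choose a free Campana curve $f_\gamma\colon C_\gamma\to X$ with $f_{\gamma*}[C_\gamma]=N_\gamma\gamma$ for some $N_\gamma\geq1$.
\begin{itemize}
\item If $\gamma$ is a conic class, this is Proposition~\ref{prop:conic}, which applies because $-(K_{\X}+\epsilon D)$ is ample.
\item If $\gamma=H$, which only happens when $\X$ is $\PP^2$ or the blow up of $\PP^2$ at one or two points, this is Proposition~\ref{prop:line}. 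Here irreducibility of $D$ is used.
\end{itemize}
In both constructions the image of $f_\gamma$ is a general member of a base point free linear system: $|F|$ for a conic class, and $|H|$ in Case~1. In Case~2 the image is a general tangent line of $\beta_*D$, which moves in a one-dimensional family covering $\X$.

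\textbf{Step 2: arranging the intersection points.} Enumerate $\mathcal{G}(\X)=\{\gamma_1,\dots,\gamma_s\}$. By Lemma~\ref{lem:positivity}(3), $\gamma_j\cdot\gamma_{j+1}>0$ for consecutive classes. So the images $f_{\gamma_j}(C_{\gamma_j})$ and $f_{\gamma_{j+1}}(C_{\gamma_{j+1}})$ meet, and since they are distinct irreducible curves they meet in finitely many points. I choose the curves one after another, each general in its family. The intersection of two general members of distinct covering families is then a finite set of points that moves as the members vary, so it avoids the curve $\Delta=D$. The same genericity makes the chosen points $x_j$ pairwise distinct.

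For Case~2 of Proposition~\ref{prop:line} I need the following check. The tangent lines of $\beta_*D$ sweep out $\PP^2$, because $\beta_*D$ has degree at least $3$ and is not a line. Hence a general tangent line and a general conic meet at a point outside $D$.

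\textbf{Step 3: gluing and interiority.} Lemma~\ref{lem:gluing} (equivalently Proposition~\ref{prop:gluing}) now gives a free Campana curve $f$ with
\[
f_*[C]=\sum_j N_{\gamma_j}\gamma_j .
\]
Every coefficient $N_{\gamma_j}$ is a positive integer, so Lemma~\ref{lem:positivity}(2) shows that this class lies in the interior of $\Nef_1(\X)$. This proves the statement.

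\textbf{Main obstacle.} The delicate step is Step~2. I must ensure that consecutive curves meet at points of $\X\setminus\Delta$, and that the meeting points are distinct so that the chain hypothesis of Lemma~\ref{lem:gluing} holds. I will argue this by a dimension count on pairs of general members of two covering families, using that $D$ is a proper closed subset of $\X$.

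If some pair in $\mathcal{G}(\X)$ gave trouble, I would instead glue each curve to a single hub curve, for example one fixed conic. By Lemma~\ref{lem:positivity}(3) the hub meets every other class, and the gluing of Proposition~\ref{prop:gluing} extends to a tree of curves by iterating.
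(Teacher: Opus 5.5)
Your proposal is correct and is essentially the paper's proof: free curves of class $N_j\gamma_j$ from Propositions~\ref{prop:conic} and~\ref{prop:line}, a successive general choice of members so that consecutive ones meet off $D$ at pairwise distinct points, and then Lemma~\ref{lem:gluing} together with Lemma~\ref{lem:positivity}(2). The one place to tighten is Step~2, where being a \emph{covering} family is not the right hypothesis: two pencils with a common base point on $D$ would fail, and ``tangent lines sweep out $\PP^2$'' does not by itself give your Case~2 check. What the paper uses, and what you need, is that for each of the finitely many $y\in C'_j\cap D$ the members of the next family through $y$ form a proper closed subset. This is a hyperplane for the base point free systems $|F|$ and $|H|$, and a finite set for the tangent lines because $\beta_*D$ is not a line and hence not strange.
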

     
\begin{proof}
    Write $\mathcal{G}(\X)=\{\gamma_1,\dots,\gamma_s\}$ as in
    Lemma~\ref{lem:positivity}.  For each $j$ we construct a free Campana curve
    $f_j\colon C_j\to X$ with $f_{j*}[C_j]=N_j\gamma_j$, $N_j\geq1$: if $\gamma_j$
    is a conic class we apply Proposition~\ref{prop:conic}, and if $\gamma_j=H$
    (which occurs only when $\X$ is $\PP^2$, the blow up of $\PP^2$ at one point, or
    a del Pezzo surface of degree $7$) we apply Proposition~\ref{prop:line}.  In
    either case the underlying curve $C'_j\subset\X$ is a general member of a
    family $\mathcal{F}_j$ of curves of class $\gamma_j$ of positive dimension:
    either the base point free linear system $|\gamma_j|$, or the one dimensional
    family of strict transforms of tangent lines of $\beta_*D$ in Case~2 of
    Proposition~\ref{prop:line}.
     
    We may choose these members so that consecutive ones meet away from $D$.
    Indeed, fix $C'_j$ general in $\mathcal{F}_j$.  By Lemma~\ref{lem:positivity}(3)
    we have $\gamma_j\cdot\gamma_{j+1}>0$, so every member of $\mathcal{F}_{j+1}$
    meets $C'_j$.  The set $C'_j\cap D$ is finite, and for each of its points $y$
    the members of $\mathcal{F}_{j+1}$ through $y$ form a proper closed subset of
    $\mathcal{F}_{j+1}$: for a base point free linear system of positive dimension
    this locus is a hyperplane, and for the family of tangent lines of $\beta_*D$ it
    is finite, since otherwise all tangent lines of $\beta_*D$ would pass through
    $y$ and $\beta_*D$ would be a line.  Hence a general member $C'_{j+1}$ of
    $\mathcal{F}_{j+1}$ meets $C'_j$ only outside $D$.  Choosing the members
    successively and shrinking further, we may also assume that the chosen points
    $x_j\in C'_j\cap C'_{j+1} \setminus D$, $1\leq j\leq s-1$, are pairwise
    distinct.
     
    Lemma~\ref{lem:gluing} now yields a free Campana curve $f\colon C\to X$ with
    \[
    f_*[C]\;=\;\sum_{j=1}^{s}N_j\gamma_j ,
    \]
    which lies in the interior of $\Nef_1(X)$ by Lemma~\ref{lem:positivity}(2).
\end{proof}
     
\begin{corollary}\label{cor:strongly-uniruled}
    Assume the ground field has characteristic $0$.  Then a del Pezzo orbifold with
    irreducible boundary $(X,\epsilon D)$ is strongly Campana uniruled.
\end{corollary}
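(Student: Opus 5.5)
The corollary should follow almost directly from Theorem~\ref{thm:free-campana-curve}, since strong Campana uniruledness (Definition~\ref{def:strongly-uniruled}) asks exactly for a free Campana curve whose class lies in the interior of $\Nef_1(X)$. So the proof is an unwinding of definitions. The only real work is to check that the hypotheses of the theorem and its ingredients are met in characteristic $0$ for every del Pezzo orbifold with irreducible boundary.

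First I will fix the setup. Let $(X,\epsilon D)$ be a del Pezzo orbifold with $D$ irreducible and $\epsilon=1-1/m$. Since $\Delta=D$ is SNC and irreducible, $D$ is smooth, and $-(K_{\X}+\epsilon D)$ is ample by Definition~\ref{def:dP-orbifold}. By Lemma~\ref{lem:dP-model}, $\X$ is one of: a blow up of $\PP^2$ at $9-d$ points, the blow up at one point, $\PP^1\times\PP^1$, or $\PP^2$. Lemma~\ref{lem:positivity} therefore supplies the set $\mathcal{G}(\X)$ in every case.

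For each $\gamma\in\mathcal{G}(\X)$ I will produce a free Campana curve of class $N_\gamma\gamma$. When $\gamma$ is a conic class, this comes from Proposition~\ref{prop:conic}. When $\gamma=H$, which only happens for $\PP^2$, the one-point blow up, or degree $7$, it comes from Proposition~\ref{prop:line}, whose irreducibility hypothesis is exactly ours. Characteristic $0$ is used here for Bertini, generic smoothness and biduality, and in the freeness and smoothing results of \cite{chen2024campana}. These are assembled in the proof of Theorem~\ref{thm:free-campana-curve}: it glues the curves via Lemma~\ref{lem:gluing} at distinct points outside $D$, giving a free Campana curve of class $\sum_j N_j\gamma_j$. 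That class lies in the interior of $\Nef_1(X)$ by Lemma~\ref{lem:positivity}(2), which applies because all $N_j>0$.

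I then conclude by Definition~\ref{def:strongly-uniruled}. For $\X=\PP^2$ the interiority condition is vacuous, and Proposition~\ref{prop:line} alone suffices. The only point I expect to need care is the compatibility check: Theorem~\ref{thm:free-campana-curve} is stated without an explicit characteristic hypothesis, while its ingredients need characteristic $0$. Stating the corollary under $\operatorname{char}k=0$ is precisely what supplies this. Beyond that, no new obstacle arises.
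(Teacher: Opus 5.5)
Your proposal is correct and follows the paper's route. The paper deduces the corollary in one line from Theorem~\ref{thm:free-campana-curve} and Definition~\ref{def:strongly-uniruled}; your recap of Propositions~\ref{prop:conic} and \ref{prop:line} and Lemmas~\ref{lem:positivity} and \ref{lem:gluing} simply restates that theorem's proof, and the characteristic-$0$ ``compatibility check'' you flag is automatic, because Section~\ref{sec:mainthm} assumes $\operatorname{char} k = 0$ throughout.
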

     
\begin{proof}
    Immediate from Theorem~\ref{thm:free-campana-curve} and Definition~\ref{def:strongly-uniruled}.
\end{proof}
     
\begin{theorem}\label{thm:crc}
    Assume that $\mathbf{k}$ is an algebraically closed field of characteristic $0$
    and let $(X,\epsilon D)$ be a del Pezzo orbifold with irreducible boundary.
    Then $(X,\epsilon D)$ is Campana rationally connected.
\end{theorem}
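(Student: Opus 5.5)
The statement should follow directly from Corollary~\ref{cor:strongly-uniruled} combined with Corollary~\ref{cor:CRC} (that is, \cite[Corollary~6.7]{chen2024campana}). The plan is to verify the two hypotheses of Corollary~\ref{cor:CRC} for $(X,\epsilon D)$ and then invoke it.

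First, we check that the underlying surface $\X$ is rationally connected. By Lemma~\ref{lem:dP-model}, $\X$ is either $\PP^2$, $\PP^1\times\PP^1$, or a blow-up of $\PP^2$ at finitely many points. In each case $\X$ is rational, hence rationally connected in characteristic $0$. This is the only geometric input about $\X$ that we need.

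Second, we check that $(X,\epsilon D)$ is a klt Campana orbifold in the sense of Definition~\ref{def:orbifold}. Since $D$ is irreducible and SNC, it is smooth. The weight $\epsilon=1-1/m$ has $m\in\Z_{\geq1}$. Strong Campana uniruledness is then supplied by Corollary~\ref{cor:strongly-uniruled}, which rests on Theorem~\ref{thm:free-campana-curve}. Concretely, the free Campana curve obtained by gluing the curves of classes $N_j\gamma_j$ from Propositions~\ref{prop:conic} and~\ref{prop:line} has class in the interior of $\Nef_1(\X)$ by Lemma~\ref{lem:positivity}(2).

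Applying Corollary~\ref{cor:CRC} then gives Campana rational connectedness. By Proposition~\ref{prop:free-equiv}, this is equivalent in characteristic $0$ to the existence of a very free Campana curve. There is no substantive obstacle, since all the work was done in Theorem~\ref{thm:free-campana-curve}. The only point that needs care is that the hypotheses of \cite[Corollary~6.7]{chen2024campana} match our setting: the ground field is algebraically closed of characteristic $0$, the boundary is SNC with weights of the form $1-1/m$, and the notion of ``strongly Campana uniruled'' is exactly Definition~\ref{def:strongly-uniruled}. If one also wants the case of reducible boundary covered in Theorem~\ref{thmB}, the same two-line argument applies verbatim once Theorem~\ref{thmA} supplies strong Campana uniruledness there.
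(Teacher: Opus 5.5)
Your proposal is correct and matches the paper's proof: the paper likewise notes that $\X$ is rationally connected and that $(X,\epsilon D)$ is strongly Campana uniruled by Corollary~\ref{cor:strongly-uniruled}, and then applies \cite[Corollary~6.7]{chen2024campana}. Your additional remarks, namely the smoothness of $D$ and the reformulation via very free curves from Proposition~\ref{prop:free-equiv}, are harmless but not needed.
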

     
\begin{proof}
    The surface $\X$ is rationally connected and $(X,\epsilon D)$ is strongly
    Campana uniruled by Corollary~\ref{cor:strongly-uniruled}, so
    \cite[Corollary~6.7]{chen2024campana} applies.
\end{proof}
     
\begin{remark}[Reducible boundaries]\label{rem:reducible}
    Lemma~\ref{lem:degree}, Proposition~\ref{prop:conic} and
    Lemma~\ref{lem:positivity} are stated for an arbitrary SNC boundary
    $\Delta_\epsilon=\sum_i\epsilon_i\Delta_i$.  Consequently
    Theorem~\ref{thm:free-campana-curve} holds, with the same proof, for
    \emph{every} klt Campana orbifold $(X,\Delta_\epsilon)$ whose underlying surface
    is a del Pezzo surface of degree at most $6$, or is $\PP^1\times\PP^1$.  Only
    the auxiliary class $H$ of Proposition~\ref{prop:line} uses that the boundary is
    irreducible, so only the three surfaces $\PP^2$, the blow up of $\PP^2$ at one
    point and the del Pezzo surface of degree $7$ require an additional argument in
    the reducible case.  That argument cannot be avoided: for $\X=\PP^2$,
    $\Delta=L_1+\dots+L_4$ four general lines and $\epsilon_i=\frac12$, the pair is
    log Fano, but a general line meets $\Delta$ in $4$ points and the associated
    orbifold $\PP^1$ is Euclidean.  A conic $C'$ tangent to all four lines satisfies Campana conditions: such conics form a pencil, 
    $\mu_k=2$ for $k=1,\dots,4$, so
    \eqref{eq:contact} holds with $N=1$ and no cover is needed, while
    $(-K_{\X}-\Delta)\cdot C'+n-2=-2+4-2=0$. Thus $C'$ is itself a free Campana
    curve, of class $2H$, lying in the interior of $\Nef_1(X)$.
\end{remark}
     
\begin{remark}[On the classification]\label{rmk:classification-not-used}
    The proof above uses neither \cite{dandapat2026classification} nor the
    classification of irreducible boundaries carried out in the appendices: the
    inequality $\Delta_\epsilon\cdot F<-K_{\X}\cdot F$ supplied by ampleness
    replaces the case by case verification.  The classification results are of
    independent interest and are retained in the appendices.
\end{remark}

\section{Weak Approximation}\label{sec:wa}

In this section $k$ is an algebraically closed field of characteristic $0$ and $B$ is a smooth projective curve over $k$ and $K = k(B)$, and we use the 
same notation as in \S~\ref{ss:introweak}.

We recall the following theorem due to Colliot-Th\'el\`elene and Gille which proves weak approximation for for del Pezzo surfaces of degree $d \geq 4$. 
For smooth cubic hypersurfaces of dimension at least two defined over function field of a complex curve \cite[Theorem~1.2]{tian2015weak} proves weak 
approximation.

\begin{theorem}\cite[Section~2]{colliot2004remarques}
    \label{thm:CTG}
    Let $k$ be an algebraically closed field of characteristic $0$, let $B$ be a smooth projective curve over $k$ and let $K = k(B)$. Let $X$ be a 
    smooth del Pezzo surface of degree $d \geq 4$ over $K$. Then $X$ satisfies weak approximation, that is weak approximation holds for $X$ at every 
    place of $K$.
\end{theorem}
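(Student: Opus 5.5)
The plan is to split by degree and reduce everything to two facts. First, $K = k(B)$ and all completions $K_b \cong k((t))$ are $C_1$ fields (Tsen's theorem and Lang's theorem). Second, weak approximation is a $K$-birational invariant of smooth proper varieties, via the implicit function theorem over the complete fields $K_b$. The first step is to produce a $K$-point on $X$. This follows either from \cite[Theorem~1.1]{graber2003families} applied to a proper model over $B$, or directly from the $C_1$ property, since a del Pezzo surface of degree $d\ge 3$ is cut out by equations of low degree in $\PP^d$. Moreover $X(K)$ is Zariski dense, because $X$ is unirational once it has a point. We may therefore pick a $K$-point $x_0$ lying on no line of $X$ and off any prescribed proper closed subset.

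\emph{Degree $d\ge5$.} Here I would invoke the classical results of Enriques, Manin and Swinnerton-Dyer: a del Pezzo surface of degree $\ge 5$ with a rational point is $K$-rational. For $d=9$ the surface is a Severi--Brauer surface with a point, hence $\PP^2_K$. For $d=8$ I would blow up $x_0$ and contract to $\PP^2$ or to a quadric with a point. For $d=7,6,5$ the standard contractions, or the projection from $x_0$, give rationality. Since $\PP^2_K$ satisfies weak approximation (approximate the coordinates by the Chinese remainder theorem), birational invariance gives the claim.

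\emph{Degree $d=4$.} Blowing up $x_0$ gives a smooth cubic surface $Y$ with a $K$-rational line, namely the exceptional curve $E$. Projecting from $E$ makes $Y$ a conic bundle $\pi : Y\to\PP^1_K$, and $Y$ and $X$ satisfy weak approximation simultaneously. Let $V$ be a finite set of places and $y_b\in Y(K_b)$, $b \in V$, the local points to approximate. By the implicit function theorem I would move each $y_b$ slightly so that it lies on a smooth fibre and is a smooth point of $\pi$. Weak approximation on $\PP^1_K$ then gives $t\in\PP^1(K)$ close to every $\pi(y_b)$ and outside the finitely many singular values. The fibre $Y_t$ is a smooth conic over the $C_1$ field $K$, so $Y_t\cong\PP^1_K$. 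Because $\pi$ is smooth near $y_b$, the implicit function theorem yields points $y'_b\in Y_t(K_b)$ close to $y_b$. Weak approximation on $Y_t\cong\PP^1_K$ then gives a $K$-point of $Y$ close to all $y_b$.

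The step I expect to need the most care is this fibration argument. One must check that the approximating fibre still passes near each $y_b$: this uses smoothness of $\pi$ at $y_b$ and the openness of the topology on $Y(K_b)$. One must also check that the local points can first be moved into the smooth locus of $\pi$: this uses density of $Y^{\mathrm{sm}}(K_b)$ for the smooth variety $Y$ over the henselian field $K_b$. The choice of $x_0$ off the lines is needed so that the blow-up is again a del Pezzo surface, here the cubic $Y$, and the conic bundle structure exists. Everything else is formal.
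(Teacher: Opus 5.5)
Your proposal is correct and follows the same route as the paper. The common steps are birational invariance, $K$-rationality of del Pezzo surfaces of degree $\geq 5$ with a $K$-point, and, for $d=4$, blowing up a $K$-point off the sixteen lines to obtain a cubic surface with a $K$-rational line and hence a conic bundle over $\PP^1_K$. The only difference is that you prove weak approximation for that conic bundle directly by the fibration argument: smooth fibres over $K$-points are $\PP^1_K$ by Tsen, and the implicit function theorem moves the local points onto such a fibre. The paper instead cites \cite[Theorem~2.4]{colliot2004remarques} for this step.
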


Weak approximation is a $K$-birational invariant of smooth geometrically integral $K$-varieties \cite[Proposition~1.1]{colliot2004remarques}, and 
$\PP^2$ satisfies weak approximation. The field $K$ is $C_1$ by Tsen's theorem, so $X(K) \neq \emptyset$. Alternatively, this follows from 
\cite{graber2003families} since a del Pezzo surface is rationally connected. If $d \geq 5$, a del Pezzo surface of degree $d$ with a rational point is
$K$-birational to $\PP^2_K$ (see \cite[Chapter~IV]{manin1986cubic}), and the assertion follows from birational invariance. 

If $d = 4$, the anticanonical embedding realises $X$ as a smooth complete intersection of two quadrics in $\PP^4_K$. Since $K$ is infinite and $X(K)$ is Zariski dense \cite{graber2003families}, we may choose
a $K$-point $P \in X(K)$ lying on none of the sixteen lines of $X_{\overline{K}}$, and blowing up $X$ at $P$ produces a smooth cubic surface $Y \subset \PP^3_K$
containing a $K$-rational line, that is the exceptional curve over $P$. The pencil of planes through that line endows $Y$ with the structure of a conic
bundle over $\PP^1_K$, for which weak approximation holds by \cite[Theorem~2.4]{colliot2004remarques} together with Tsen's theorem. Birational invariance gives
the required result for $X$. If $d = 3$, then it is a smooth cubic surface in $\PP^3$ which is proved in \cite[Theorem~1.2]{tian2015weak}.

For $d \leq 2$ the corresponding statement is open in general \cite[\S~2]{colliot2004remarques}.

\subsection{Campana del Pezzo fibrations}\label{ss:dp-fib}

All Campana fibrations $\pi: (\cX, \Delta_{\epsilon}) \to B$ are assumed to be proper as in \cite[Section~3]{chen2024campana}.

\begin{definition}
    \label{def:dP-fibration}
    Let $\pi: (\cX, \Delta_{\epsilon}) \to B$ be a Campana fibration. We call $\pi$ a \emph{Campana del Pezzo fibration of degree $d$ with irreducible boundary}
    if for general $b \in B$ the fibre $(\cX_b, \Delta_{\epsilon}|_{\cX_b})$, with the weights induced from $\Delta_{\epsilon}$, is a del Pezzo orbifold
    of degree $d$ with irreducible boundary, in the sense of Definition~\ref{def:dP-orbifold}.  
\end{definition}

\begin{lemma}\label{lem:genericfibre}
    Let $\pi:  (\cX,\Delta_\epsilon) \to B$ be a Campana del Pezzo fibration of degree $d$. Then the generic fibre $\cX_K$ is a smooth del Pezzo 
    surface of degree $d$ over $K$.
\end{lemma}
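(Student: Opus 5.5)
The plan is to pass from the general closed fibre to the generic fibre through standard spreading-out and openness arguments. First, the generic fibre is geometrically integral and smooth. $\pi$ is proper and flat, $\cX$ is integral, and the general closed fibre $\cX_b$ is a smooth projective surface. The smooth locus of $\underline{\pi}$ is open in $\cX$ and contains these fibres. Its complement therefore has closed image in $B$ (by properness), and that image misses a dense open set. Hence there is a dense open $U\subset B$ with $\pi_U:\cX_U\to U$ smooth and proper, so $\cX_K$ is a smooth projective $K$-surface. Geometric integrality of $\cX_K$ follows because the geometric generic fibre is smooth and, by Stein factorization, connected: the general fibres are connected, so $\pi_*\OO_{\cX}=\OO_B$ over $U$.

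Second, I would show that $-K_{\cX_K}$ is ample and compute its degree. Over $U$ the relative canonical sheaf $\omega_{\cX_U/U}$ is a line bundle, and it is compatible with base change to every fibre: $\omega_{\cX_U/U}|_{\cX_b}\cong\omega_{\cX_b}$ and $\omega_{\cX_U/U}|_{\cX_K}\cong\omega_{\cX_K}$. Ampleness of a line bundle on the fibres of a proper flat morphism is an open condition on the base (EGA III, 4.7.1). Since $\omega^{-1}$ is ample on the fibres over a nonempty set of closed points, it is ample on the fibres over an open neighbourhood, and that neighbourhood contains the generic point. So $-K_{\cX_K}$ is ample.

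Third, for the degree I would use that intersection numbers of line bundles in a flat proper family are locally constant. Concretely, $\chi(\omega^{\otimes n}|_{\cX_s})$ is locally constant in $s$ by flatness, and $K^2$ is read off from this polynomial in $n$. This gives $K_{\cX_K}^2=K_{\cX_b}^2=d$.

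The main obstacle is purely bookkeeping: "general" in Definition~\ref{def:dP-fibration} refers to closed points, and we need a dense open set of closed points on which both smoothness and ampleness hold simultaneously. Intersecting the two open sets obtained above handles this. No orbifold data are needed, since the statement concerns only the underlying surface.
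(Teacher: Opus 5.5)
Your proposal is correct and follows the same route as the paper: smoothness of the generic fibre, openness of fibrewise ampleness of $\omega^{-1}$ in a proper family (passing from general closed fibres to the generic point), and local constancy of $K^2$ over the smooth proper locus. Your version adds the details the paper's short proof leaves implicit: the spreading-out of smoothness, geometric integrality via Stein factorization, the Euler-characteristic computation of $K^2$, and the ampleness deduction stated in the right direction.
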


\begin{proof}
    Since $\operatorname{char}k=0$ and the generic fibre of $\pi$ is a smooth projective surface, the generic fibre $\cX_K$ is a smooth projective 
    geometrically integral surface over $K$. Ampleness is an open condition in a proper flat family, so $-K_{\cX_b}$ ample for general $b\in B$. 
    Hence $-K_{\cX_K}$ is ample. Again, $K_{\cX_K}^2 = K_{\cX_b}^2 = d$, since the self-intersection of the relative canonical class is locally 
    constant on the fibres of a smooth proper family of surfaces. Hence $\cX_K$ is a del Pezzo surface of degree $d$ over $K$.
\end{proof}

\subsection{Proof of Theorem~\ref{thmE}}\label{ss:thmE}

\begin{theorem}
    \label{thm:mainWA}
    Let $k$ be an algebraically closed field of characteristic $0$, let $B$ be a smooth projective curve over $k$, and let $\pi: (\cX, \Delta_{\epsilon}) \to B$
    be a Campana del Pezzo fibration of degree $d \geq 3$ with irreducible boundary. Fix closed points $p_1, \ldots, p_r \in \cX$ lying on distinct 
    fibers of $\pi$ and for each $j$, an admissible Campana $n_j$-th jet $\sigma_j$ at $p_j$. Then there is a Campana section of $\pi$ approximating 
    the jet data $\{ p_j, \sigma_j \}$.
\end{theorem}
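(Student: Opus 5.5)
The plan is to deduce the statement from Theorem~\ref{thm:WA}, that is \cite[Theorem~6.4]{chen2024campana}, applied with $S=\emptyset$. That theorem has three hypotheses: a general fibre is rationally connected, a general fibre is strongly Campana uniruled, and the underlying fibration $\underline{\pi}$ satisfies weak approximation outside $S$. With $S$ empty, the conclusion is exactly approximation of arbitrary Campana jet data on distinct fibres, which is the assertion. So the proof reduces to checking these three hypotheses.

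\emph{Hypotheses on the general fibre.} For general $b\in B$, the fibre $(\cX_b,\Delta_\epsilon|_{\cX_b})$ is by Definition~\ref{def:dP-fibration} a del Pezzo orbifold of degree $d\geq 3$ with irreducible boundary. Its underlying surface is a smooth del Pezzo surface, hence rational and in particular rationally connected. Strong Campana uniruledness follows from Corollary~\ref{cor:strongly-uniruled}, or equivalently Theorem~\ref{thm:free-campana-curve}. Irreducibility of the boundary is only needed when $d\geq 7$; for $d\leq 6$ one could instead invoke Remark~\ref{rem:reducible}.

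\emph{Weak approximation for $\underline{\pi}$ at every place.} By Lemma~\ref{lem:genericfibre}, the generic fibre $\cX_K$ is a smooth del Pezzo surface of degree $d\geq 3$ over $K=k(B)$. For $d\geq 4$, Theorem~\ref{thm:CTG} gives weak approximation for $\cX_K$ at every place of $K$. For $d=3$, $\cX_K$ is a smooth cubic surface, and \cite[Theorem~1.2]{tian2015weak} gives the same conclusion, as recorded in the discussion following Theorem~\ref{thm:CTG}.

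The main obstacle is converting this arithmetic statement about $X(K)\subset\prod_b X(K_b)$ into the geometric input that Theorem~\ref{thm:WA} actually requires, namely that every admissible jet of $\underline{\pi}$ at any point of any fibre, including singular fibres, is realised by a section. I would argue through the dictionary of \S\ref{ss:introweak}. By the valuative criterion of properness, points of $X(K_b)$ correspond to sections of $\cX\times_B\operatorname{Spec}\widehat{\OO_b}$, and admissible $n$-jets are exactly the truncations of such local sections. Density of $X(K)$ in $\prod_{b\in V}X(K_b)$ in the $v_b$-adic topology then gives global sections agreeing with prescribed local sections to any order at finitely many places, which is precisely weak approximation for $\underline{\pi}$ in the sense used in \cite[Section~6]{chen2024campana}, with no bad set. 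One subtlety needs care: $K_b$-points must be read as sections of the chosen proper model, including its singular fibres, and not of a smooth model. Once this identification is in place, Theorem~\ref{thm:WA} with $S=\emptyset$ yields a Campana section of $\pi$ approximating $\{p_j,\sigma_j\}$.
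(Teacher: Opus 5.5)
Your proposal is correct and follows the paper's proof. The paper likewise checks the general-fibre hypotheses of Theorem~\ref{thm:WA} (rational connectedness, and strong Campana uniruledness via Theorem~\ref{thmA}), uses Lemma~\ref{lem:genericfibre} to get weak approximation for $\cX_K$ at every place, and applies Theorem~\ref{thm:WA} with $S=\emptyset$. Your explicit appeal to \cite[Theorem~1.2]{tian2015weak} for $d=3$ is more careful than the written proof, which writes ``$d\geq 4$'' and cites only Theorem~\ref{thm:CTG}, even though the statement allows $d=3$.
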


\begin{proof}
    Let $b \in B$ be general point. By Definition~\ref{def:dP-fibration} the fibre $(\cX_b,\Delta_\epsilon|_{\cX_b})$ is a del Pezzo orbifold with 
    irreducible boundary, so it is strongly Campana uniruled by Theorem~\ref{thmA}, and its underlying surface $\cX_b$ is a del Pezzo surface, hence 
    rationally connected. Thus the hypothesis of Theorem~\ref{thm:WA} on a general fibre of $\pi$ is satisfied.
    
    By Lemma~\ref{lem:genericfibre} the generic fibre $\cX_K$ is a smooth del Pezzo surface of degree $d \geq 4$ over $K = k(B)$, so $\cX_K$ satisfies 
    weak approximation at every place of $K$ by Theorem~\ref{thm:CTG}. Hence Theorem~\ref{thm:WA} applies with $S = \emptyset$ and produces a Campana
    section approximating $\{ p_j, \sigma_j \}$.
\end{proof}

\begin{corollary}\label{cor:existence}
    In the situation of Theorem~\ref{thm:mainWA}, Campana sections of $\pi$ exist. Moreover, for every point $p \in \cX \setminus \Delta$ at which 
    $\pi$ is smooth there exists a Campana section of $\pi$ passing through $p$.  In particular the union of the images of the Campana sections of 
    $\pi$ is Zariski dense in $\cX$.
\end{corollary}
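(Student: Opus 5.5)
The plan is to deduce Corollary~\ref{cor:existence} from Theorem~\ref{thm:mainWA} by applying it to a single piece of jet data.

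\emph{Existence.} Pick a general closed point $b \in B$, so that $\pi$ is smooth along $\cX_b$, and pick any point $p \in \cX_b \setminus \Delta$. Such a point exists because $\Delta$ does not contain a whole general fibre, since the fibre's boundary is a curve. Take $\sigma$ to be a $0$-th jet at $p$, that is, just the point $p$. Because $p \notin \Delta$, all contact orders at $p$ vanish, so the Campana condition holds trivially. Admissibility of a $0$-jet at a smooth point of $\pi$ is automatic, since any local section through $p$ exists by smoothness. Theorem~\ref{thm:mainWA} with $r = 1$ then produces a Campana section through $p$. In particular, Campana sections exist.

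\emph{Sections through every admissible point.} Now take an arbitrary point $p \in \cX \setminus \Delta$ at which $\pi$ is smooth. This case is handled the same way: the $0$-jet at $p$ is admissible because $\pi$ is smooth at $p$, and it is Campana because $p \notin \Delta$. Theorem~\ref{thm:mainWA} applies, with no restriction on which fibre $p$ lies in. Its hypotheses only require distinct fibres, and a single point trivially satisfies this. So the theorem yields a Campana section through $p$.

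\emph{Density.} The set of points $p \in \cX \setminus \Delta$ at which $\pi$ is smooth is a nonempty open subset of the integral scheme $\cX$, so it is dense. Every such point lies on the image of some Campana section, so the union of these images contains a dense set and is therefore Zariski dense.

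\emph{Main obstacle.} The only point needing care is confirming that the $0$-th jet at $p$ meets the definition of an admissible Campana jet in \cite[Definition~6.2]{chen2024campana}. I expect this to follow directly from smoothness of $\pi$ at $p$ together with $p \notin \Delta$. If that definition instead requires $n \geq 1$, I would instead lift $p$ to a first-order jet, using a tangent vector transverse to the fibre, which exists by smoothness. That jet still avoids $\Delta$, so the Campana condition again holds vacuously.
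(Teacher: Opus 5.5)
Your proposal is correct and follows essentially the same route as the paper: apply Theorem~\ref{thm:mainWA} to the $0$-jet at a point $p \in \cX \setminus \Delta$ where $\pi$ is smooth (admissible by smoothness, Campana since $p \notin \Delta$), and get density because such points form a dense open subset of $\cX$. The only cosmetic difference is that the paper gets bare existence by taking $r=0$ (empty jet data), while you use a single $0$-jet at a general point; this is equally valid and avoids relying on the degenerate case $r=0$.
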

    
\begin{proof}
    Take $r = 0$ in Theorem~\ref{thm:mainWA}, that is trivial jet data, produces a Campana section of $\pi$.
    
    Now, let $p \in \cX \setminus \Delta$ be a point at which $\pi$ is smooth, and let $\sigma$ be the $0$-jet at $p$. Since $\pi$ is smooth at $p$, 
    the jet $\sigma$ is admissible. Since $p \notin \Delta$, the local intersection multiplicity of $\sigma$ with every boundary component of $\Delta$ 
    vanishes, so $\sigma$ trivially satisfies the Campana condition of \cite[Definition~6.2]{chen2024campana} and is an admissible Campana jet. 
    By Theorem~\ref{thm:mainWA} there exists a Campana section of $\pi$ inducing $\sigma$, that is passing through $p$.
    
    Now, since $\pi$ is smooth over a dense open subset of $B$ and $\Delta$ is a proper closed subset of $\cX$, the set of points $p$ as above is 
    a dense open subset of $\cX$. 
\end{proof}

\begin{remark}(Lower degree del Pezzo orbifolds)
    \label{rem:arbdegree}
    The degree $d \geq 3$ is used only due to application of Theorem~\ref{thm:CTG}. The proof of Theorem~\ref{thm:mainWA} therefore gives the following 
    statement in arbitrary degree.

    \emph{Let $\pi: (\cX,\Delta_\epsilon) \to B$ be a Campana del Pezzo fibration with irreducible boundary, and let $S$ be a finite set of places of 
    $K=k(B)$ such that the generic fibre $\cX_K$ satisfies weak approximation at every finite set of places disjoint from $S$. Then every finite 
    collection of admissible Campana jets supported on distinct fibres over $B \setminus S$ is induced by a Campana section of $\pi$.}

    One may take $S = \emptyset$ in each of the following cases:
    \begin{enumerate}
        \item $d \geq 4$, by \cite{colliot2004remarques}, this is Theorem~\ref{thm:mainWA}.
        \item $d = 3$ and $\cX_K$ is a smooth cubic surface with square-free discriminant, by \cite{hassett2008approximation}. More generally, for any 
        smooth cubic hypersurface by \cite{tian2015weak}.
        \item $d = 2$ and $\cX_K$ has square-free discriminant, by \cite{knecht2013weak}.
        \item $d = 1, 2$ and $\pi$ can be completed to a sufficiently generic family in the relevant parameter space over $B$, by \cite{xu12}.
    \end{enumerate}
    In general we may take $S$ to be the set of places of bad reduction, by \cite[Theorem~3]{hassett2005}, that is Corollary~\ref{corC}. The conjecture 
    of Hassett and Tschinkel \cite[Section~1]{hassett2005} that every smooth rationally connected variety over $K$ satisfies weak approximation at all 
    places would imply, in combination with Theorem~\ref{thmA} and Theorem~\ref{thm:WA}, weak approximation at all places for Campana sections of 
    \emph{every} Campana del Pezzo fibration with irreducible boundary.
\end{remark}

\appendix

\section{Surfaces of degree 6}\label{app:six}

The classification carried out in the appendices are based on the following elementary observations. Throughout, $X$ is a del Pezzo surface of degree 
$d = K_X^2$ and $\epsilon = 1 - 1/m$ with $m \in \mathbb{Z}_{\geq 2}$, so that $\tfrac12 \le \epsilon<1$. We write $\mathcal{R}(X)$ for the finite set 
of generators of the extremal rays of $\mathrm{Eff}(X)$ listed in Lemma~\ref{lem:dP-cones}. Recall that on a surface $\mathrm{Eff}(X)=\overline{NE}(X)$, 
so a divisor is nef if and only if it is non-negative on $\mathcal{R}(X)$, and ample if and only if it is positive on $\mathcal{R}(X)$, the ample cone 
being the interior of the nef cone. Also recall that every irreducible curve $D$ on a del Pezzo surface $X$ is either a $(-1)$-curve or nef. 

\begin{lemma}\label{lem:criterion}
    Let $D$ be a non-zero effective divisor on $X$ and let $0 < \epsilon <1$.
    \begin{enumerate}
        \item $-(K_X+\epsilon D)$ is nef (resp.\ ample) if and only if $\epsilon\,(D\cdot R)\leq -K_X\cdot R$ (resp.\ $<$) for every 
        $R\in\mathcal{R}(X)$.
        \item Put $$\tau(D):=\min\Bigl\{\tfrac{-K_X\cdot R}{D\cdot R}\ :\ R \in \mathcal{R}(X),\ D\cdot R>0\Bigr\}\in\mathbb{Q}_{>0}\cup\{+\infty\},$$
        the minimum over the empty set being $+\infty$. Then $-(K_X+\epsilon D)$ is ample if and only if $\epsilon<\tau(D)$, and nef if and only 
        if $\epsilon\le\tau(D)$.
        \item If $-(K_X+\epsilon D)$ is nef, then it is big if and only if $d-2\epsilon\,(-K_X\cdot D)+\epsilon^{2}D^{2}>0$, and
        $$-K_X\cdot D\ \le\ \frac{d}{\epsilon}\ \le\ 2d \qquad\text{whenever }\epsilon\ge\tfrac12 .$$
    \end{enumerate}
\end{lemma}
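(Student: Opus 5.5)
Parts (1) and (2) are formal consequences of the description of the nef and ample cones recalled above. On a surface $\overline{\operatorname{Eff}}(X)=\overline{NE}(X)$ is rational polyhedral and generated by $\mathcal{R}(X)$. Hence a divisor class $L$ is nef iff $L\cdot R\ge 0$ for all $R\in\mathcal{R}(X)$, and ample iff $L\cdot R>0$ for all such $R$. The ampleness statement uses Kleiman's criterion: the ample cone is the interior of the nef cone, and for a polyhedral cone the interior is cut out by strict inequalities on the generators of the dual cone.

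Applying this to $L=-(K_X+\epsilon D)$ gives (1) directly, since $L\cdot R=-K_X\cdot R-\epsilon\,D\cdot R$. For (2) I would split $\mathcal{R}(X)$ according to the sign of $D\cdot R$.
\begin{itemize}
\item If $D\cdot R\le 0$, the inequality in (1) holds automatically, because $-K_X\cdot R>0$ ($-K_X$ is ample and $R$ is a nonzero effective class).
\item If $D\cdot R>0$, the inequality $\epsilon D\cdot R<-K_X\cdot R$ is equivalent to $\epsilon<(-K_X\cdot R)/(D\cdot R)$, and likewise with $\le$.
\end{itemize}
Taking the minimum over the finitely many $R$ with $D\cdot R>0$ gives exactly $\epsilon<\tau(D)$ (resp.\ $\le$). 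Positivity and rationality of $\tau(D)$ are clear, and the convention $\min\emptyset=+\infty$ covers the case where no $R$ has $D\cdot R>0$.

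For (3), the bigness criterion is the standard fact that a nef divisor $L$ on a surface is big iff $L^2>0$. Expanding gives
\[
L^2=(K_X+\epsilon D)^2=d+2\epsilon K_X\cdot D+\epsilon^2D^2=d-2\epsilon(-K_X\cdot D)+\epsilon^2D^2.
\]
For the bound, which is the only step needing an idea, I would not use $D^2$; I would use nefness of $L$ against the ample class $-K_X$. Since $L$ is nef and $-K_X$ is ample, hence nef, we get $L\cdot(-K_X)\ge 0$, that is,
\[
d-\epsilon(-K_X\cdot D)\ge 0 .
\]
This gives $-K_X\cdot D\le d/\epsilon$, and $d/\epsilon\le 2d$ when $\epsilon\ge\tfrac12$. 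So the main obstacle is only recognizing that the bound should come from pairing with $-K_X$ rather than from the quadratic bigness expression; no case analysis is needed.
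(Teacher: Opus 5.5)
Your proof is correct. It is the elementary argument the paper intends; the paper itself only writes \emph{The proof is elementary} and gives no details. Parts (1)--(2) follow from Kleiman's criterion together with the polyhedral description $\overline{NE}(X)=\overline{\operatorname{Eff}}(X)=\operatorname{cone}(\mathcal{R}(X))$ and $-K_X\cdot R>0$. For (3) you use the criterion $L^2>0$ for bigness of a nef $L$, and you get the bound $-K_X\cdot D\le d/\epsilon$ from $L\cdot(-K_X)\ge 0$.
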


\begin{proof}
    The proof is elementary.
\end{proof}

\begin{corollary}\label{cor:mu}
    Assume $1\le d\le7$, so that $\mathcal{R}(X)$ is the set of $(-1)$-curves, and let $D$ be an irreducible curve on $X$. Put
    $\mu(D):=\max\{D\cdot E\ :\ E\ \text{a }(-1)\text{-curve on }X\}$.  Then $\mu(D)\ge1$, $\tau(D)=1/\mu(D)$, and for $\epsilon=1-1/m$ with $m\ge2$:
    \begin{enumerate}
        \item $-(K_X+\epsilon D)$ is ample for every $m\ge2$ if and only if $\mu(D)=1$;
        \item $-(K_X+\epsilon D)$ is nef but not ample if and only if $\mu(D)=2$ and
            $m=2$;
        \item $-(K_X+\epsilon D)$ is not nef for any $m\ge2$ if and only if
            $\mu(D)\ge3$.
    \end{enumerate}
\end{corollary}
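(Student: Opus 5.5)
The plan is to reduce everything to Lemma~\ref{lem:criterion}(2) applied with $\mathcal{R}(X)$ the set of $(-1)$-curves, and then to exploit the fact that $-K_X\cdot E=1$ for every $(-1)$-curve $E$. Since $1\le d\le 7$, Lemma~\ref{lem:dP-cones} identifies $\mathcal{R}(X)$ with the $(-1)$-curves. For each $E\in\mathcal{R}(X)$ with $D\cdot E>0$, the ratio $(-K_X\cdot E)/(D\cdot E)$ equals $1/(D\cdot E)$. Hence
\[
\tau(D)=\min\{1/(D\cdot E): D\cdot E>0\}=1/\mu(D),
\]
as soon as some $(-1)$-curve meets $D$ positively.

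The first step is to show $\mu(D)\ge1$. I would split into two cases according to the remark preceding the lemma: every irreducible curve is either a $(-1)$-curve or nef.

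If $D$ is itself a $(-1)$-curve, take another $(-1)$-curve $E'$ meeting it. For $d\le 7$, contract a different exceptional curve, or use that $H-E_1-E_2$ meets $E_1$ when $d=7$. In the general blow-up model, $D=E_i$ is met by the strict transform of a line through $p_i$ and another $p_j$, so $D\cdot E'=1$.

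If $D$ is nef and nonzero, then $D\cdot E\ge0$ for all $E$. If $D\cdot E=0$ for every $(-1)$-curve, then $D$ would be orthogonal to all generators of $\overline{\operatorname{Eff}}(X)$. That forces $D\cdot D=0$ and $D\cdot(-K_X)=0$, since $-K_X$ is effective hence a positive combination of generators. This contradicts ampleness of $-K_X$.

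So $\mu(D)\ge1$, the set in the definition of $\tau$ is nonempty, and $\tau(D)=1/\mu(D)$.

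The three items then follow from comparing $\epsilon=1-1/m$, which lies in $[\tfrac12,1)$, with $1/\mu$:
\begin{enumerate}
\item If $\mu=1$, then $\epsilon<1=\tau$ always, so $-(K_X+\epsilon D)$ is ample for every $m$.
\item If $\mu=2$, then $\tau=\tfrac12$. In this case $\epsilon\le\tau$ holds exactly when $m=2$, with equality, so $-(K_X+\epsilon D)$ is nef but not ample; it is never ample.
\item If $\mu\ge3$, then $\tau\le\tfrac13<\tfrac12\le\epsilon$, so $-(K_X+\epsilon D)$ is never nef.
\end{enumerate}
Since the cases $\mu=1$, $\mu=2$ and $\mu\ge3$ partition the possibilities, each implication upgrades to an ``if and only if''. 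For instance, if $-(K_X+\epsilon D)$ is ample for every $m$, then $\mu\ne2$ and $\mu\not\ge3$, so $\mu=1$.

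The only genuinely non-formal point is $\mu(D)\ge1$, in particular the nef case. There I would invoke the positivity argument via $-K_X$ being a positive combination of $(-1)$-curves, checking separately that this holds for $d=7$, where the generators $E_1$, $E_2$ and $H-E_1-E_2$ sum with positive coefficients to $-K_X$.
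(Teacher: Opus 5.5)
Your proof is correct and follows the paper's argument: $\tau(D)=1/\mu(D)$ from $-K_X\cdot E=1$, then $\mu(D)\ge1$ by writing $-K_X$ as a non-negative combination of $(-1)$-curves and using $-K_X\cdot D>0$, then comparing $\epsilon\in[\tfrac12,1)$ with $1/\mu(D)$. The one difference is that the paper runs the positivity argument directly under the assumption $D\cdot E\le0$ for all $(-1)$-curves $E$, which already covers the case where $D$ is itself a $(-1)$-curve, so your separate construction of a $(-1)$-curve meeting $D$ is unnecessary.
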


\begin{proof}
    Since $-K_X\cdot E = 1$ for every $(-1)$-curve $E$, By Lemma~\ref{lem:criterion}(2), $\tau(D)=1/\mu(D)$ when $\mu(D)\ge1$. If $D\cdot E\le0$ 
    for every $(-1)$-curve $E$ then, by writing the effective class $-K_X$ as a non-negative combination of $(-1)$-curves, we get $-K_X\cdot D\le0$, 
    contradicting the ampleness of $-K_X$, hence $\mu(D) \ge 1$. Now (1)-(3) follow from $\tfrac12\le\epsilon<1$: we have $\epsilon<1=\tau(D)$ always 
    when $\mu(D)=1$, $\epsilon\le\tfrac12=\tau(D)$ exactly for $m=2$ when $\mu(D)=2$, and $\tau(D)\le\tfrac13<\epsilon$ when $\mu(D)\ge3$.
\end{proof}

We also record the following elementary fact about a del Pezzo surface of degree $d \geq 3$.

\begin{lemma}\label{lem:irr-members}
    Let $X$ be a del Pezzo surface of degree $d \ge 3$ and let $D$ be a non-zero nef class on $X$.
    \begin{enumerate}
        \item $|D|$ is base point free.
        \item If $D^{2}>0$, the general member of $|D|$ is a smooth irreducible curve.
        \item If $D^{2}=0$, then $D=kF$ with $F$ a smooth fiber of a conic fibration and $k\ge1$, the class $D$ is represented by an irreducible curve 
        if and only if $k=1$.
    \end{enumerate}
\end{lemma}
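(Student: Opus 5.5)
The plan is to reduce everything to the structure of $-K_X$ and the anticanonical model, using the standard facts for $d\ge 3$: $-K_X$ is very ample, and $X$ is rational with nef cone dual to the cone generated by $(-1)$-curves (Lemma~\ref{lem:dP-cones}).

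For (1), I would first show that a nef class $D\neq 0$ on a del Pezzo surface satisfies $h^1(D)=h^2(D)=0$. This follows from Kawamata--Viehweg vanishing, since $D-K_X$ is ample. Hence $h^0(D)=1+\tfrac12 D\cdot(D-K_X)\ge 2$ by Riemann--Roch. I would then argue base point freeness by induction on $\rho$, i.e.\ on $9-d$.
\begin{itemize}
\item If $D\cdot E=0$ for some $(-1)$-curve $E$, then $D=\pi^*D'$ for the contraction $\pi\colon X\to X'$ of $E$. The surface $X'$ is del Pezzo of degree $d+1$, and $D'$ is nef there, so base point freeness descends from $X'$ to $X$.
\item Otherwise $D\cdot E\ge 1$ for every $(-1)$-curve $E$. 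I would then write $D=-K_X+D_1$ with $D_1$ nef, or treat the case where $D$ is a multiple of $-K_X$ directly.
\end{itemize}
The decomposition in the second case is the delicate step. To handle it I would instead use Reider's theorem: for $D-K_X$ with $L=D-K_X$ ample and $L^2\ge 5$, a base point of $|K_X+L|=|D|$ forces a curve $C$ with $L\cdot C=0,\ C^2=-1$, or $L\cdot C=1,\ C^2=0$. The first is impossible since $L$ is ample. The second forces $-K_X\cdot C\le 1$ with $C^2=0$, which contradicts adjunction: $C^2+K_X\cdot C$ must be even and $\ge -2$, while $-K_X\cdot C=1$ and $C^2=0$ give odd genus parity. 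The remaining small cases with $L^2\le 4$ do not occur for $d\ge 3$ and $D\ne0$, because $L^2=D^2-2K_X\cdot D+d\ge 2+3=5$, using that $-K_X\cdot D\ge 1$ for $D\neq 0$ nef. So I would rely on Reider and expect the main obstacle to be precisely this numerical check.

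For (2), base point freeness together with $D^2>0$ makes $\varphi_{|D|}$ map onto a surface. Bertini then gives that the general member is smooth, and irreducible because the image is not a curve. Connectedness follows from $h^1(-D)=0$ by Kawamata--Viehweg applied to $-D$ being anti-nef and big, via Serre duality with $K_X+D$.

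For (3), $D^2=0$ and base point freeness give a morphism $\varphi_{|D|}$ to a curve. That curve is $\PP^1$ since $X$ is rational, so its Stein factorization yields a fibration $X\to\PP^1$ with fibre class $F$ and $D=kF$. The general fibre $F$ has $F^2=0$, and by adjunction $-K_X\cdot F=2$, so $F$ is the smooth fibre of a conic fibration (Lemma~\ref{lem:conic-class}). Finally, $h^0(kF)=k+1$, and every member of $|kF|$ is a sum of $k$ fibres. Hence an irreducible representative exists only when $k=1$.
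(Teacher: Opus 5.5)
The paper does not prove this lemma: Appendix~\ref{app:six} records it as an ``elementary fact'' and gives no argument. There is therefore nothing to compare against, and I have checked your proof on its own terms. It is correct.

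\textbf{Part (1).} The induction on $\rho$ you start with is unnecessary, and its second case is never finished. The Reider argument alone settles (1) for every nonzero nef $D$:
\begin{itemize}
\item $L=D-K_X$ is ample and $K_X+L=D$.
\item $L^2=D^2+2(-K_X\cdot D)+d\ge 0+2+3=5$.
\item The alternative $L\cdot C=0$ is excluded by ampleness of $L$.
\item The alternative $L\cdot C=1$, $C^2=0$ forces $D\cdot C=0$ and $-K_X\cdot C=1$. This contradicts the evenness of $C\cdot(C+K_X)$, which holds for every divisor by Riemann--Roch.
\end{itemize}
Drop the claim $C\cdot(C+K_X)\ge -2$. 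It fails for disconnected effective divisors (two disjoint $(-1)$-curves give $-4$), and you only need parity. Reider's theorem rests on Bogomolov's inequality, so you are implicitly in characteristic $0$; this matches the paper's conventions.

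\textbf{Part (3).} It is the base of the Stein factorization that is $\PP^1$: it is a normal curve dominated by a rational surface. The image of $\varphi_{|D|}$ itself may be a singular rational curve. The clean way to say that every member of $|kF|$ is a sum of $k$ fibres is $|kF|=\psi^*|\OO_{\PP^1}(k)|$, which holds because $h^0(kF)=k+1$ and $\psi^*$ is injective on sections.

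\textbf{Part (2).} This is fine. Irreducibility of the general member already follows from Bertini's irreducibility theorem, since the image of $\varphi_{|D|}$ is a surface. Your $h^1(-D)=0$ connectedness argument is a correct but redundant second route.

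\textbf{What the approach buys.} It shows that the degree hypothesis enters only through $L^2\ge5$. In fact the lemma also holds for $d=2$: if $D^2=0$, parity forces $-K_X\cdot D\ge 2$, so $L^2\ge 6$. For $d=1$ it genuinely fails, since $D=-K_X$ has a base point and $L^2=4$.

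Your part (3) also runs in the opposite direction to the paper's Lemma~\ref{lem:conic-class}. There, one starts from a conic class and proves base point freeness by analysing the fixed part. You start from an arbitrary nef class with $D^2=0$ and extract the conic fibration from $\varphi_{|D|}$. That is exactly the direction the appendices need to exclude classes such as $2(H-E_1)$.
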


Let $X$ be a del Pezzo surface of degree $6$, that is the blow up $\beta: X \to \mathbb{P}^{2}$ at three points in general position, with
$K_X=-3H+E_1+E_2+E_3$. There are six $(-1)$-curves on $X$, namely $E_1,E_2,E_3$ and $L_{ij} := H-E_i-E_j$ for $1\le i<j\le3$. Throughout, $i,j,k$ denote 
pairwise distinct elements of $\{1,2,3\}$, $D$ is an irreducible curve on $X$, and $\epsilon=1-1/m$ with $m\in\mathbb{Z}_{\ge2}$.
In Table~\ref{tab:d6} we use the abbreviations
\begin{itemize}
    \item[\textup{(a)}] $-(K_X+\epsilon D)$ is ample for every $m\ge2$.
    \item[\textup{(c)}] $-(K_X+\epsilon D)$ is nef if and only if $m=2$, never ample, and big for $m=2$.
    \item[\textup{(d)}] $-(K_X+\epsilon D)$ is nef if and only if $m=2$, never ample, and not big.
\end{itemize}

\begin{proposition}\label{prop:dp6}
    Let $X$ be a del Pezzo surface of degree $6$, let $D \subset X$ be an irreducible curve and let $\epsilon=1-1/m$ with $m\ge2$. If 
    $-(K_X+\epsilon D)$ is nef, then the class of $D$ occurs in Table~\textup{\ref{tab:d6}}, and the positivity of $-(K_X+\epsilon D)$ is the
    one recorded there.  Conversely every class in the table is represented by an irreducible curve. In particular $(X,\epsilon D)$ is a del Pezzo 
    orbifold exactly for the seven classes with $\mu(D)=1$, for every $m\ge2$, and $-(K_X+\epsilon D)$ is nef but not ample exactly for the 
    thirteen classes with $\mu(D)=2$, and then only for $m=2$.
\end{proposition}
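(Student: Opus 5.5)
The plan is to reduce everything to a finite lattice computation with the six $(-1)$-curves $E_1,E_2,E_3,L_{12},L_{13},L_{23}$. By Corollary~\ref{cor:mu}, $-(K_X+\epsilon D)$ nef for some $m\ge2$ forces $\mu(D)\le2$. Moreover, $\mu(D)=1$ gives ampleness for every $m$, and $\mu(D)=2$ gives nef-but-not-ample exactly for $m=2$. So the proposition reduces to two tasks: list the classes of irreducible curves $D$ with $\mu(D)\le 2$, and then sort the $\mu(D)=2$ classes by bigness.

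First I treat the case that $D$ is a $(-1)$-curve, using the recalled fact that every irreducible curve on $X$ is either a $(-1)$-curve or nef. The six $(-1)$-curves form a hexagon. For such a $D$ one has $D\cdot D=-1$, $D$ meets its two neighbours with intersection $1$, and it is disjoint from the other three. Hence $\mu(D)=1$ for all of them.

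Next suppose $D$ is nef, and write $D=aH-b_1E_1-b_2E_2-b_3E_3$. Then
\[
D\cdot E_i=b_i,\qquad D\cdot L_{ij}=a-b_i-b_j .
\]
So nefness gives $b_i\ge0$ and $a\ge b_i+b_j$, while $\mu(D)\le2$ gives $b_i\le2$ and $a\le 2+\min_{i<j}(b_i+b_j)$. These constraints leave only finitely many $(a;b_1,b_2,b_3)$, which I enumerate by the value of $\max b_i\in\{0,1,2\}$ and then bound $a$ between $\max(b_i+b_j)$ and $2+\min(b_i+b_j)$.

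For each surviving class I decide irreducibility with Lemma~\ref{lem:irr-members}. If $D^2>0$, the general member of $|D|$ is smooth irreducible. If $D^2=0$, I must check that $D$ is a primitive conic class ($H-E_i$, or $2H-E_1-E_2-E_3-E_k$-type classes, which are in fact $H-E_i$ up to the Cremona symmetry). Classes $kF$ with $k\ge2$ are discarded. Bigness for the $\mu(D)=2$ classes at $\epsilon=\tfrac12$ is decided by Lemma~\ref{lem:criterion}(3), that is by the sign of
\[
6-(-K_X\cdot D)+\tfrac14D^2 ,
\]
which separates type (c) from type (d). I then organise the list modulo the $S_3\times\mathbb Z/2$ symmetry of the hexagon, namely permutations of the $E_i$ and the Cremona involution $H\mapsto 2H-E_1-E_2-E_3$, $E_i\mapsto H-E_j-E_k$. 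This keeps the bookkeeping short and lets me match orbit sizes against the counts seven and thirteen claimed in the statement.

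The main obstacle is the bookkeeping. Specifically, I must make sure the enumeration of nef classes with $\mu\le2$ is exhaustive and correctly deduplicated under the symmetry, and that the irreducibility criterion for the $D^2=0$ classes is applied correctly. The counts in the final sentence of the statement must agree with Table~\ref{tab:d6}, so I would double-check the $\mu=1$ list against the table entry by entry. This includes whether the $(-1)$-curves are counted among the seven, since a naive count of all $\mu=1$ classes produces more than seven. Any discrepancy there signals either a convention in the table or an error in the enumeration.
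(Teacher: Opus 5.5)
Your proposal is correct and is essentially the paper's proof. You reduce to $\mu(D)\le 2$ by Corollary~\ref{cor:mu}, dispose of the $(-1)$-curves, enumerate the nef classes with $b_i\le 2$ and $\max(b_i+b_j)\le a\le 2+\min(b_i+b_j)$ (ten triples and twenty classes, of which $0$ and $2(H-E_1)$ are discarded), and settle irreducibility and bigness with Lemma~\ref{lem:irr-members} and Lemma~\ref{lem:criterion}(3). Two small bookkeeping corrections: the counts seven and thirteen are numbers of table rows, i.e.\ classes up to permuting $E_1,E_2,E_3$ (the paper's normalisation $\beta_1\ge\beta_2\ge\beta_3$), whereas orbits of the full $S_3\times\Z/2$ symmetry number $5$ and $9$; and the only conic classes on $X$ are the Cremona-fixed $H-E_i$, since e.g.\ $2H-2E_1-E_2-E_3=L_{12}+L_{13}$ has square $-2$.
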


\begin{table}[ht]
    \small
    \begin{tabular}{@{}lcccc@{}}
        \hline
        $D$ & $-K_X\cdot D$ & $D^{2}$ & $\mu(D)$ & $-(K_X+\epsilon D)$\\
        \hline
        $E_i$                      & $1$  & $-1$ & $1$ & (a)\\
        $H-E_i-E_j$                & $1$  & $-1$ & $1$ & (a)\\
        $H-E_i$                    & $2$  & $0$  & $1$ & (a)\\
        $H$                        & $3$  & $1$  & $1$ & (a)\\
        $2H-E_1-E_2-E_3$           & $3$  & $1$  & $1$ & (a)\\
        $2H-E_i-E_j$               & $4$  & $2$  & $1$ & (a)\\
        $-K_X=3H-E_1-E_2-E_3$      & $6$  & $6$  & $1$ & (a)\\
        \hline
        $2H-E_i$                   & $5$  & $3$  & $2$ & (c)\\
        $3H-2E_i-E_j-E_k$          & $5$  & $3$  & $2$ & (c)\\
        $2H$                       & $6$  & $4$  & $2$ & (c)\\
        $3H-2E_i-E_j$              & $6$  & $4$  & $2$ & (c)\\
        $4H-2E_1-2E_2-2E_3$        & $6$  & $4$  & $2$ & (c)\\
        $3H-E_i-E_j$               & $7$  & $7$  & $2$ & (c)\\
        $4H-2E_i-2E_j-E_k$         & $7$  & $7$  & $2$ & (c)\\
        $4H-2E_i-E_j-E_k$          & $8$  & $10$ & $2$ & (c)\\
        $4H-2E_i-2E_j$             & $8$  & $8$  & $2$ & (d)\\
        $4H-E_1-E_2-E_3$           & $9$  & $13$ & $2$ & (c)\\
        $5H-2E_1-2E_2-2E_3$        & $9$  & $13$ & $2$ & (c)\\
        $5H-2E_i-2E_j-E_k$         & $10$ & $16$ & $2$ & (d)\\
        $-2K_X=6H-2E_1-2E_2-2E_3$  & $12$ & $24$ & $2$ & (d)\\
        \hline
    \end{tabular}
    \medskip
    \caption{Irreducible boundaries on a del Pezzo surface of degree $6$.}
    \label{tab:d6}
\end{table}

\begin{proof}
    Write $D=\alpha H-\beta_1E_1-\beta_2E_2-\beta_3E_3$, so that $D\cdot E_i=\beta_i$ and $D\cdot L_{ij}=\alpha-\beta_i-\beta_j$. By
    Corollary~\ref{cor:mu} we may assume that $\mu(D)\le2$, and it is enough to determine all such classes.
     
    If $D$ is a $(-1)$-curve then $D\in\{E_i,L_{ij}\}$ and we have $E_i\cdot L_{ij}=1$ while all other products of distinct $(-1)$-curves vanish.
    Hence $\mu(D)=1$, which gives the first two rows.
     
    Otherwise $D$ is nef, so $\beta_i\ge0$ and $\alpha\ge\beta_i+\beta_j$ for all $i\neq j$. Reordering the
    $E_i$ we may assume $\beta_1\ge\beta_2\ge\beta_3\ge0$, then
    $\mu(D)=\max\{\beta_1,\ \alpha-\beta_2-\beta_3\}$, and $\mu(D)\le2$ is
    equivalent to $$\beta_1\le2,\qquad \beta_1+\beta_2\ \le\ \alpha\ \le\ \beta_2+\beta_3+2 .$$
    There are ten admissible triples $(\beta_1,\beta_2,\beta_3)$ and at most three
    values of $\alpha$ for each, giving twenty classes: those of the table together
    with $D=0$ and $D=2(H-E_1)$.  The latter contains no irreducible curve by
    Lemma~\ref{lem:irr-members}(3).  Every remaining class is either a conic class
    or a nef class with $D^{2}>0$, hence is represented by an irreducible curve by
    Lemma~\ref{lem:irr-members}.  The columns $-K_X\cdot D$, $D^{2}$ and $\mu(D)$
    are immediate, and the last column follows from Corollary~\ref{cor:mu} together
    with the bigness criterion of Lemma~\ref{lem:criterion}(3), which for $m=2$
    gives $6-(-K_X\cdot D)+\tfrac14D^{2}>0$.
\end{proof}

\begin{remark}\label{rem:d6-geom}
    The seven classes with $\mu(D)=1$ are, geometrically the $(-1)$-curves, the
    fibre classes $H-E_i$ of the three conic bundle structures, the pullbacks of a
    line under the two birational morphisms $X\to\mathbb{P}^{2}$, which are $H$ and
    $2H-E_1-E_2-E_3$, the pullbacks $2H-E_i-E_j$ of a $(1,1)$-class under a
    birational morphism $X\to\mathbb{P}^{1}\times\mathbb{P}^{1}$, and the
    anticanonical class. Both blocks of Table~\ref{tab:d6} are invariant under the
    Weyl group $W(A_2\times A_1)$ of $X$, which acts by permuting $E_1,E_2,E_3$ and
    by the Cremona involution $H\mapsto2H-E_1-E_2-E_3$, $E_i\mapsto H-E_j-E_k$. The
    classes in the second block form nine $W$-orbits.
\end{remark}

\section{Surfaces of degree 7}\label{app:seven}

Let $X$ be a del Pezzo surface of degree $7$, that is the blow up $\beta: X\to\mathbb{P}^{2}$ at two distinct points, with
$K_X=-3H+E_1+E_2$. There are three $(-1)$-curves on $X$, namely $E_1$, $E_2$
and $L:=H-E_1-E_2$. Throughout, $\{i,j\}=\{1,2\}$, $D$ is an irreducible curve
on $X$ and $\epsilon=1-1/m$ with $m\in\mathbb{Z}_{\ge2}$. The abbreviations
\textup{(a)}, \textup{(c)}, \textup{(d)} are those of Appendix~\ref{app:six}.

\begin{proposition}\label{prop:dp7}
    Let $X$ be a del Pezzo surface of degree $7$, let $D\subset X$ be an
    irreducible curve and let $\epsilon=1-1/m$ with $m\ge2$.  If
    $-(K_X+\epsilon D)$ is nef, then the class of $D$ occurs in
    Table~\textup{\ref{tab:d7}}, and the positivity of $-(K_X+\epsilon D)$ is the
    one recorded there.  Conversely every class in the table is represented by an
    irreducible curve.  In particular $(X,\epsilon D)$ is a del Pezzo orbifold
    exactly for the seven classes with $\mu(D)=1$, for every $m\ge2$, and
    $-(K_X+\epsilon D)$ is nef but not ample exactly for the eleven classes with
    $\mu(D)=2$, and then only for $m=2$.
\end{proposition}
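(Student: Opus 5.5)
Following the proof of Proposition~\ref{prop:dp6}, I write $D=\alpha H-\beta_1E_1-\beta_2E_2$, so that $D\cdot E_i=\beta_i$ and $D\cdot L=\alpha-\beta_1-\beta_2$. By Corollary~\ref{cor:mu}, nefness of $-(K_X+\epsilon D)$ forces $\mu(D)\le 2$, where $\mu(D)=\max\{\beta_1,\beta_2,\alpha-\beta_1-\beta_2\}$. So it suffices to enumerate all irreducible classes with $\mu(D)\le2$ and then read off the positivity.

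If $D$ is a $(-1)$-curve, then $D\in\{E_1,E_2,L\}$. We have $E_i\cdot L=1$ and $E_1\cdot E_2=0$, so $\mu(D)=1$. Otherwise $D$ is nef, so $\beta_i\ge0$ and $\alpha\ge\beta_1+\beta_2$. The condition $\mu(D)\le2$ then reads
\[
0\le\beta_1,\beta_2\le2,\qquad \beta_1+\beta_2\le\alpha\le\beta_1+\beta_2+2 .
\]
This gives nine pairs $(\beta_1,\beta_2)$ with three values of $\alpha$ each, i.e.\ twenty-seven candidate classes before symmetry. From these I discard $D=0$ and any class of the form $kF$ with $k\ge2$ and $F$ a conic class. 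On $X$ the conic classes are $H-E_1$ and $H-E_2$, so I discard $2(H-E_i)=2H-2E_i$. I expect a few other classes to drop out because they are non-nef or not irreducible. For example $H-E_1-E_2+\dots$ with $\alpha=\beta_1+\beta_2$ gives multiples of $L$, and $2L$ is not nef.

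Every remaining class is either a conic class or a nef class with $D^2>0$, so by Lemma~\ref{lem:irr-members} it is represented by a smooth irreducible curve. Alternatively one can exhibit explicit plane curves of degree $\alpha$ with multiplicities $\beta_i$ at the two points.

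For the last column, $\mu(D)=1$ gives (a) by Corollary~\ref{cor:mu}(1), and $\mu(D)=2$ gives nef-not-ample exactly for $m=2$ by Corollary~\ref{cor:mu}(2). Bigness for $m=2$ is decided by Lemma~\ref{lem:criterion}(3), namely $7-(-K_X\cdot D)+\tfrac14D^2>0$, which separates (c) from (d).

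The main obstacle is bookkeeping: getting exactly seven classes with $\mu=1$ and eleven with $\mu=2$, counted with the $E_1\leftrightarrow E_2$ symmetry as in the table, and correctly excluding the non-reduced multiples $2(H-E_i)$. I would do this by listing the $(\beta_1,\beta_2,\alpha)$ triples explicitly and computing $D^2$ and $-K_X\cdot D=3\alpha-\beta_1-\beta_2$ for each.
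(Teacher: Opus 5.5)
Your proposal is correct and is essentially the paper's proof. It uses the same coordinates $D=\alpha H-\beta_1E_1-\beta_2E_2$, the reduction to $\mu(D)\le 2$ by Corollary~\ref{cor:mu}, the window $\beta_1+\beta_2\le\alpha\le\beta_1+\beta_2+2$, the discarding of $0$ and $2(H-E_i)$, Lemma~\ref{lem:irr-members} for irreducibility, and Lemma~\ref{lem:criterion}(3) to separate (c) from (d).

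The only slip is your expectation that further classes drop out; none do. Your inequalities are exactly nefness ($D\cdot E_i\ge0$, $D\cdot L\ge0$), so $2L$ never occurs, and the classes with $\alpha=\beta_1+\beta_2$ (e.g.\ $2H-E_1-E_2$) are not multiples of $L$. Moreover $D^2=\alpha^2-\beta_1^2-\beta_2^2\ge2\beta_1\beta_2\ge0$, with equality only for $0$, $H-E_i$ and $2(H-E_i)$. So after removing $0$ and $2(H-E_i)$, the sixteen remaining classes (up to $E_1\leftrightarrow E_2$) plus the two rows of $(-1)$-curves are exactly the $7+11$ rows of the table.
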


\begin{table}[ht]
    \small
    \begin{tabular}{@{}lcccc@{}}
        \hline
        $D$ & $-K_X\cdot D$ & $D^{2}$ & $\mu(D)$ & $-(K_X+\epsilon D)$\\
        \hline
        $E_i$                  & $1$  & $-1$ & $1$ & (a)\\
        $H-E_1-E_2$            & $1$  & $-1$ & $1$ & (a)\\
        $H-E_i$                & $2$  & $0$  & $1$ & (a)\\
        $H$                    & $3$  & $1$  & $1$ & (a)\\
        $2H-E_1-E_2$           & $4$  & $2$  & $1$ & (a)\\
        $2H-E_i$               & $5$  & $3$  & $1$ & (a)\\
        $-K_X=3H-E_1-E_2$      & $7$  & $7$  & $1$ & (a)\\
        \hline
        $2H$                   & $6$  & $4$  & $2$ & (c)\\
        $3H-2E_i-E_j$          & $6$  & $4$  & $2$ & (c)\\
        $3H-2E_i$              & $7$  & $5$  & $2$ & (c)\\
        $3H-E_i$               & $8$  & $8$  & $2$ & (c)\\
        $4H-2E_1-2E_2$         & $8$  & $8$  & $2$ & (c)\\
        $4H-2E_i-E_j$          & $9$  & $11$ & $2$ & (c)\\
        $4H-2E_i$              & $10$ & $12$ & $2$ & (d)\\
        $4H-E_1-E_2$           & $10$ & $14$ & $2$ & (c)\\
        $5H-2E_1-2E_2$         & $11$ & $17$ & $2$ & (c)\\
        $5H-2E_i-E_j$          & $12$ & $20$ & $2$ & (d)\\
        $-2K_X=6H-2E_1-2E_2$   & $14$ & $28$ & $2$ & (d)\\
        \hline
    \end{tabular}
    \medskip
    \caption{Irreducible boundaries on a del Pezzo surface of degree $7$.}
    \label{tab:d7}
\end{table}

\begin{proof}
    Write $D=\alpha H-\beta_1E_1-\beta_2E_2$, so that $D\cdot E_i=\beta_i$ and
    $D\cdot L=\alpha-\beta_1-\beta_2$. Hence
    $\mu(D)=\max\{\beta_1,\beta_2,\alpha-\beta_1-\beta_2\}$ when $D$ is nef. By
    Corollary~\ref{cor:mu} we may assume $\mu(D)\le2$.
     
    If $D$ is a $(-1)$-curve then $D\in\{E_1,E_2,L\}$, since $E_1\cdot E_2=0$ and
    $E_i\cdot L=1$ we get $\mu(D)=1$, which gives the first two rows.  Otherwise
    $D$ is nef, so $\beta_1,\beta_2\ge0$ and
    $\alpha\ge\beta_1+\beta_2$, assuming $\beta_1\ge\beta_2$ after relabelling, the
    condition $\mu(D)\le2$ becomes
    $$\beta_1\le2,\qquad \beta_1+\beta_2\ \le\ \alpha\ \le\ \beta_1+\beta_2+2 .$$
    This leaves six pairs $(\beta_1,\beta_2)$ and three values of $\alpha$ for each,
    that is, eighteen classes: those of the table together with $D=0$ and
    $D=2(H-E_1)$, the latter carrying no irreducible curve by
    Lemma~\ref{lem:irr-members}(3).  All remaining classes are conic classes or nef
    classes with $D^{2}>0$, hence are represented by irreducible curves by
    Lemma~\ref{lem:irr-members}.  The last column follows from
    Corollary~\ref{cor:mu} and from Lemma~\ref{lem:criterion}(3), which for $m=2$
    gives $7-(-K_X\cdot D)+\tfrac14D^{2}>0$.
\end{proof}

\begin{remark}\label{rem:d7-geom}
    The classes with $\mu(D)=1$ are the three $(-1)$-curves, the two conic classes
    $H-E_i$, the pullback $H$ of a line, the pullback $2H-E_1-E_2$ of a
    $(1,1)$-class on $\mathbb{P}^{1}\times\mathbb{P}^{1}$, the pullbacks
    $2H-E_i$ of the class $2H'-E$ on the blow up of $\mathbb{P}^{2}$ at one point,
    and the anticanonical class.
\end{remark}

\section{Surfaces of degree 8}\label{app:eight}

A del Pezzo surface of degree $8$ is either the blow up of $\mathbb{P}^{2}$ at
one point or $\mathbb{P}^{1}\times\mathbb{P}^{1}$. The two cases are treated in
Propositions~\ref{prop:dp8-1} and~\ref{prop:dp8-2}. In both cases, $D$ is an
irreducible curve and $\epsilon=1-1/m$ with $m\in\mathbb{Z}_{\ge2}$.  Since
$\mathrm{Eff}(X)$ now has a generator of anticanonical degree $2$, the
trichotomy of Corollary~\ref{cor:mu} acquires a fourth case, and we use the
abbreviations \textup{(a)}, \textup{(c)}, \textup{(d)} of
Appendix~\ref{app:six} together with
\begin{itemize}
    \item[\textup{(b)}] $-(K_X+\epsilon D)$ is ample if and only if $m=2$, nef if and only if $m\in\{2,3\}$, and big if and only if $m=2$.
\end{itemize}

\subsection{The blow up of \texorpdfstring{$\mathbb{P}^{2}$}{P2} at one point}
 
Here $K_X=-3H+E$, where $H$ is the pullback of the hyperplane class and $E$ the
exceptional curve, and $\mathrm{Eff}(X)$ is generated by $E$ and by the fibre
class $f:=H-E$, with $-K_X\cdot E=1$ and $-K_X\cdot f=2$.
 
\begin{proposition}\label{prop:dp8-1}
    Let $X$ be the blow up of $\mathbb{P}^{2}$ at one point, let $D\subset X$ be an
    irreducible curve and let $\epsilon=1-1/m$ with $m\ge2$.  If
    $-(K_X+\epsilon D)$ is nef, then the class of $D$ occurs in
    Table~\textup{\ref{tab:d8blowup}}, and the positivity of $-(K_X+\epsilon D)$ is
    the one recorded there.  Conversely every class in the table is represented by
    an irreducible curve.  In particular $(X,\epsilon D)$ is a del Pezzo orbifold
    exactly for the six classes marked \textup{(a)}, for every $m\ge2$, and for the
    two classes marked \textup{(b)}, for $m=2$.
\end{proposition}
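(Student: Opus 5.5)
I would follow the same pattern as the proofs of Propositions \ref{prop:dp6} and \ref{prop:dp7}, now with the two extremal rays $E$ and $f=H-E$. First write $D=\alpha H-\beta E$. If $D=E$, then $D\cdot E=-1$ and $D\cdot f=1$. By Lemma~\ref{lem:criterion}(1) the ampleness test reads $\epsilon\cdot(-1)<1$ and $\epsilon\cdot 1<2$, both automatic, so $E$ is of type (a). Otherwise $D$ is irreducible and not $E$, hence nef, which gives $\beta\ge0$ and $\alpha\ge\beta$. We then have $D\cdot E=\beta$ and $D\cdot f=\alpha-\beta$. Lemma~\ref{lem:criterion}(1) says that $-(K_X+\epsilon D)$ is nef (resp.\ ample) if and only if
\[
\epsilon\beta\le 1 \quad\text{and}\quad \epsilon(\alpha-\beta)\le 2
\]
(resp.\ with strict inequalities).

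Since $\epsilon\ge\tfrac12$, nefness forces $\beta\le2$ and $\alpha-\beta\le4$. Moreover $\beta=2$ requires $m=2$, and $\alpha-\beta=4$ also requires $m=2$. The value $\alpha-\beta=3$ requires $\epsilon\le\tfrac23$, that is $m\in\{2,3\}$. For nefness it is then ample if and only if $m=2$. This yields a finite list of pairs $(\alpha,\beta)$ with $0\le\beta\le2$ and $\beta\le\alpha\le\beta+4$, excluding $D=0$.

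\begin{itemize}
\item Type (a) (ample for all $m$) needs $\beta\le1$ and $\alpha-\beta\le2$, i.e.\ $\epsilon\beta<1$ and $\epsilon(\alpha-\beta)<2$ for every $\epsilon<1$. This gives $E$, $f=H-E$, $H$, $2H-E$, $2H$ and $3H-E$, which are the six classes of type (a). (The class $2f$ is removed below.)
\item Type (b) is the case $\alpha-\beta=3$ with $\beta\le1$, giving $3H$ and $4H-E$.
\item The remaining cases ($\beta=2$, or $\alpha-\beta=4$) are nef but not ample, and only for $m=2$. They are sorted into (c) and (d) by the bigness criterion of Lemma~\ref{lem:criterion}(3), which for $m=2$ reads $8-(-K_X\cdot D)+\tfrac14D^2>0$, where $-K_X\cdot D=3\alpha-\beta$ and $D^2=\alpha^2-\beta^2$. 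For (b) the same test at $m=2$ confirms bigness. At $m=3$ the divisor is nef, and one checks it is not big, since non-ample nef at $m=3$ means $\epsilon(\alpha-\beta)=2$.
\end{itemize}

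Next I discard classes carrying no irreducible curve. These are multiples $kf$ with $k\ge2$, so $2f,3f,4f$ drop out by the same reasoning as Lemma~\ref{lem:irr-members}(3): $f$ is the fibre of the ruling $X\to\PP^1$, and a curve with $D\cdot f=0$ lies in a fibre. I also check $\alpha\ge\beta$ with $D\neq E$ against irreducibility. Every remaining class is either $f$, or nef with $D^2>0$ (when $\alpha>\beta$), hence represented by a smooth irreducible curve via Lemma~\ref{lem:irr-members}(1),(2), whose proof applies since $d=8\ge3$. Equivalently, one can exhibit explicit curves: strict transforms of plane curves of degree $\alpha$ with multiplicity $\beta$ at the blown-up point.

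I expect the main obstacle to be bookkeeping rather than ideas. The difficult part is correctly separating the boundary cases $\epsilon(\alpha-\beta)=2$ or $3$ (which rows are (b), (c), (d)) and making sure the $m=3$ nef case for $\alpha-\beta=3$ is recorded precisely as in (b). I would verify each row's bigness numerically and check the counts (six of type (a), two of type (b)) against the final statement.
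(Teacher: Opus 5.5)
Your proposal is correct and follows essentially the same route as the paper: write $D=\alpha H-\beta E$ and test against the two generators $E$ and $f$ (the paper phrases this via $\tau(D)=\min\{1/\beta,\,2/(\alpha-\beta)\}$). You then bound $0\le\beta\le2$ and $0\le\alpha-\beta\le4$, discard $0$ and $2f$, and sort the remaining classes using Lemma~\ref{lem:criterion}(2),(3) and Lemma~\ref{lem:irr-members}. One small slip: $kf=kH-kE$ has $\beta=k$ and $\alpha-\beta=0$, so $3f$ and $4f$ are already excluded by $\beta\le2$, and only $2f$ needs discarding (it lies in the $\beta=2$ block, not among the type (a) candidates).
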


\begin{table}[ht]
    \small
    \begin{tabular}{@{}lccccc@{}}
        \hline
        $D$ & $-K_X\cdot D$ & $D^{2}$ & $D\cdot E$ & $D\cdot f$ & $-(K_X+\epsilon D)$\\
        \hline
        $E$              & $1$  & $-1$ & $-1$ & $1$ & (a)\\
        $f=H-E$          & $2$  & $0$  & $1$  & $0$ & (a)\\
        $H$              & $3$  & $1$  & $0$  & $1$ & (a)\\
        $2H-E$           & $5$  & $3$  & $1$  & $1$ & (a)\\
        $2H$             & $6$  & $4$  & $0$  & $2$ & (a)\\
        $-K_X=3H-E$      & $8$  & $8$  & $1$  & $2$ & (a)\\
        \hline
        $3H$             & $9$  & $9$  & $0$  & $3$ & (b)\\
        $4H-E$           & $11$ & $15$ & $1$  & $3$ & (b)\\
        \hline
        $3H-2E$          & $7$  & $5$  & $2$  & $1$ & (c)\\
        $4H-2E$          & $10$ & $12$ & $2$  & $2$ & (c)\\
        $5H-2E$          & $13$ & $21$ & $2$  & $3$ & (c)\\
        $4H$             & $12$ & $16$ & $0$  & $4$ & (d)\\
        $5H-E$           & $14$ & $24$ & $1$  & $4$ & (d)\\
        $-2K_X=6H-2E$    & $16$ & $32$ & $2$  & $4$ & (d)\\
        \hline
    \end{tabular}
    \medskip
    \caption{Irreducible boundaries on the blow up of $\mathbb{P}^{2}$ at one
    point.}
    \label{tab:d8blowup}
\end{table}

\begin{proof}
    Write $D=\alpha H-\beta E$, so $D\cdot E=\beta$ and $D\cdot f=\alpha-\beta$, by
    Lemma~\ref{lem:criterion}(2),
    $\tau(D)=\min\{1/\beta,\ 2/(\alpha-\beta)\}$, with the convention that a factor
    is omitted when the corresponding intersection number is $\le0$.  Since
    $\epsilon\ge\tfrac12$, nefness forces $\beta\le2$ and $\alpha-\beta\le4$.
     
    If $D$ is a $(-1)$-curve then $D=E$, the first row.  Otherwise $D$ is nef. So $0\le\beta\le2$ and
    $0\le\alpha-\beta\le4$, which leaves 13 nef classes of the table together
    with $D=0$ and $D=2f$. The class $2f$ carries no irreducible curve by
    Lemma~\ref{lem:irr-members}(3), and all others are the fibre class $f$ or nef
    classes with $D^{2}>0$, hence are represented by irreducible curves.  Finally
    $\tau(D) \geq 1$ in the block (a), $\tau(D)=\tfrac23$ in the block (b) and
    $\tau(D)=\tfrac12$ in the blocks (c), (d). The assertions about ampleness and
    nefness follow from Lemma~\ref{lem:criterion}(2), and those about bigness from
    Lemma~\ref{lem:criterion}(3), which gives
    $8-2\epsilon(-K_X\cdot D)+\epsilon^{2}D^{2}>0$.
\end{proof}

\subsection{The quadric surface}
 
Here $X=\mathbb{P}^{1}\times\mathbb{P}^{1}$, $\mathrm{Pic}(X)=\mathbb{Z}H_1\oplus
\mathbb{Z}H_2$ with $H_1^{2}=H_2^{2}=0$ and $H_1\cdot H_2=1$, and
$-K_X=2H_1+2H_2$.  We say that $D$ has \emph{type} $(a,b)$ if
$D\equiv aH_1+bH_2$. Thus $D\cdot H_1=b$, $D\cdot H_2=a$ and
$-K_X\cdot D=2(a+b)$. Both generators $H_1,H_2$ of $\mathrm{Eff}(X)$ have
anticanonical degree $2$, so $\tau(D)=2/\max\{a,b\}$.

\begin{proposition}\label{prop:dp8-2}
    Let $X=\mathbb{P}^{1}\times\mathbb{P}^{1}$, let $D\subset X$ be an irreducible
    curve of type $(a,b)$ with $a\ge b\ge0$, and let $\epsilon=1-1/m$ with $m\ge2$.
    Then $b\ge1$ unless $(a,b)=(1,0)$, and:
    \begin{enumerate}
        \item if $a\le2$, that is $(a,b)\in\{(1,0),(1,1),(2,1),(2,2)\}$, then $-(K_X+\epsilon D)$ is ample for every $m \ge 2$.
        \item if $a=3$, that is $(a,b)\in\{(3,1),(3,2),(3,3)\}$, then $-(K_X+\epsilon D)$ is ample if and only if $m=2$, nef if and only if
        $m\in\{2,3\}$, and big if and only if $m=2$.
        \item if $a=4$, that is $(a,b)\in\{(4,1),(4,2),(4,3),(4,4)\}$, then $-(K_X+\epsilon D)$ is nef if and only if $m=2$, and it is then neither
        ample nor big.
        \item if $a\ge5$, then $-(K_X+\epsilon D)$ is not nef for any $m\ge2$.
    \end{enumerate}
    Every type listed in \textup{(1)-(3)} is represented by a smooth irreducible curve.
\end{proposition}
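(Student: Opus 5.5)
The plan is to argue numerically in $\mathrm{Pic}(X)=\Z H_1\oplus\Z H_2$ and then check irreducibility.

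\emph{Shape of the type.} Since $H_1,H_2$ are nef and $D$ is irreducible, $a,b\ge0$. If $b=0$, then $D\cdot H_1=0$, so $D$ lies in a fibre of the projection whose fibres have class $H_1$. Hence $D$ is that fibre and $(a,b)=(1,0)$. Otherwise $b\ge1$.

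\emph{Positivity.} By Lemma~\ref{lem:criterion}(2) with $\mathcal R(X)=\{H_1,H_2\}$, both of anticanonical degree $2$, we have $\tau(D)=2/a$, since $a=\max\{a,b\}$. Thus $-(K_X+\epsilon D)$ is ample if and only if $\epsilon<2/a$, and nef if and only if $\epsilon\le 2/a$. Recall $\epsilon=1-1/m\in[\tfrac12,1)$.
\begin{itemize}
\item $a\le2$: then $2/a\ge1>\epsilon$, so the divisor is ample for all $m$.
\item $a=3$: then $\epsilon<\tfrac23$ holds exactly for $m=2$, and $\epsilon\le\tfrac23$ exactly for $m\in\{2,3\}$.
\item $a=4$: then $\epsilon\le\tfrac12$ holds exactly for $m=2$, and strict inequality never holds.
\item $a\ge5$: then $2/a<\tfrac12\le\epsilon$, so the divisor is never nef.
\end{itemize}

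\emph{Bigness.} Rather than use the formula, compute directly. We have $-(K_X+\epsilon D)=(2-\epsilon a)H_1+(2-\epsilon b)H_2$, whose self-intersection is $2(2-\epsilon a)(2-\epsilon b)$. A nef divisor is big if and only if this is positive.
\begin{itemize}
\item $a=3$, $m=2$: the coefficients are $\tfrac12$ and $2-b/2>0$, so the divisor is big.
\item $a=3$, $m=3$: $2-\epsilon a=0$, so it is not big.
\item $a=4$, $m=2$: $2-\epsilon a=0$, so it is not big.
\end{itemize}

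\emph{Irreducible representatives.} Finally, exhibit a smooth irreducible curve in each listed type.
\begin{itemize}
\item $(1,0)$ is a fibre.
\item For $a\ge b\ge1$, the class $aH_1+bH_2$ is very ample, since $\OO(a,b)$ with $a,b\ge1$ is very ample (Segre embedding composed with Veronese). Bertini then gives a smooth member, which is connected because $D^2=2ab>0$ and $h^1(\OO_X)=0$; hence it is irreducible. Alternatively, apply Lemma~\ref{lem:irr-members}(2).
\end{itemize}

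The only mildly delicate step is the $b=0$ reduction and the sign conventions for which ruling $H_i$ is; everything else is a direct application of Lemma~\ref{lem:criterion}.
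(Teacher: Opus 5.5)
Your proposal is correct and is essentially the paper's proof. Like the paper, you use $\tau(D)=2/a$ from Lemma~\ref{lem:criterion}(2), read bigness off the self-intersection $2(2-\epsilon a)(2-\epsilon b)$, and use very ampleness of $aH_1+bH_2$ for the representatives; your $D\cdot H_1=0$ argument for $b=0$ is just a rephrasing of the paper's ``sum of fibres'' remark. The one point you (like the paper) leave implicit is that in case (2) the divisor is also not big for $m\ge4$: then $-(K_X+\epsilon D)\cdot H_2=2-3\epsilon<0$ with $H_2$ nef, so it is not even pseudo-effective. Separately, for connectedness of the Bertini member the relevant input is ampleness of $aH_1+bH_2$, not just $D^2>0$.
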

     
\begin{proof}
    An effective divisor of type $(a,0)$ with $a\ge2$ is a sum of $a$ fibres of the
    second projection, hence it is not irreducible. This proves the first statement,
    and conversely $|aH_1+bH_2|$ is very ample for $a,b\ge1$, so its general member
    is smooth and irreducible. By Lemma~\ref{lem:criterion}(2) and
    $\tau(D)=2/\max\{a,b\}=2/a$, the divisor $-(K_X+\epsilon D)$ is ample for
    $\epsilon<2/a$ and nef for $\epsilon\le2/a$, with $\epsilon=1-1/m$ this gives
    (1)-(4), since $\epsilon<2/3$ if and only if $m=2$ and $\epsilon\le2/3$ if and
    only if $m\in\{2,3\}$, while $\epsilon\le\tfrac12$ if and only if $m=2$.
    Finally $-(K_X+\epsilon D)\equiv(2-\epsilon a)H_1+(2-\epsilon b)H_2$ has
    self-intersection $2(2-\epsilon a)(2-\epsilon b)$, which is positive in case (1),
    positive in case (2) exactly for $m=2$, and zero in case (3).
\end{proof}

\bibliographystyle{amsalpha}
\bibliography{ref}

\end{document}